\documentclass[a4paper,12pt,oneside]{amsart} 

\usepackage[english]{babel}
\usepackage[latin9]{inputenc}
\usepackage{amsmath,amsthm,amssymb,amsfonts,amscd,amsbsy,amsxtra}
\usepackage{ucs}
\usepackage{graphicx}
\usepackage{paralist}
\usepackage{hyperref}

\newcommand{\R}{\mathbb{R}}

\newcommand{\N}{\mathbb{N}}
\newcommand{\C}{\mathbb{C}}

\newcommand{\ssubset}{\subset\joinrel\subset}

\usepackage{scalerel}[2014/03/10]
\usepackage[usestackEOL]{stackengine}
\def\avgint{\,\ThisStyle{\ensurestackMath{%
  \stackinset{c}{.2\LMpt}{c}{.5\LMpt}{\SavedStyle-}{\SavedStyle\phantom{\int}}}%
  \setbox0=\hbox{$\SavedStyle\int\,$}\kern-\wd0}\int}

\DeclareMathOperator{\Real}{Re}
\DeclareMathOperator{\dist}{dist}

\theoremstyle{plain}
\newtheorem{theorem}{Theorem}[section]
\newtheorem{proposition}[theorem]{Proposition}
\newtheorem{corollary}[theorem]{Corollary}
\newtheorem{lemma}[theorem]{Lemma}
\theoremstyle{definition}
\newtheorem{example}[theorem]{Example}
\newtheorem{remark}[theorem]{Remark}

\newtheorem{openproblem}[theorem]{Open Problem}

\title[Perron Solutions]{The Perron Solution for Vector-Valued Equations}
 
\author{\sc M. Kreuter}
\address{Marcel Kreuter\\Institute of Applied Analysis\\Ulm University\\89069 Ulm\\Germany}
\email{marcel.kreuter@uni-ulm.de}

\date{\today}

\keywords{Perron solution, vector-valued functions, Banach space, harmonic functions, generalized solution, Dirichlet problem, Poisson problem, Heat equation, irregular domain}
\subjclass[2010]{31B20,31C05,46E40}

\begin{document}
\maketitle

\begin{abstract}
Given a continuous function on the boundary of a bounded open set in $\R^d$ there exists a unique bounded harmonic function, called the \emph{Perron solution}, taking the prescribed boundary values at least at all regular points (in the sense of Wiener) of the boundary. We extend this result to vector-valued functions and consider several methods of constructing the Perron solution which are classical in the real-valued case. We also apply our results to solve elliptic and parabolic boundary value problems of vector-valued functions.
\end{abstract}

\section{Problem Setting, Existence and Uniqueness}\label{X}
Let $\Omega\subset\R^d$ be an open and bounded set and let $X$ be a (real) Banach space. The set of all \emph{harmonic functions} will be denoted by $\mathcal{H}(\Omega,X):=\{u\in C^2(\Omega,X),\Delta u=0\}$. Given a function $f\in C(\partial\Omega,X)$ we consider the classical Dirichlet problem\\
\begin{align*}
\begin{cases}
u\in\mathcal{H}(\Omega,X)\cap C(\overline{\Omega},X)\\
u_{|\partial\Omega}=f.
\end{cases}
\end{align*}
\vspace{0.2cm}\\
A function $u$ satisfying the above is called a \emph{classical solution}. Note that the range of $f$ is separable. Hence we will without loss of generality assume that $X$ is separable, since we may restrict our arguments to the smallest subspace of $X$ containing the range of $f$.\\

In the case $X=\R$, it is well known that a generalized solution, the \emph{Perron solution}, exists. This solution is the unique function which is bounded, harmonic and satisfies the boundary values at least on the set $\partial_\textnormal{reg}\Omega$ of all \emph{regular points} of $\partial\Omega$, c.f. \cite[Chapter 6.6]{ArmitageGardinerPotentialTheory}, \cite[Chapter 2]{HelmsPotentialTheory} or \cite{KeldysDirichletproblem}. The set $\partial_\textnormal{reg}\Omega$ can be described by potential theoretic means, e.g. a point $z\in\partial\Omega$ is regular if and only if $\Omega^c$ is not thin at $z$ which in turn can be described via Wiener's criterion \cite[Theorems 7.5.1 and 7.7.2]{ArmitageGardinerPotentialTheory}. The set of all bounded harmonic functions will be denoted by $\mathcal{H}_b(\Omega,X)$.

\begin{theorem}\label{Perronsolution EnU}
Let $\Omega\subset\R^d$ be open and bounded and let $X$ be a real Banach space. For every $f\in C(\partial\Omega,X)$ there exists a unique function $H_f$, called the \emph{Perron solution}, which satisfies
\begin{align*}
\begin{cases}
H_f\in\mathcal{H}_b(\Omega,X)\\
\lim_{\xi\rightarrow z}H_f(\xi)=f(z)\textnormal{ for all }z\in\partial_\textnormal{reg}\Omega.
\end{cases}
\end{align*}
\end{theorem}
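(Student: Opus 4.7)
The strategy is to reduce to the scalar case by duality. If $H^{(1)}$ and $H^{(2)}$ both satisfy the conclusion, then $u := H^{(1)} - H^{(2)} \in \mathcal{H}_b(\Omega,X)$ satisfies $\lim_{\xi \to z} u(\xi) = 0$ for every $z \in \partial_\textnormal{reg}\Omega$. For any $x^* \in X^*$ the composition $x^* \circ u$ is a bounded real-valued harmonic function on $\Omega$ whose boundary limit vanishes at every regular point, so by the scalar uniqueness recalled above $x^* \circ u \equiv 0$. Since $x^*$ was arbitrary, Hahn--Banach gives $u \equiv 0$.

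\textbf{Existence, finite-dimensional case.} If $f(\partial\Omega)$ lies in a finite-dimensional subspace with basis $e_1,\ldots,e_n$, write $f = \sum_{i=1}^n f_i e_i$ with $f_i \in C(\partial\Omega,\R)$ and set $H_f := \sum_{i=1}^n H_{f_i} e_i$, where $H_{f_i}$ is the scalar Perron solution. Linearity of the scalar Perron operator makes this definition basis-independent, and boundedness, harmonicity, and the regular-boundary behaviour follow componentwise. The scalar maximum principle, transferred to $X$ by Hahn--Banach applied to $H_g - H_h$, yields the contraction estimate $\|H_g - H_h\|_\infty \le \|g - h\|_\infty$ for any finite-range $g,h \in C(\partial\Omega,X)$.

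\textbf{Approximation and passage to the limit.} For general $f \in C(\partial\Omega,X)$ the range $f(\partial\Omega)$ is compact, hence totally bounded in $X$. Given $\varepsilon > 0$, cover it by finitely many $\varepsilon$-balls, take a subordinate continuous partition of unity $\{\varphi_i\}$ on $f(\partial\Omega)$, pick centres $y_i$, and set $f_\varepsilon := \sum_i (\varphi_i \circ f)\, y_i \in C(\partial\Omega,X)$. Then $f_\varepsilon$ has finite-dimensional range and $\|f_\varepsilon - f\|_\infty \le \varepsilon$. The contraction from the previous step makes $(H_{f_{\varepsilon_n}})_n$ Cauchy in $C_b(\Omega,X)$ for any $\varepsilon_n \downarrow 0$; let $H_f$ be its uniform limit. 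The boundary condition at a regular point $z$ then follows from the three-term estimate
\[
\|H_f(\xi) - f(z)\| \le \|H_f - H_{f_\varepsilon}\|_\infty + \|H_{f_\varepsilon}(\xi) - f_\varepsilon(z)\| + \|f_\varepsilon(z) - f(z)\|,
\]
by sending $\xi \to z$ first and $\varepsilon \to 0$ afterwards.

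\textbf{Main obstacle.} The delicate point is showing that $H_f$, obtained only as a uniform limit of vector-valued harmonic functions, actually lies in $\mathcal{H}_b(\Omega,X)$, i.e.\ is $C^2$ with $\Delta H_f = 0$. My plan is to write each $H_{f_{\varepsilon_n}}$ on any ball $B \ssubset \Omega$ as the Bochner Poisson integral of its own trace on $\partial B$; the uniform limit inherits the same Poisson representation on $B$ and thereby the $C^\infty$ regularity and Laplace equation by differentiation under the integral. Equivalently, composing with $x^* \in X^*$ reduces the regularity question to the scalar theorem, and Hahn--Banach then recovers $\Delta H_f = 0$.
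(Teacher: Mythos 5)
Your proposal is correct and follows essentially the same route as the paper: approximate $f$ uniformly by functions with finite-dimensional range, solve those via the scalar Perron operator, use the maximum-principle contraction to pass to a uniform limit, verify the boundary behaviour at regular points by the three-term estimate, and get uniqueness by composing with functionals and Hahn--Banach. The only cosmetic differences are that you build the finite-rank approximants with a partition of unity on the compact range $f(\partial\Omega)$ rather than on $\partial\Omega$ itself, and that you argue the harmonicity of the limit directly via the Poisson integral representation where the paper invokes the cited convergence theorem for bounded nets of vector-valued harmonic functions; both are sound.
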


\begin{remark}
The real-valued Dirichlet problem has a classical solution for every boundary data if and only if $\partial_\textnormal{reg}\Omega=\partial\Omega$. It follows that if the Dirichlet problem has a solution for every real-valued continuous boundary data, then it also has a solution for every $X$-valued continuous boundary data.
\end{remark}

To prove this we will need some auxiliary results. The $(d-1)$-dimensional Hausdorff measure will be denoted by $\sigma_{d-1}$.

\begin{theorem}\cite[Lemma 5.1, Proposition 5.3  \& Theorem 5.4]{ArendtVector-valuedharmonic}\label{Arendt}
\begin{compactenum}[\upshape (i)]
\item\label{Arendt very weak} Let $u:\Omega\rightarrow X$ be locally bounded and suppose that there exists a separating subset $W\subset X'$ such that $\langle u,x'\rangle:=\langle u(\cdot),x'\rangle$ is harmonic for all $x'\in W$. Then $u$ is harmonic.
\item\label{Arendt Poisson} If $u\in\mathcal{H}(\Omega,X)$ then \emph{Poisson's Integral Formula}
\begin{align*}
u(x)={\frac{1}{r_0\sigma_{d-1}(B(0,1))}}\int_{\partial B(x_0,r_0)}{\frac{r_0^2-|x-x_0|^2}{|x-s|^d}u(s)\,d\sigma_{d-1}(s)}
\end{align*}
holds for all $x_0\in\Omega$ and $r_0>0$ such that $B(x_0,r_0)\ssubset\Omega$.
\item\label{Arendt nets} Let $(u_i:\Omega\rightarrow X)_{i\in I}$ be a bounded net of harmonic functions. Suppose that $(u_i(x))_{i\in I}$ converges for each $x\in\Omega$, then $(u_i)_{i\in I}$ converges uniformly on compact subsets of $\Omega$ and the limit is a harmonic function.
\end{compactenum}
\end{theorem}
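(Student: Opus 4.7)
The plan is to treat the three parts in order, each leveraging scalar results applied to $\langle u, x'\rangle$ together with the separating nature of $X'$ (equivalently, $W$).

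For (i), the main task is to bootstrap weak harmonicity on a separating set into smoothness and $\Delta u=0$. First I would argue that $u$ is strongly measurable: since $X$ is separable and $\langle u,x'\rangle$ is continuous (being harmonic) for every $x'\in W$, a weak$^{*}$-approximation argument for $x'\in X'$ by elements of $W$ on the (locally bounded, hence essentially separably valued) range of $u$ upgrades weak measurability on $W$ to weak measurability on all of $X'$, whence $u$ is strongly measurable by Pettis's theorem; local boundedness then yields local Bochner integrability. Next, for each $B(x_0,r)\ssubset\Omega$ and every $x'\in W$, the scalar mean value property for $\langle u,x'\rangle$ combined with the commutation of bounded linear functionals with Bochner integrals gives
\[
\Big\langle u(x_0)-\avgint_{\partial B(x_0,r)}u(s)\,d\sigma_{d-1}(s),\,x'\Big\rangle=0.
\]
Since $W$ separates points, $u$ itself satisfies the spherical mean value property. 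Convolving this identity with a smooth, compactly supported radial mollifier shows $u$ agrees locally with its own mollification, so $u\in C^{\infty}(\Omega,X)$. Applying the separating property once more to $\Delta\langle u,x'\rangle=\langle\Delta u,x'\rangle=0$ for $x'\in W$ yields $\Delta u=0$.

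For (ii), I would apply the classical scalar Poisson integral formula to the harmonic function $\langle u,x'\rangle$ for each $x'\in X'$. Because $u\in\mathcal{H}(\Omega,X)$ is continuous on $\overline{B(x_0,r_0)}$ (by choosing a slightly larger concentric ball still compactly contained in $\Omega$) and the Poisson kernel is continuous and bounded on $\partial B(x_0,r_0)$, the integrand is Bochner integrable and the bounded functional $x'$ passes through the integral. This produces
\[
\big\langle u(x),x'\big\rangle=\Big\langle \frac{1}{r_0\sigma_{d-1}(B(0,1))}\int_{\partial B(x_0,r_0)}\frac{r_0^2-|x-x_0|^2}{|x-s|^d}u(s)\,d\sigma_{d-1}(s),\,x'\Big\rangle
\]
for all $x'\in X'$, and the formula in $X$ follows by Hahn--Banach separation.

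For (iii), let $u$ denote the pointwise limit of $(u_i)$. First I would establish equicontinuity of $(u_i)$ on any compact $K\ssubset\Omega$: differentiating the Poisson representation from (ii) in the variable $x$ and using the uniform bound $\sup_i\|u_i\|_\infty<\infty$ together with the boundedness of the derivatives of the Poisson kernel on $K$ relative to a fixed enclosing ball gives a uniform Lipschitz bound for the net on $K$. Given $\varepsilon>0$ and a finite $\varepsilon$-net $\{x_1,\dots,x_n\}\subset K$, pointwise convergence at each $x_j$ together with the directedness of $I$ produces a common index $i_0$ past which $\|u_i(x_j)-u(x_j)\|<\varepsilon$ for all $j$; equicontinuity then propagates this estimate to all of $K$, yielding uniform convergence. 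Harmonicity of $u$ follows from part (i): $u$ is locally bounded as a uniform limit on compacta, and for each $x'\in X'$ the scalar $\langle u,x'\rangle$ is the bounded pointwise limit of the harmonic functions $\langle u_i,x'\rangle$, hence harmonic by the scalar Vitali--Harnack convergence theorem.

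The principal obstacle is part (i): passing from the weak hypothesis (scalar harmonicity on a separating set) to strong $C^\infty$-regularity and $\Delta u=0$ requires the Pettis measurability framework and a careful mollification argument. Once (i) is in place, (ii) and (iii) reduce to standard scalar results combined with the commutation of linear functionals with Bochner integrals and an equicontinuity argument for nets.
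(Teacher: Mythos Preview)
The paper does not prove this theorem; it is quoted from \cite{ArendtVector-valuedharmonic} (Lemma~5.1, Proposition~5.3, Theorem~5.4) and used as an auxiliary black box, so there is no in-paper argument to compare against.

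Your outline is sound and follows the natural route one would expect in Arendt's paper: reduce to the scalar case via functionals, recover strong statements by Hahn--Banach separation, and use the Poisson kernel for the compactness argument. One correction in part~(i): the parenthetical ``locally bounded, hence essentially separably valued'' is not a valid implication; local boundedness of a map into a Banach space says nothing about separability of its range. Since you are already assuming $X$ separable (as the paper does globally), simply invoke that directly: separability of $X$ makes bounded subsets of $X'$ weak$^{*}$-metrizable, so every $x'\in X'$ is the weak$^{*}$-limit of a \emph{sequence} in $\operatorname{span}(W)$ (the latter being weak$^{*}$-dense because $W$ separates points), and then $\langle u(\cdot),x'\rangle$ is a pointwise limit of a sequence of continuous functions, hence Borel. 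Pettis's theorem and your mollification step then go through as written. Parts~(ii) and~(iii) are correct as stated; in~(iii) you could alternatively bypass the scalar convergence theorem entirely, since once uniform convergence on compacta is established the mean value property passes to the limit directly.
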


If $K\subseteq X$ then $\overline{\textnormal{conv}}(K)$ will denote the \emph{closed convex hull} of $K$. Note that if $K$ is compact, then so is $\overline{\textnormal{conv}}(K)$, \cite[Theorem 5.35]{AlipranitsBorderInfinitedimensional}.

\begin{proposition}[Maximum Principle for vector-valued functions]\label{weakmax}
Let $u\in\mathcal{H}(\Omega,X)\cap C(\overline{\Omega},X)$. Then for all $\xi\in\Omega$ we have
\begin{compactenum}[\upshape (i)]
\item $u(\xi)\in \overline{\textnormal{conv}}(u(\partial\Omega))$
\item $\|u(\xi)\|\leq\max_{z\in\partial\Omega}\|u(z)\|$.
\end{compactenum}
\end{proposition}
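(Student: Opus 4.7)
The plan is to reduce everything to the classical scalar maximum principle via duality. The key observation is that for every continuous linear functional $x'\in X'$, the composition $v:=\langle u(\cdot),x'\rangle$ lies in $\mathcal{H}(\Omega)\cap C(\overline{\Omega})$ (harmonicity of $v$ is immediate since $x'$ commutes with partial derivatives, or alternatively can be read off from Theorem \ref{Arendt}\eqref{Arendt Poisson}), so it obeys the familiar real-valued maximum principle $\sup_{\Omega} v\leq\max_{\partial\Omega}v$.

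To prove (i), I would argue by contradiction. Fix $\xi_{0}\in\Omega$ and suppose that $u(\xi_{0})\notin K:=\overline{\textnormal{conv}}(u(\partial\Omega))$. Since $K$ is closed and convex and $\{u(\xi_{0})\}$ is compact and disjoint from $K$, the geometric Hahn--Banach theorem supplies $x'\in X'$ and $\alpha\in\R$ with $\langle u(\xi_{0}),x'\rangle>\alpha$ and $\langle y,x'\rangle\leq\alpha$ for every $y\in K$. In particular $\langle u(z),x'\rangle\leq\alpha$ for every $z\in\partial\Omega$. Applying the scalar maximum principle to the harmonic function $\langle u,x'\rangle\in C(\overline{\Omega})$ then gives $\langle u(\xi_{0}),x'\rangle\leq\max_{\partial\Omega}\langle u,x'\rangle\leq\alpha$, contradicting the choice of $x'$.

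Part (ii) follows at once from (i): with $M:=\max_{z\in\partial\Omega}\|u(z)\|$, the closed ball $\{y\in X:\|y\|\leq M\}$ is closed and convex and contains $u(\partial\Omega)$, so it contains $K$, and in particular $u(\xi)\in K$ satisfies $\|u(\xi)\|\leq M$.

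I do not anticipate any real obstacle; the entire content is the passage from scalars to vectors, which is effected by a single application of Hahn--Banach. It is worth noting that the compactness of $K$ guaranteed by the cited convex-hull theorem is convenient but not strictly necessary here: mere closedness and convexity of $K$, combined with compactness of the singleton $\{u(\xi_{0})\}$, already suffice for strict separation by a continuous linear functional.
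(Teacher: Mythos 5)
Your proof is correct and follows exactly the paper's route: deduce (ii) from (i), and prove (i) by separating $u(\xi_0)$ from the closed convex hull with a Hahn--Banach functional and invoking the scalar maximum principle. You merely spell out the separation step and the ball argument for (ii) in more detail than the paper does.
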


\begin{proof}
It suffices to show $(i)$. Suppose that $u(x)\notin\overline{\textnormal{conv}}(u(\partial\Omega))=:M$, then by the Hahn-Banach Theorem there exists a functional $x'\in X'$ such that $\Real{\langle u(x),x'\rangle}>\sup_{m\in M}\Real{\langle m,x'\rangle}$. This contradicts the Maximum Principle for real-valued functions.
\end{proof}

\begin{remark}\label{weak remark}
The proof of Proposition \ref{weakmax} shows that it is actually enough to have $u\in \mathcal{H}(\Omega,X)\cap C_b(\Omega,X)$ such that $u(\xi)\rightharpoonup f(z)$ as $\xi\rightarrow z\in\partial\Omega$.
\end{remark}

The last tool we will need for the proof is the following density result.

\begin{lemma}\label{tensor}
Let $K$ be a compact space and $X$ be a Banach space. If $W\subset C(K,\R)$ and $Y\subset X$ are dense in the respective spaces, then the set
\begin{align*}
\left\{f\in C(K,X),f=\sum_{j=1}^{n}{f_j\otimes x_j},f_j\in W, x_j\in Y\right\}
\end{align*}
is dense in $C(K,X)$.
\end{lemma}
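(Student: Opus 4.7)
The plan is two-step: first approximate $f\in C(K,X)$ by a finite sum $\sum_j \phi_j\otimes y_j$ where the coefficients $y_j$ already lie in $Y$ but the functions $\phi_j$ are only in $C(K,\R)$, and then replace each $\phi_j$ by an element of $W$ using density.

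For the first step, fix $\varepsilon>0$. Since $K$ is compact, $f(K)$ is a compact subset of $X$. Because $Y$ is dense in $X$, we can cover $f(K)$ by finitely many balls $B(y_1,\varepsilon),\dots,B(y_n,\varepsilon)$ with $y_j\in Y$. The preimages $U_j:=f^{-1}(B(y_j,\varepsilon))$ form an open cover of $K$, and since $K$ is compact Hausdorff (hence normal) there is a partition of unity $(\phi_j)_{j=1}^n\subset C(K,\R)$ subordinate to $(U_j)$, i.e.\ $\operatorname{supp}\phi_j\subset U_j$, $0\le\phi_j\le1$ and $\sum_{j=1}^n\phi_j\equiv 1$ on $K$. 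For every $k\in K$, using that $\phi_j(k)\ne 0$ forces $k\in U_j$, hence $\|f(k)-y_j\|<\varepsilon$, we get
\begin{align*}
\Bigl\|f(k)-\sum_{j=1}^n\phi_j(k)\,y_j\Bigr\|\le\sum_{j=1}^n\phi_j(k)\|f(k)-y_j\|\le\varepsilon.
\end{align*}

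For the second step, set $M:=1+\max_{1\le j\le n}\|y_j\|$ and choose, by density of $W$ in $C(K,\R)$, functions $\tilde\phi_j\in W$ with $\|\phi_j-\tilde\phi_j\|_\infty<\varepsilon/(nM)$. Then
\begin{align*}
\Bigl\|\sum_{j=1}^n\phi_j\otimes y_j-\sum_{j=1}^n\tilde\phi_j\otimes y_j\Bigr\|_\infty\le\sum_{j=1}^n\|\phi_j-\tilde\phi_j\|_\infty\|y_j\|<\varepsilon,
\end{align*}
so the triangle inequality gives $\|f-\sum_{j=1}^n\tilde\phi_j\otimes y_j\|_\infty<2\varepsilon$, proving density.

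The only non-routine ingredient is the existence of the subordinate partition of unity in the first step, which requires $K$ to be compact Hausdorff (this is the implicit meaning of ``compact space'' here); everything else is a straightforward $\varepsilon$-chase. An alternative route is to first invoke the classical fact $C(K,X)\cong C(K,\R)\widehat\otimes_\varepsilon X$ to get density of $C(K,\R)\otimes X$ in $C(K,X)$, and then approximate coefficients separately, but the direct partition-of-unity argument is shorter and self-contained.
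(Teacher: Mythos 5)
Your proof is correct and follows essentially the same route as the paper: a partition of unity subordinate to a finite open cover of $K$ on which $f$ has small oscillation, followed by perturbing the scalar functions into $W$ and the vectors into $Y$. The only (cosmetic) difference is that you generate the cover from preimages of $\varepsilon$-balls centred at points of $Y$, whereas the paper covers $K$ first and then approximates the values $f(k_i)$ by elements of $Y$.
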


\begin{proof}
Let $f\in C(K,X)$ and $\varepsilon>0$. For every $k\in K$ there exists an open set $U\ni k$ such that $\|f(k)-f(l)\|\leq\varepsilon$ for all $l\in U$. Since $K$ is compact we may choose a finite number of $k_i\in K$ and $U_i$ as above such that $\bigcup_{i=1}^{n}U_i=K$. Let $(\varphi_i)_{i=1}^{n}$ be a partition of unity subordinate to the collection $(U_i)_{i=1}^{n}$. Further choose $\psi_i\in W$ such that $\|\varphi_i-\psi_i\|_\infty<\varepsilon n^{-1}\|f\|_\infty^{-1}$, $x_i\in Y$ such that $\|f(k_i)-x_i\|<\varepsilon$ and define $g:=\sum_{i=1}^{n}{\psi_i\otimes x_i}$. One easily computes that $\|f-g\|<3\varepsilon$.
\end{proof}

\begin{proof}[Proof of Theorem \ref{Perronsolution EnU}]
First suppose that $f$ is of the form $f=\sum_{n=1}^{N}{g_n\otimes x_n}$ where $g_n\in C(\partial\Omega,\R)$ and $x_n\in X$. The function $H_f:=\sum_{n=1}^{N}{H_{g_n}\otimes x_n}$ satisfies the claim. For an arbitrary function $f\in C(\partial\Omega,X)$ there exists a sequence of functions $f_n$ of the above form which converges to $f$. By the maximum principle the Perron solutions $H_{f_n}$ form a Cauchy sequence in $C_b(\Omega,X)$ and hence converge uniformly on compact sets to a harmonic function $H_f$. Let $z\in\partial\Omega$ be regular, then
\begin{align*}
&\|H_f(x)-f(z)\|\\
\leq&\|H_f-H_{f_n}\|_{\infty}+\|H_{f_n}(x)-f_n(z)\|+\|f_n(z)-f(z)\|.
\end{align*}
For large $n\in\N$ the first and last term will be smaller than a given $\varepsilon>0$, thus
\begin{align*}
\limsup_{x\rightarrow z}\|H_f(x)-f(z)\|\leq 2\varepsilon+\lim_{x\rightarrow z}\|H_{f_n}(x)-f_n(z)\|=2\varepsilon.
\end{align*}
Letting $\varepsilon\rightarrow0$ yields existence. By composing $H_f$ with an arbitrary functional $x'\in X'$ the uniqueness of $H_f$ follows from the uniqueness of the real-valued Perron solution and the Hahn-Banach Theorem.
\end{proof}

The Dirichlet problem is one of the oldest problems in partial differential equations and various different constructions of the Perron solution have been given. For the remainder of this section and during the next section we want to describe how several of these constructions work in the vector-valued case as well.

\begin{corollary}[Wiener's construction of the Perron solution]
Let $f\in C(\partial\Omega,X)$. For any continuous extension $F$ of $f$ to the whole of $\overline{\Omega}$ and every sequence $(\omega_n)$ of Dirichlet regular open subsets of $\Omega$ such that $\omega_n\ssubset\omega_{n+1}$ and $\bigcup_n\omega_n=\Omega$ we have: If $H_n$ denotes the solution of the Dirichlet problem for $\omega_n$ with boundary data $F_{|\partial\omega_n}$, then $H_n$ converges to $H_f$ uniformly on compact subsets.
\end{corollary}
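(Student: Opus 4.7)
The plan is to mimic the density argument from the proof of Theorem \ref{Perronsolution EnU}: first establish the claim for extensions $F$ of tensor--product form, where it reduces to the classical real-valued Wiener construction, and then pass to general $F$ by approximation and the vector-valued maximum principle.

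For the tensor--product case I assume $F=\sum_{j=1}^{N}G_j\otimes x_j$ with $G_j\in C(\overline\Omega,\R)$ and $x_j\in X$. Since each $\omega_n$ is Dirichlet regular, linearity and uniqueness of classical solutions give $H_n=\sum_{j=1}^{N}H_{n,j}\otimes x_j$, where $H_{n,j}$ is the real-valued classical solution on $\omega_n$ with boundary data $G_j|_{\partial\omega_n}$. By the construction used in the proof of Theorem \ref{Perronsolution EnU}, the corresponding Perron solution is $H_f=\sum_{j=1}^{N}H_{G_j|_{\partial\Omega}}\otimes x_j$, so the claim in this case follows immediately from the real-valued Wiener construction applied componentwise.

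For an arbitrary $F$, Lemma \ref{tensor} (with $K=\overline\Omega$) provides extensions $F^{(k)}$ of tensor--product form with $\|F^{(k)}-F\|_\infty\to 0$. Write $H_n^{(k)}$ for the classical solution on $\omega_n$ with data $F^{(k)}|_{\partial\omega_n}$ and $H_{f^{(k)}}$ for the Perron solution on $\Omega$ for $f^{(k)}:=F^{(k)}|_{\partial\Omega}$. Fix a compact $K\subset\Omega$ and $n_0$ with $K\subset\omega_{n_0}$; for $n\geq n_0$ Proposition \ref{weakmax} applied to $H_n-H_n^{(k)}$ on $\omega_n$ yields
\begin{align*}
\sup_{x\in K}\|H_n(x)-H_n^{(k)}(x)\|\leq \|F-F^{(k)}\|_{\infty}.
\end{align*}
The parallel bound $\|H_f-H_{f^{(k)}}\|_\infty\leq \|f-f^{(k)}\|_\infty$ will follow from the identity $\langle H_g,x'\rangle=H_{\langle g,x'\rangle}$ (a consequence of uniqueness in Theorem \ref{Perronsolution EnU}), together with the real-valued maximum principle for the Perron solution and Hahn--Banach. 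Given $\varepsilon>0$, I choose first $k$ with $\|F-F^{(k)}\|_\infty<\varepsilon$ and then $n$ so large that $\sup_K\|H_n^{(k)}-H_{f^{(k)}}\|<\varepsilon$ via the tensor--product case; the triangle inequality then produces $\sup_K\|H_n-H_f\|<3\varepsilon$.

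I do not expect any genuinely analytic obstacle: the only new ingredient beyond the real-valued Wiener theorem is the vector-valued maximum principle of Proposition \ref{weakmax}. The main subtlety is bookkeeping, since $H_n$ is only defined on $\omega_n$ and all estimates must therefore be taken on compacta $K\ssubset\omega_n$ for $n$ large enough.
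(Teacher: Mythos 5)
Your proof is correct, but it takes a genuinely different route from the paper. The paper argues by compactness: the vector-valued maximum principle puts all the $H_n$ into the compact set $\overline{\textnormal{conv}}(F(\overline{\Omega}))$, Poisson's integral formula gives equicontinuity, Arzela--Ascoli then extracts locally uniform subsequential limits, and each such limit is identified with $H_f$ by testing against functionals $x'$ and invoking the real-valued Wiener theorem together with uniqueness. You instead rerun the density scheme of Theorem \ref{Perronsolution EnU}: settle the tensor-product case componentwise via the real-valued Wiener theorem, and then transfer to general $F$ by a $3\varepsilon$ argument in which the maximum principle on $\omega_n$ controls $\sup_{\omega_n}\|H_n-H_n^{(k)}\|$ by $\|F-F^{(k)}\|_\infty$ uniformly in $n$, and the analogous contraction bound controls $\|H_f-H_{f^{(k)}}\|_\infty$. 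Both arguments are sound; yours is more quantitative and elementary (no Arzela--Ascoli, no compactness of the closed convex hull), at the cost of re-doing the bookkeeping of the approximation argument, whereas the paper's subsequence argument needs no error estimates at all and identifies the limit purely by weak uniqueness. The one point worth stating explicitly in your write-up is that in the tensor case the classical solution on $\omega_n$ really is $\sum_j H_{n,j}\otimes x_j$ (which you justify correctly by regularity of $\omega_n$ and uniqueness), and that the bound $\|H_f-H_{f^{(k)}}\|_\infty\le\|f-f^{(k)}\|_\infty$ follows by composing with normed functionals and using $\langle H_g,x'\rangle=H_{\langle g,x'\rangle}$; both are routine and you have indicated them.
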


\begin{proof}
The result is well known in the real-valued case, see e.g. \cite[Theorems I \& II]{KeldysDirichletproblem}. By the Maximum Principle we have that $H_n(x)\in \overline{\textnormal{conv}}(F(\partial\omega_n))\subseteq\overline{\textnormal{conv}}(F(\Omega))$. Hence $H_n$ is uniformly bounded by a constant independent of $n$. Poisson's Integral Formula implies that $(H_n)_{n\in\N}$ is equicontinuous. Further $\overline{\textnormal{conv}}(F(\Omega))$ is compact. The Arzela-Ascoli Theorem implies that each subsequence of $(H_n)_{n\in\N}$ has a subsequence $(H_{n_k})_{k\in\N}$ converging to some function $v$ on compact subsets. The real-valued case implies that $\langle H_{n_k},x'\rangle \rightarrow H_{\langle f,x'\rangle}$ and hence $v=H_f$. As the subsequence was chosen arbitrarily the claim follows.
\end{proof}

Wiener's construction is justified since an extension $F$ and an exhausting sequence of regular sets always exist, c.f. \cite[Theorem 4.1]{DugundjiVectorvaluedTietze} and \cite[Corollary 6.6.13]{ArmitageGardinerPotentialTheory}. The next construction is due to Poincar\'e and can be found in \cite[Theorem 2]{HildebrandtOnDirichletsPrinciple}. We will omit the proof since it works similar to the proof of Theorem \ref{Perronsolution EnU}.

\begin{proposition}[Poincar\'e's construction of the Perron solution]\label{poincare construction}
Let $f\in C(\partial\Omega,X)$ and let $F$ be a continuous extension of $f$ to the whole of $\overline{\Omega}$. Let $B_i$ be a sequence of non-trivial balls such that $B_i\ssubset\Omega$ and $\bigcup_i B_i=\Omega$ and let $i_n$ be a sequence such that every natural number appears infinitely often in $i_n$. For a continuous function $u$ and a ball $B$ in $\Omega$ we let $u_B$ be the function defined by $u$ outside of $B$ and the solution to the Dirichlet problem in $B$ with boundary data $u_{|\partial B}$ inside $B$. Define a sequence $u_n$ inductively via
\begin{align*}
u_0=F\textnormal{ and }u_n=(u_{n-1})_{B_{i_n}}.
\end{align*}
Then the sequence $u_n$ converges to $H_f$ uniformly on compact subsets.
\end{proposition}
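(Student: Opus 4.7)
The plan is to mirror the strategy used for Theorem \ref{Perronsolution EnU}: first settle the case of tensor-form boundary data using linearity of the iteration, then pass to general $f$ by a density-plus-stability argument based on the vector-valued maximum principle.

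For the special case $f = \sum_{k=1}^{N} g_k \otimes x_k$ with $g_k \in C(\partial\Omega, \R)$ and $x_k \in X$, I would pick scalar extensions $G_k \in C(\overline{\Omega}, \R)$ of $g_k$ and set $F = \sum_{k=1}^{N} G_k \otimes x_k$. The crucial observation is that the operation $u \mapsto u_B$ is linear: outside $B$ it is the identity, and inside $B$ it is the Poisson integral of $u_{|\partial B}$, which is linear in $u$ by Theorem \ref{Arendt}(\ref{Arendt Poisson}). By induction the iterates therefore decompose as $u_n = \sum_{k=1}^{N} u_n^{(k)} \otimes x_k$, where each $u_n^{(k)}$ is the scalar Poincar\'e iteration starting from $G_k$; the real-valued version \cite[Theorem 2]{HildebrandtOnDirichletsPrinciple} yields $u_n^{(k)} \to H_{g_k}$ uniformly on compact subsets of $\Omega$, and summing gives $u_n \to H_f$ uniformly on compact subsets.

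To pass to general $f$ I would use a contraction estimate for the iteration. If $F, F' \in C(\overline{\Omega},X)$ are two starting data with associated iterates $u_n, u_n'$, then Proposition \ref{weakmax} applied to the harmonic extension inside each ball $B_{i_n}$ shows that $u \mapsto u_B$ is sup-norm non-expansive, and therefore
\begin{equation*}
\sup_{\overline{\Omega}} \|u_n - u_n'\| \le \sup_{\overline{\Omega}} \|u_{n-1} - u_{n-1}'\| \le \cdots \le \sup_{\overline{\Omega}} \|F - F'\|,
\end{equation*}
while the analogous inequality $\|H_f - H_{f'}\|_\infty \le \|f - f'\|_\infty$ for the Perron solutions follows from Remark \ref{weak remark}. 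Lemma \ref{tensor} produces tensor-form $f^{(m)} \to f$ uniformly on $\partial\Omega$, and an extension theorem (e.g.\ Dugundji's) provides continuous extensions $F^{(m)} \to F$ uniformly on $\overline{\Omega}$. Writing $u_n^{(m)}$ for the Poincar\'e iterates associated with $F^{(m)}$, for a compact $K \subset \Omega$ and $\varepsilon > 0$ I would first fix $m$ large so that $\|u_n - u_n^{(m)}\|_\infty < \varepsilon$ uniformly in $n$ and $\|H_{f^{(m)}} - H_f\|_\infty < \varepsilon$; then, by the first step, pick $n_0$ with $\sup_K \|u_n^{(m)} - H_{f^{(m)}}\| < \varepsilon$ for $n \ge n_0$. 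A triangle inequality closes the argument.

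The only genuine obstacle is the contraction estimate above: it rests on the linearity of $u \mapsto u_B$ and the vector-valued maximum principle, both of which are already available, so after recording it the argument is essentially bookkeeping parallel to the proof of Theorem \ref{Perronsolution EnU}.
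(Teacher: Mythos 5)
Your overall strategy is exactly the one the paper intends (it omits the proof on the grounds that it ``works similar to the proof of Theorem \ref{Perronsolution EnU}''): settle tensor-form data by linearity of $u\mapsto u_B$ together with the real-valued result of Hildebrandt, then pass to general data by density and the sup-norm non-expansiveness of the iteration, which you correctly derive from Proposition \ref{weakmax} applied inside each ball. The tensor-form step and the contraction estimate are both sound, and the contraction estimate is the right extra ingredient that replaces the direct Cauchy-sequence argument of Theorem \ref{Perronsolution EnU}.

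The one step that does not work as written is the approximation scheme in the second part. Your contraction estimate controls $\sup_{\overline{\Omega}}\|u_n-u_n^{(m)}\|$ by $\sup_{\overline{\Omega}}\|F-F^{(m)}\|$, so the approximating \emph{starting data} $F^{(m)}$ must simultaneously (a) be of tensor form on $\overline{\Omega}$, so that the first step applies to them, and (b) converge to the \emph{given, arbitrary} extension $F$ uniformly on $\overline{\Omega}$. Producing tensor-form $f^{(m)}\to f$ on $\partial\Omega$ and then invoking an extension theorem achieves neither simultaneously: a Dugundji extension of $f^{(m)}$ converges (at best, using the linear extension operator) to the Dugundji extension of $f$, not to the prescribed $F$, and a generic continuous extension of a tensor-form boundary function need not itself be of tensor form. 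The fix is immediate: apply Lemma \ref{tensor} with $K=\overline{\Omega}$ directly to $F$, obtaining tensor-form $F^{(m)}\to F$ uniformly on $\overline{\Omega}$, and set $f^{(m)}:=F^{(m)}_{|\partial\Omega}$, so that $f^{(m)}\to f$ uniformly as well; your triangle inequality then closes the argument. A minor related point: the bound $\|H_f-H_{f^{(m)}}\|_\infty\le\|f-f^{(m)}\|_\infty$ is better justified via the tensor case and the real-valued maximum principle (or via harmonic measures) than via Remark \ref{weak remark}, since $H_{f-f^{(m)}}$ attains its boundary values only at the regular points of $\partial\Omega$.
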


Another possible way of constructing the Perron solution is via \emph{harmonic measures}. We summarize the construction of these measures and refer to \cite[Chapter 6.4]{ArmitageGardinerPotentialTheory} for more information. For every $\xi\in\Omega$ the mapping
\begin{align*}
C(\partial\Omega,\R)&\rightarrow\R\\
f&\mapsto H_f(\xi)
\end{align*}
is well-defined, linear and positive. It follows from the Riesz Representation Theorem that there exists a unique probability measure $\mu_\xi$ on the Borel algebra $\mathcal{B}(\partial\Omega)$ such that
\begin{align}\label{rieszrep}
H_f(\xi)=\int_{\partial\Omega}{f\,d\mu_\xi}.
\end{align}

\begin{corollary}[Construction via harmonic measures]
Let $f\in C(\partial\Omega,X)$. The function $H_f$ can be defined via \eqref{rieszrep}.
\end{corollary}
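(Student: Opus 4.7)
The plan is to verify two things: that the integral $\int_{\partial\Omega} f\,d\mu_\xi$ makes sense as a vector-valued (Bochner) integral for every $\xi\in\Omega$, and that this integral agrees pointwise with $H_f(\xi)$ from Theorem \ref{Perronsolution EnU}.

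First I would establish integrability. Since $\Omega$ is bounded, $\partial\Omega$ is compact, so $f(\partial\Omega)$ is compact in $X$ and in particular separable; continuity of $f$ then yields strong measurability. Because $\|f\|$ is bounded on $\partial\Omega$ and $\mu_\xi$ is a probability measure, $f$ is Bochner integrable with respect to $\mu_\xi$, and the integral $G(\xi):=\int_{\partial\Omega}f\,d\mu_\xi$ is a well-defined element of $\overline{\textnormal{conv}}(f(\partial\Omega))\subset X$.

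The key identification then proceeds via duality. For any $x'\in X'$, a bounded linear functional commutes with the Bochner integral, so
\begin{align*}
\langle G(\xi),x'\rangle=\int_{\partial\Omega}\langle f(z),x'\rangle\,d\mu_\xi(z)=H_{\langle f,x'\rangle}(\xi),
\end{align*}
where the last equality is the defining property \eqref{rieszrep} of the real harmonic measure applied to the scalar boundary datum $\langle f,x'\rangle\in C(\partial\Omega,\R)$. On the other hand, $\langle H_f(\cdot),x'\rangle$ is a bounded harmonic real-valued function on $\Omega$ which, at every regular boundary point $z$, satisfies $\lim_{\xi\to z}\langle H_f(\xi),x'\rangle=\langle f(z),x'\rangle$; by the uniqueness part of Theorem \ref{Perronsolution EnU} in the scalar case, this forces $\langle H_f,x'\rangle=H_{\langle f,x'\rangle}$. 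Comparing gives $\langle G(\xi),x'\rangle=\langle H_f(\xi),x'\rangle$ for every $x'\in X'$, whence $G(\xi)=H_f(\xi)$ by Hahn-Banach.

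I do not expect any serious obstacle: the argument is essentially the same reduction to the scalar case that was used at the end of the proof of Theorem \ref{Perronsolution EnU}. The only point requiring a bit of care is Bochner integrability, which is immediate from compactness of $\partial\Omega$ together with the separability reduction performed at the start of Section \ref{X}. As a consistency check, one can alternatively verify the formula on finite sums $f=\sum g_n\otimes x_n$, where linearity of both sides reduces everything to the scalar identity $H_{g_n}(\xi)=\int g_n\,d\mu_\xi$, and then extend by Lemma \ref{tensor} using the uniform continuity of $f\mapsto H_f$ supplied by Proposition \ref{weakmax}.
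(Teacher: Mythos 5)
Your argument is correct and is essentially the paper's own proof: the paper likewise observes that $x'$ commutes with integration and invokes the uniqueness of the Perron solution, merely leaving the Bochner integrability and the identity $\langle H_f,x'\rangle=H_{\langle f,x'\rangle}$ implicit. Your write-up just makes those routine steps explicit.
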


\begin{proof}
Let $x'\in X'$. Since $x'$ commutes with integration the claim holds due to the uniqueness of the Perron solution.
\end{proof}

\begin{remark}
Another way to construct the Perron solution is to use Hilbert space theory in the space $H^1(\Omega,\R)$. This construction can be found in \cite[Theorem 1]{HildebrandtOnDirichletsPrinciple}, \cite[II,\S7, 4.]{DautrayLionsMathematicalAnalysis} or \cite{ArendtDanersDirichletProblem}. Obviously, this approach fails if $X$ is not a Hilbert space. However, if $X$ happens to be a Hilbert space, the Perron solution can be constructed via Hilbert space methods as in the real-valued case with minor changes and we will omit to carry out the proof.
\end{remark}

\section{Perron's Method on Banach Lattices}
The classical method to obtain the Perron solution is to construct it as the pointwise supremum of subsolutions, see e.g. c.f. \cite[Chapter 6.6]{ArmitageGardinerPotentialTheory}, \cite[Chapter 2]{HelmsPotentialTheory} or \cite[Theorem II]{KeldysDirichletproblem}. This crucially depends on the order of $\R$ and hence makes no sense in general Banach spaces. Thus throughout this section let $X$ be a Banach lattice. We will use the partial ordering on $X$ to generalize Perron's approach.\\

As in the real-valued case a function $v\in C(\Omega,X)$ is called \emph{subharmonic} if for all $\xi\in\Omega$ there exists $R>0$ such that $B(\xi,R)\ssubset\Omega$ and we have
\begin{align*}
v(\xi)\leq\avgint_{\partial B(\xi,r)}{v\,d\sigma_{d-1}}
\end{align*}
for all $0<r<R$. Testing with positive functionals and using the real-valued case \cite[Chapter 11, Exercise 5]{AxlerBourdonRamneyHarmonicFunction} reveals that this inequality actually holds for all $0<r<\dist(\xi,\partial\Omega)$. Let $f\in C(\partial\Omega,X)$. Then a \emph{continuous subsolution} of the Dirichlet problem with boundary data $f$ is a function $v\in C(\overline{\Omega},X)$ that is subharmonic and satisfies $v(z)\leq f(z)$ for all $z\in\partial\Omega$. Analogously one defines superharmonic functions and continuous supersolutions. The sets of continuous sub-\slash supersolutions will be denoted by $\mathcal{CS}^-_f$ and $\mathcal{CS}^+_f$ respectively. 

\begin{remark}
The above definition is common in books on partial differential equations, such as \cite{GilbargTrudingerEllipticpde} and \cite{AxlerBourdonRamneyHarmonicFunction}, where they are just called \glqq subsolutions\grqq. In potential theory subsolutions are defined in a weaker sense, see , \cite{ArmitageGardinerPotentialTheory} or \cite{HelmsPotentialTheory}. For this reason we chose the name \glqq continuous subsolution\grqq.
\end{remark}

If $X=\R$ then $f\in C(\partial\Omega,\R)$ is automatically bounded and hence $v_-\equiv\min{f}$ and $v_+\equiv\max{f}$ are continuous sub-\slash supersolutions. In an arbitrary Banach lattice, $v_-$ and $v_+$ can only exist if $f$ is \emph{bounded in order}, i.e. there exist $s,S\in X$ such that $s\leq f(\xi)\leq S$. The set of all order bounded and continuous functions will be denoted by $C_{\textnormal{ob}}(\partial\Omega,X)$. We give a counterexample.
\begin{example}
Let $(z_n)_{n\in\N}$ be a pairwise distinct sequence of points in $\partial\Omega$ converging to $z$. Set $f(z_n):=\frac{e_n}{n}\in\ell^1$ and $f(z):=0$, then $f$ is continuous. Using Dugundi's Extension Theorem \cite[Theorem 4.1]{DugundjiVectorvaluedTietze} we may extend $f$ to a function $f\in C(\partial\Omega,\ell^1)$ that is not bounded from above in the order of $\ell^1$. Hence there is no guarantee that $\mathcal{CS}_f^+\neq\emptyset$.
\end{example}
For $f\in C_{\textnormal{ob}}(\partial\Omega,X)$ with lower bound $s$ and upper bound $S$ the sets $\mathcal{CS}_f^-$ and $\mathcal{CS}_f^+$ are nonempty as they contain the functions $v_-\equiv s$ and $v_+\equiv S$ respectively.

\begin{proposition}[Maximum Principle for lattices]
Let $u,v\in C(\overline{\Omega},X)$ and let $v$ be subharmonic and $u$ harmonic. If $v\leq u$ on $\partial\Omega$ then $v\leq u$ in $\overline{\Omega}$.
\end{proposition}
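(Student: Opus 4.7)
The plan is to reduce the statement to the real-valued maximum principle by testing with positive linear functionals, in the same spirit as the proof of Proposition~\ref{weakmax}. First I would form $w := v - u$. Since $u$ is harmonic, so is $-u$, and subtracting the mean-value equality for $u$ from the mean-value inequality for $v$ shows that $w \in C(\overline{\Omega}, X)$ is subharmonic and satisfies $w \leq 0$ on $\partial\Omega$. The claim is thus equivalent to $w \leq 0$ on $\overline{\Omega}$.

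For any positive functional $x' \in X'_+$, the real-valued function $\varphi := \langle w, x' \rangle$ is continuous on $\overline{\Omega}$. Since $x'$ is linear and commutes with the Bochner integral, and since it preserves the order (by definition of positivity), applying $x'$ to the vector-valued subharmonicity inequality $w(\xi) \leq \avgint_{\partial B(\xi,r)} w \, d\sigma_{d-1}$ yields the corresponding scalar inequality for $\varphi$, i.e. $\varphi$ is subharmonic on $\Omega$ in the classical sense. Moreover $\varphi \leq 0$ on $\partial\Omega$, so the real-valued maximum principle (which is already invoked elsewhere in the paper) forces $\varphi \leq 0$ on all of $\overline{\Omega}$.

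To conclude, I would appeal to the standard fact that in a Banach lattice the positive cone $X_+$ is closed and convex, so by the Hahn-Banach separation theorem an element $y \in X$ lies in $-X_+$ if and only if $\langle y, x'\rangle \leq 0$ for every $x' \in X'_+$. Applied pointwise to $w(\xi)$ for each $\xi \in \overline{\Omega}$, this yields $w \leq 0$ and hence $v \leq u$ on $\overline{\Omega}$. The only mildly delicate point in this plan is the last characterization of the order via positive functionals; the remaining steps are a direct and routine transfer of scalar facts through the linearity and positivity of $x'$, which is exactly why the vector-valued maximum principle inherits the same shape as its real-valued counterpart.
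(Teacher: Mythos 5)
Your proposal is correct and follows essentially the same route as the paper: test with positive functionals $x'\in X'_+$, invoke the real-valued maximum principle, and conclude because $X'_+$ determines positivity (your Hahn--Banach separation argument for the closed positive cone is exactly the standard justification of that last fact). The only cosmetic difference is that you work with $w=v-u$ rather than comparing $\langle v,x'\rangle$ and $\langle u,x'\rangle$ directly.
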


\begin{proof}
Let $x'\in X'_+$ then $\langle v,x'\rangle$ and $\langle u,x'\rangle$ are real-valued subharmonic respectively harmonic functions satisfying $\langle v,x'\rangle\leq \langle u,x'\rangle$ on $\partial\Omega$. By the real-valued maximum principle we have that $\langle v,x'\rangle\leq\langle u,x'\rangle$ in $\Omega$. Since $X'_+$ determines positivity the claim follows immediately.
\end{proof}

Let $f\in C(\partial\Omega,X)$ and suppose that $\mathcal{CS}_f^-$ and $\mathcal{CS}_f^+$ are non-empty. Suppose further that the pointwise supremum $\sup_{v\in \mathcal{CS}_f^-}v(\xi)$ and the pointwise infimum $\inf_{v\in \mathcal{CS}_f^+}v(\xi)$ exist for all $\xi\in\Omega$. Then we will denote them by $\underline{H}_f$ and $\overline{H}_f$. The following obvious result is the motivation for Perron's method.

\begin{proposition}\label{obvious}
Suppose that the Dirichlet problem with boundary data $f\in C(\partial\Omega,X)$ admits a classical solution $H_f$, then $\underline{H}_f=\overline{H}_f=H_f$.
\end{proposition}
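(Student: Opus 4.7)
The plan is to show that $H_f$ itself belongs to $\mathcal{CS}_f^-\cap\mathcal{CS}_f^+$ while simultaneously dominating every element of $\mathcal{CS}_f^-$ and being dominated by every element of $\mathcal{CS}_f^+$. Once both facts are in hand, $H_f(\xi)$ lies in each of the sets $\{v(\xi):v\in\mathcal{CS}_f^-\}$ and $\{u(\xi):u\in\mathcal{CS}_f^+\}$ and is at the same time an upper, respectively lower, bound; the supremum and infimum therefore exist in $X$ and coincide with $H_f(\xi)$, without any appeal to Dedekind completeness.

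First I would verify membership. Since $H_f$ is harmonic in $\Omega$ and continuous up to the boundary with $H_f|_{\partial\Omega}=f$, Poisson's Integral Formula (Theorem \ref{Arendt}) gives the mean value identity $H_f(\xi)=\avgint_{\partial B(\xi,r)}H_f\,d\sigma_{d-1}$ for every $B(\xi,r)\ssubset\Omega$. This is simultaneously the defining sub- and superharmonic inequality, and the boundary identity $H_f=f$ satisfies both $H_f\leq f$ and $H_f\geq f$, so $H_f\in\mathcal{CS}_f^-\cap\mathcal{CS}_f^+$.

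Next I would invoke the Maximum Principle for lattices established just above. Any $v\in\mathcal{CS}_f^-$ satisfies $v\leq f=H_f$ on $\partial\Omega$, hence $v\leq H_f$ on $\overline{\Omega}$. To handle supersolutions I would apply the same principle to the subharmonic function $-u$ and the harmonic function $-H_f$: for $u\in\mathcal{CS}_f^+$ the inequality $-u\leq -H_f$ on $\partial\Omega$ propagates to $\overline{\Omega}$, giving $u\geq H_f$. Combining both bounds with the first step yields $\underline{H}_f=\overline{H}_f=H_f$. There is no genuine obstacle; the only point worth flagging is the passage from sub- to superharmonic functions via negation, which is automatic in a Banach lattice because $-(\cdot)$ reverses the order and commutes with the boundary integral.
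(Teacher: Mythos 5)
Your proof is correct and follows essentially the same route as the paper: show $H_f\in\mathcal{CS}_f^-\cap\mathcal{CS}_f^+$ and then use the Maximum Principle for lattices to see that $H_f$ dominates every subsolution and is dominated by every supersolution, so the supremum and infimum are attained at $H_f$ without any completeness assumption. The paper states this in two lines; your version merely spells out the mean value identity and the negation step for supersolutions.
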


\begin{proof}
Since $H_f$ is harmonic and takes on the prescribed boundary values it follows that $H_f\in \mathcal{CS}_f^-\cap \mathcal{CS}_f^+$. The Maximum Principle implies that $H_f=\sup{\mathcal{CS}_f^-}=\inf{\mathcal{CS}_f^+}$.
\end{proof}

In the real-valued case we have

\begin{theorem}\label{Perronmethodreal}
Let $f\in C(\partial\Omega,\R)$, then $\underline{H}_f=\overline{H}_f=H_f$.
\end{theorem}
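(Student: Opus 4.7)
The strategy is to verify that both $\underline{H}_f$ and $\overline{H}_f$ are bounded harmonic functions on $\Omega$ which approach the boundary value $f(z)$ at every regular point $z\in\partial_\textnormal{reg}\Omega$; the uniqueness statement in Theorem \ref{Perronsolution EnU} (applied with $X=\R$) will then identify both with $H_f$. Since $f$ is bounded, the constants $\min f$ and $\max f$ lie in $\mathcal{CS}_f^-$ and $\mathcal{CS}_f^+$, so neither set is empty, and the Maximum Principle for lattices yields $\min f\leq v\leq u\leq\max f$ for any $v\in\mathcal{CS}_f^-$ and $u\in\mathcal{CS}_f^+$. Hence $\underline{H}_f$ and $\overline{H}_f$ are well defined, bounded, and satisfy $\underline{H}_f\leq\overline{H}_f$.

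Harmonicity of $\underline{H}_f$ follows from the classical Perron-Wiener-Brelot lifting argument. Fix $\xi_0\in\Omega$ and a ball $B\ssubset\Omega$ containing $\xi_0$, and choose subsolutions $v_n\in\mathcal{CS}_f^-$ with $v_n(\xi_0)\to\underline{H}_f(\xi_0)$. After replacing $v_n$ by $\max(v_1,\ldots,v_n)$ and then by its harmonic lift on $B$ (equal to $v_n$ off $B$ and to the Poisson integral of $v_n|_{\partial B}$ inside $B$), the resulting sequence remains in $\mathcal{CS}_f^-$, is monotone, and is uniformly bounded. Harnack's convergence theorem supplies a harmonic limit $w$ on $B$ with $w\leq\underline{H}_f$ and $w(\xi_0)=\underline{H}_f(\xi_0)$. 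If $w(\xi_1)<\underline{H}_f(\xi_1)$ at some $\xi_1\in B$, pick $\tilde{v}\in\mathcal{CS}_f^-$ with $\tilde{v}(\xi_1)>w(\xi_1)$, incorporate $\tilde{v}$ into the sequence and repeat the lifting to obtain a harmonic $w'\geq w$ on $B$ with $w'(\xi_0)=w(\xi_0)$ but $w'(\xi_1)>w(\xi_1)$, violating the strong maximum principle. Thus $\underline{H}_f=w$ is harmonic on $B$; the argument for $\overline{H}_f$ is analogous.

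For the boundary behaviour at $z\in\partial_\textnormal{reg}\Omega$, I would invoke the existence of a \emph{barrier} at $z$: a superharmonic function $\beta\in C(\overline{\Omega})$ with $\beta\geq 0$, $\beta(z)=0$, and $\beta$ bounded below by a positive constant on $\partial\Omega$ outside every neighbourhood of $z$; this is a classical characterisation of regularity \cite[Chapter 6.3]{ArmitageGardinerPotentialTheory}. Given $\varepsilon>0$, choose a neighbourhood $U$ of $z$ on which $|f-f(z)|<\varepsilon$ and a constant $k>0$ large enough that $f(z)-\varepsilon-k\beta\leq f$ on $\partial\Omega$. Then $f(z)-\varepsilon-k\beta$ is a continuous subsolution, so $\underline{H}_f\geq f(z)-\varepsilon-k\beta$ in $\Omega$. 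Letting $\xi\to z$ and then $\varepsilon\to 0$ yields $\liminf_{\xi\to z}\underline{H}_f(\xi)\geq f(z)$. The symmetric argument with $f(z)+\varepsilon+k\beta\in\mathcal{CS}_f^+$ gives $\limsup_{\xi\to z}\overline{H}_f(\xi)\leq f(z)$, and combined with $\underline{H}_f\leq\overline{H}_f$ both limits equal $f(z)$. Theorem \ref{Perronsolution EnU} then forces $\underline{H}_f=H_f=\overline{H}_f$.

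The main obstacle is the harmonicity step: the Perron-Wiener-Brelot lifting combined with the strong maximum principle is where the real work lies. The boundary-behaviour step is conceptually elementary once the barrier characterisation of regular points is accepted as an external input from potential theory.
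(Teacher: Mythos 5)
Your route is genuinely different from the paper's: the paper obtains this theorem as an immediate corollary of the operator-theoretic Theorem \ref{X Perron final} (sublinear, monotone maps into $\mathcal{H}_b$ that respect classical solutions must coincide with $f\mapsto H_f$), which in turn rests on Keldysh's theorem quoted from Landkof. You instead run the classical Perron--Wiener--Brelot argument directly: harmonicity by lifting (this part is fine, and is in any case already covered by Proposition \ref{Perron family weak}, i.e.\ \cite[Theorem 2.12]{GilbargTrudingerEllipticpde}), boundary behaviour at regular points by barriers, and then the uniqueness clause of Theorem \ref{Perronsolution EnU}. If it closes, your argument is more self-contained than the paper's, since it avoids Keldysh's theorem entirely.

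There is, however, a genuine gap in the barrier step. You require a barrier $\beta\in C(\overline{\Omega})$, superharmonic, vanishing at $z$ and bounded below by a positive constant on $\partial\Omega$ away from $z$, and you attribute its existence to the classical characterisation of regularity. That characterisation only supplies a (possibly local) barrier that is superharmonic on $\Omega$, tends to $0$ at $z$, and has positive $\liminf$ at the other boundary points; it does \emph{not} give continuity on all of $\overline{\Omega}$, in particular not at the irregular boundary points elsewhere. This matters here precisely because the classes $\mathcal{CS}_f^{\pm}$ in this paper consist of functions continuous on $\overline{\Omega}$ --- deliberately smaller than the Perron--Wiener--Brelot classes, as remarked after the definition --- so $f(z)-\varepsilon-k\beta$ is only a legitimate element of $\mathcal{CS}_f^-$ if $\beta$ really is continuous up to the whole boundary. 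The missing input is exactly \cite[Lemme 1]{BrelotKeldyshtheorem}, used in the proof of Theorem \ref{Keldysh for OBC}: for a regular point $z_0$ and given $\varepsilon,\delta>0$ there is a nonnegative harmonic $w\in C(\overline{\Omega})$ with $w(z_0)<\varepsilon$ and $w\geq 1$ on $\partial\Omega\setminus B(z_0,\delta)$. With $s:=2\sup_{\partial\Omega}|f|$, the function $f(z_0)-\varepsilon-sw$ is then a genuine continuous subsolution and your $\liminf$ estimate goes through (and dually for $\overline{H}_f$). Once you substitute this lemma for the naive barrier, your proof is correct --- and is in essence Brelot's proof of Keldysh's theorem specialised to $T^{\pm}$, so the hard potential-theoretic content has been relocated rather than removed.
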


We will give a proof of this theorem later on. Even if the boundary data is not order bounded it might still be true that $\mathcal{CS}_f^\pm\neq\emptyset$. For example this is the case if the domain is Dirichlet regular. The following example covers a variety of domains which still allow Perron's method to work. It allows us to consider such common examples of irregular sets as the punctured disk $B_\C(0,1)\backslash\{0\}$.

\begin{example}\label{subsolutions possible domain}
Let $\Omega^\ast$ be a Dirichlet regular domain and let $z_1,\ldots,z_k\in\Omega^\ast$. If $\Omega:=\Omega^\ast\backslash\{z_1,\ldots,z_k\}$ then for every $f\in C(\partial\Omega,X)$ the sets $\mathcal{CS}_f^\pm$ are non-empty. To see this note that $\{z_1,\ldots,z_k\}$ is the set of irregular points of $\Omega$. Thus the uniqueness of the Perron solution implies that $H_f=H_{f^\ast}$ for $f^\ast:=f_{|\partial\Omega^\ast}$. Hence $H_f(z_j)$ is well defined and $H_f\in C(\overline{\Omega},X)$. We define
\begin{align*}
v^\pm(\xi):=H_f(\xi)\pm\sum_{j=1}^{k}{|H_f(z_j)-f(z_j)|},
\end{align*}
then it is easy to see that $v^\pm\in \mathcal{CS}_f^\pm$. The construction shows that we may also withdraw an infinite number of points in a regular domain as long as every connected component contains only a finite number of these points.
\end{example}

In some cases, the Banach lattice itself has the property that sub- and supersolutions exist.

\begin{example}\label{subsolutions possible AM}
Let $X$ be an AM-space, see e.g. \cite[Definition 4.20]{AliprantisBurkishawPositiveOperators}, then every compact set is order bounded \cite[Theorem 4.30]{AliprantisBurkishawPositiveOperators}. Hence for every continuous boundary data $f$ the sets $\mathcal{CS}_f^\pm$ are non-empty.
\end{example}

Let $u\in C(\Omega,X)$ be subharmonic and let $B:=B(x,r)\ssubset\Omega$. We define the \emph{harmonic lifting} via
\begin{align*}
u_B:=
\begin{cases}
u&\textnormal{ on }\Omega\backslash B\\
H_{u_{|\partial B}}&\textnormal{ on } B.
\end{cases}
\end{align*}
The Maximum Principle implies that $u_B\geq u$ and it follows that $u_B$ is still subharmonic. Further if $v\in C(\Omega,X)$ is another subharmonic function, then the function $u\vee v$ is subharmonic as well. Let $\mathcal{F}\subset C(\Omega,X)$ be a set of subharmonic functions. We say that $\mathcal{F}$ is a \emph{Perron family} if
\begin{compactenum}[\upshape (a)]
\item $\mathcal{F}$ is upwards directed (a sufficient condition would be that $\mathcal{F}$ is closed under taking the pointwise supremum of two functions)
\item $u_B\in\mathcal{F}$ for every $u\in\mathcal{F}$ and every ball $B:=B(x,r)\ssubset\Omega$.
\end{compactenum} 

A Banach lattice $X$ is called \emph{order complete} if every order bounded subset of $X$ has a supremum and an infimum. Common examples are $L^p$-spaces $(p<\infty)$, a counterexample is the space $C([0,1])$. A functional $x'\in X'_+$ is called \emph{order continuous} if for every upwards directed set $A\subset X$ which has a supremum, we have $\langle\sup A,x'\rangle=\sup\langle A,x'\rangle$.

\begin{proposition}\label{Perron family weak}
Let $X$ be a Banach lattice which is order complete and assume that the order continuous functionals separate. Suppose that $\mathcal{F}\subset C(\Omega,X)$ is a Perron family which is pointwise order bounded and locally bounded. Then the function
\begin{align*}
\xi\mapsto\sup_{v\in\mathcal{F}}v(\xi)
\end{align*}
is well defined and harmonic.
\end{proposition}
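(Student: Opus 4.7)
The plan is to define $u(\xi) := \sup_{v \in \mathcal{F}} v(\xi)$ pointwise, verify that $\langle u, x' \rangle$ is real-valued and harmonic for every positive order continuous $x' \in X'_+$, and then invoke Theorem \ref{Arendt}(i) with the positive order continuous functionals as the separating set $W \subseteq X'$. Well-definedness of $u$ is immediate: for each $\xi \in \Omega$ the set $\{v(\xi) : v \in \mathcal{F}\}$ is order bounded by hypothesis, so order completeness of $X$ supplies the supremum.

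Fix a positive order continuous $x' \in X'_+$ and consider the scalar family $\mathcal{F}_{x'} := \{\langle v, x' \rangle : v \in \mathcal{F}\} \subset C(\Omega, \mathbb{R})$. Positivity of $x'$ preserves the sub-mean-value inequality, so each $\langle v, x' \rangle$ is subharmonic, and it carries the upward directedness of $\mathcal{F}$ over to $\mathcal{F}_{x'}$. Since $x'$ commutes with the Poisson integral, $\langle v_B, x' \rangle = \langle v, x' \rangle_B$ for every ball $B\ssubset\Omega$, so $\mathcal{F}_{x'}$ is closed under harmonic lifting; thus $\mathcal{F}_{x'}$ is a scalar Perron family. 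Local norm boundedness of $\mathcal{F}$ transfers to local boundedness of $\mathcal{F}_{x'}$, and order continuity of $x'$ applied to the upward directed set $\{v(\xi)\}_v$ gives $\langle u(\xi), x' \rangle = \sup_v \langle v(\xi), x' \rangle$. The classical real-valued Perron-family theorem, essentially a Harnack monotone-convergence argument run on harmonic liftings, then asserts that this pointwise supremum is harmonic on $\Omega$.

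To close via Theorem \ref{Arendt}(i), it remains to verify that $u$ is locally norm bounded. For $\xi_0 \in \Omega$, pick a ball $B\ssubset\Omega$ containing $\xi_0$. Since $v_B \geq v$ and $v_B \in \mathcal{F}$, we have $u = \sup_v v_B$ on $B$; each $v_B$ is harmonic in $B$ and equals $v$ on $\partial B$, so Proposition \ref{weakmax}(ii) yields $\|v_B(\xi)\| \leq \max_{z \in \partial B}\|v(z)\| \leq M$ uniformly in $v \in \mathcal{F}$ and $\xi \in B$, where $M$ is the norm bound of $\mathcal{F}$ on $\partial B$. Consequently $|\langle u(\xi), x' \rangle| \leq M\|x'\|$ for every positive order continuous $x'$ with $\|x'\| \leq 1$ and every $\xi \in B$. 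The main obstacle is upgrading this weak-type estimate to a genuine norm bound on $u(\xi)$, which requires extracting from the qualitative separation hypothesis a norming property of the positive order continuous functionals; granted that, Theorem \ref{Arendt}(i) applies and $u \in \mathcal{H}(\Omega, X)$.
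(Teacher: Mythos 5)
Your argument is the paper's argument: well-definedness of the pointwise supremum from order completeness, reduction to the scalar Perron-family theorem by testing against positive order continuous functionals (using order continuity precisely to commute $x'$ with the supremum), and finally Theorem \ref{Arendt}(\ref{Arendt very weak}) with $W$ the set of positive order continuous functionals. All of these steps are carried out correctly, including the observation that harmonic lifting commutes with $x'$, which the paper leaves implicit.

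The one place you go beyond the paper is in verifying the local boundedness hypothesis of Theorem \ref{Arendt}(\ref{Arendt very weak}), and the obstacle you flag there is genuine: from $|\langle u(\xi),x'\rangle|\leq M\|x'\|$ for all positive order continuous $x'$ one can only conclude a norm bound on $u(\xi)$ if these functionals are norming, and ``separating'' does not formally yield ``norming''. Note, however, that the paper's own proof is silent on this point --- it applies Theorem \ref{Arendt}(\ref{Arendt very weak}) to $\sup_{v}v$ without checking that this function is locally bounded --- so you have isolated a step the paper glosses over rather than missed one that it supplies. The gap closes in every situation the paper actually uses. If the norm of $X$ is order continuous, then every functional is order continuous, and splitting $x'=x'_+-x'_-$ with $\|x'_\pm\|\leq\|x'\|$ upgrades your estimate to $\|u(\xi)\|\leq 2M$. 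If $X=Y'$ is a dual lattice, then $Y_+\subset X'_+$ consists of order continuous functionals and norms $X$ (up to the same factor $2$) by the very definition of the dual norm. What is needed in general is a weak Fatou-type property (a norm-bounded upwards directed set with a supremum has a supremum of controlled norm), which holds in both parts of Example \ref{order continuous examples}. So either add this as a harmless hypothesis, or restrict to those examples; as written, your final sentence correctly identifies the only unfinished step, and it is one the paper does not address either.
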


\begin{proof}
The case $X=\R$ is well known, see \cite[Theorem 2.12]{GilbargTrudingerEllipticpde}. In general, the supremum exists since $X$ is order complete. Now let $x'\in X'_+$ be order continuous, then $\langle\mathcal{F},x'\rangle$ is a Perron family as well and we have that $\langle\sup_{v\in\mathcal{F}}v,x'\rangle=\sup_{v\in\mathcal{F}}\langle v,x'\rangle$. The case $X=\R$ shows that the latter is harmonic. By Theorem \ref{Arendt}(\ref{Arendt very weak}) it follows that $\sup_{v\in\mathcal{F}}v$ is harmonic.
\end{proof}

\begin{example}\label{order continuous examples}
\begin{compactenum}[(a)]
\item If $X$ has order continuous norm, see \cite[Section 4.1]{AliprantisBurkishawPositiveOperators}, then $X$ automatically satisfies the conditions of Proposition \ref{Perron family weak}. Examples are reflexive Banach lattices and also $L^1$.
\item If $X$ is a Banach lattice, then $X'$ satisfies the conditions of Proposition \ref{Perron family weak}. To see this let $(x'_\alpha)$ be upwards directed in $X'$. We define $x':X_+\rightarrow\R$ via $x\mapsto\sup_{\alpha}\langle x,x'_\alpha\rangle$. Immediately $x'$ is sublinear on $X_+$. But for every $\alpha,\beta$ there exists $\gamma$ such that $x'_\alpha,x'_\beta\leq x'_\gamma$, from which we obtain that $\langle x+y,x'_\gamma\rangle\geq\langle x,x'_\alpha\rangle+\langle y,x'_\beta\rangle$ for every $x,y\in X_+$. Taking the suprema on both sides we obtain that $x'$ is also superlinear and hence linear on $X_+$. Extend $x'$ to all of $X$ via $x\mapsto\langle x_+,x'\rangle-\langle x_-,x'\rangle$ to obtain a linear functional. From the definition of $x'$ one immediately has that $x'=\sup_\alpha x'_\alpha$. Further one has that every $x\in X_+\subset X''$ is a positive order continuous functional and $X$ separates $X'$ naturally. To show the order completeness of $X'$ let $A\subset X'$ be any subset which has an upper bound. Let $\tilde{A}$ be the set of all finite suprema of elements in $A$, then $\tilde{A}$ is upwards directed. It follows from the first considereation that $\sup\tilde{A}$ exists and one immediately sees that $\sup A=\sup\tilde{A}$.
\end{compactenum}
\end{example}

If $f\in C(\partial\Omega,X)$ such that $\mathcal{CS}_f^-$ is nonempty, then $\mathcal{CS}_f^-$ and $-\mathcal{CS}_f^+$ are Perron families which are bounded from above. So we obtain

\begin{theorem}\label{Perron first step}
Suppose that $X$ is order complete and that the order continuous functionals separate. Let $f\in C(\partial\Omega,X)$ such that $\mathcal{CS}_f^\pm\neq\emptyset$. Then $\underline{H}_f$ and $\overline{H}_f$ exist and are harmonic.
\end{theorem}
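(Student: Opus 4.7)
The plan is to apply Proposition \ref{Perron family weak} after trimming the family $\mathcal{CS}_f^-$ from below, so that pointwise order bounds upgrade to uniform local norm bounds. I will focus on $\underline{H}_f$; the statement for $\overline{H}_f$ follows by the same argument applied to $-\mathcal{CS}_f^+$, i.e.\ with boundary data $-f$.

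First I would record the natural extension of the Maximum Principle for lattices to a sub-vs-super comparison: if $v \in \mathcal{CS}_f^-$ and $w \in \mathcal{CS}_f^+$, then $v \leq w$ on $\overline{\Omega}$. For any $x' \in X'_+$ the function $\langle v - w, x'\rangle$ is a real-valued subharmonic function which is nonpositive on $\partial\Omega$, and hence nonpositive on $\Omega$ by the scalar maximum principle; since $X'_+$ determines positivity, this gives $v \leq w$ on $\overline{\Omega}$.

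Now fix $v_0^- \in \mathcal{CS}_f^-$ and $v_0^+ \in \mathcal{CS}_f^+$ (both nonempty by hypothesis) and set
\[
\tilde{\mathcal{F}} := \{v \vee v_0^- : v \in \mathcal{CS}_f^-\}.
\]
Since $v \vee v_0^-$ is subharmonic and its boundary values are dominated by $f$, we have $\tilde{\mathcal{F}} \subseteq \mathcal{CS}_f^-$; conversely $v \leq v \vee v_0^- \in \tilde{\mathcal{F}}$, so $\sup \tilde{\mathcal{F}}(\xi) = \sup \mathcal{CS}_f^-(\xi) = \underline{H}_f(\xi)$ whenever either side exists. The family $\tilde{\mathcal{F}}$ is a Perron family: upward directedness follows from $(v_1 \vee v_0^-) \vee (v_2 \vee v_0^-) = (v_1 \vee v_2) \vee v_0^- \in \tilde{\mathcal{F}}$, while closure under harmonic lifting holds because for $u \in \tilde{\mathcal{F}}$ and $B \ssubset \Omega$ the lifting $u_B$ is subharmonic, agrees with $u$ on $\partial\Omega$ (so still lies below $f$ there), and satisfies $u_B \geq u \geq v_0^-$ by the Maximum Principle applied in $B$.

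To invoke Proposition \ref{Perron family weak}, the extended maximum principle yields $v_0^- \leq u \leq v_0^+$ on $\overline{\Omega}$ for every $u \in \tilde{\mathcal{F}}$, which is pointwise order boundedness. From $0 \leq u - v_0^- \leq v_0^+ - v_0^-$ and the defining property of a Banach lattice we get $\|u - v_0^-\| \leq \|v_0^+ - v_0^-\|$; since $v_0^\pm \in C(\overline{\Omega},X)$, this produces a uniform norm bound on any compact $K \subset \Omega$, so $\tilde{\mathcal{F}}$ is locally bounded. Proposition \ref{Perron family weak} then gives that $\underline{H}_f$ exists and is harmonic. The main obstacle is precisely this last step: $\mathcal{CS}_f^-$ itself is only order-bounded above and individual subsolutions can be arbitrarily negative in norm, so Proposition \ref{Perron family weak} does not apply to it directly; the trimming $v \mapsto v \vee v_0^-$ is needed to supply a uniform lower bound while leaving the pointwise supremum unchanged.
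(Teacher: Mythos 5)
Your proof is correct and rests on the same pillar as the paper's, namely Proposition \ref{Perron family weak} applied to a Perron family of subsolutions; but you execute a step that the paper skips. The paper's entire argument is the one sentence preceding the theorem: $\mathcal{CS}_f^-$ and $-\mathcal{CS}_f^+$ are Perron families \emph{bounded from above}, ``so we obtain'' the result. Taken literally, this does not verify the hypotheses of Proposition \ref{Perron family weak}, which asks for the family to be pointwise order bounded (both sides) and locally norm bounded --- and $\mathcal{CS}_f^-$ satisfies neither, since $v_0^- - x \in \mathcal{CS}_f^-$ for every $x \in X_+$. Your trimming $v \mapsto v \vee v_0^-$ repairs exactly this: it leaves the pointwise supremum unchanged, preserves the Perron family structure (your verification of upward directedness and of closure under lifting, via $u_B \geq u \geq v_0^-$ so that $u_B = u_B \vee v_0^- \in \tilde{\mathcal{F}}$, is correct), and the sandwich $v_0^- \leq u \leq v_0^+$ combined with the lattice norm inequality gives the local norm bound. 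The comparison $v \leq w$ for $v \in \mathcal{CS}_f^-$, $w \in \mathcal{CS}_f^+$ via positive functionals is also sound. In short: same strategy as the paper, but your version actually discharges the hypotheses of Proposition \ref{Perron family weak} as stated, whereas the paper either relies on an unstated strengthening of that proposition (upper bounds alone suffice in the scalar case) or tacitly assumes the reduction you carry out explicitly.
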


Suppose that $\mathcal{CS}_f^\pm\neq\emptyset$ for every boundary data $f\in C(\partial\Omega,X)$ and that $X$ is as in Theorem \ref{Perron first step}. The question we want to study now is whether in this situation $\underline{H}_f=\overline{H}_f=H_f$. Consider the operators $T^-f\mapsto\underline{H}_f$ and $T^+f\mapsto \overline{H}_f$. It is easy to see that they are super-\slash sublinear. By what we have seen so far, they map into the bounded harmonic functions $\mathcal{H}_b(\Omega,X)$ and satisfy $T^-f=T^+f=H_f$ if a classical solution of the Dirichlet problem with boundary data $f$ exists. This suggests an operator theoretic approach. The operators $T^-$ and $T^+$ are monotone, i.e. if $f\leq g$, then $T^-f\leq T^-g$. For linear operators we have

\begin{theorem}[Keldysh]
Let $T$ be an operator that maps $C(\partial\Omega,\R)$ into the bounded harmonic functions on $\Omega$ such that $T$
\begin{compactenum}[\upshape (i)]
\item is linear
\item positive
\item and maps $f$ to the classical solution if it exists.
\end{compactenum}
Then $Tf=H_f$ for all $f\in C(\partial\Omega,\R)$
\end{theorem}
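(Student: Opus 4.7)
The plan is to verify that $Tf$ satisfies the two defining properties of the Perron solution $H_f$ from Theorem~\ref{Perronsolution EnU}: it is bounded harmonic (given) and attains the correct boundary value at every regular point of $\partial\Omega$. Uniqueness then forces $Tf=H_f$.

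I first observe that $\mathbf{1}\in C(\partial\Omega,\R)$ has the classical solution $\mathbf{1}$, so hypothesis~(iii) gives $T\mathbf{1}=\mathbf{1}$. Combined with positivity and the pointwise estimate $-\|f\|_\infty\mathbf{1}\leq f\leq\|f\|_\infty\mathbf{1}$, this yields contractivity: $\|Tf\|_\infty\leq\|f\|_\infty$. Now fix $z\in\partial_\textnormal{reg}\Omega$ and $\epsilon>0$; choose $\delta>0$ so that $|f(w)-f(z)|<\epsilon$ for $w\in\partial\Omega\cap B(z,\delta)$, and a continuous cutoff $\chi\in C(\partial\Omega)$ with $0\leq\chi\leq 1$, $\chi(z)=1$, and $\mathrm{supp}\,\chi\subset B(z,\delta)$. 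A direct pointwise check gives the two-sided bound
\begin{align*}
-\epsilon\,\chi-2\|f\|_\infty(\mathbf{1}-\chi)\leq f-f(z)\mathbf{1}\leq\epsilon\,\chi+2\|f\|_\infty(\mathbf{1}-\chi)
\end{align*}
on $\partial\Omega$. Applying the positive linear operator $T$, together with $T\mathbf{1}=\mathbf{1}$ and $T\chi\leq\mathbf{1}$, yields
\begin{align*}
|(Tf)(\xi)-f(z)|\leq\epsilon+2\|f\|_\infty\,T(\mathbf{1}-\chi)(\xi)\qquad\text{for all }\xi\in\Omega.
\end{align*}

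The heart of the argument is to prove $T(\mathbf{1}-\chi)(\xi)\to 0$ as $\xi\to z$: once this is known, $\limsup_{\xi\to z}|(Tf)(\xi)-f(z)|\leq\epsilon$, and $\epsilon\to 0$ then gives the desired boundary behaviour of $Tf$. For this I would construct, for every $\eta>0$, a \emph{classical barrier at $z$}: a continuous boundary datum $\tilde\psi$ on $\partial\Omega$ with $\tilde\psi\geq\mathbf{1}-\chi$, $\tilde\psi(z)\leq\eta$, and such that the Dirichlet problem with data $\tilde\psi$ admits a classical solution $u_{\tilde\psi}$. Positivity of $T$ and hypothesis~(iii) then give $T(\mathbf{1}-\chi)\leq T\tilde\psi=u_{\tilde\psi}$; continuity of $u_{\tilde\psi}$ on $\overline{\Omega}$ forces $u_{\tilde\psi}(\xi)\to\tilde\psi(z)\leq\eta$ as $\xi\to z$, and sending $\eta\to 0$ closes the estimate.

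The main technical hurdle is the construction of the classical barrier $\tilde\psi$ at a regular point $z$. Locally, regularity of $z$ supplies a barrier via Wiener's criterion; globalising it to a continuous, classical-solvable boundary datum on all of $\partial\Omega$ is a standard but delicate potential-theoretic step, achievable for instance by combining a local superharmonic barrier near $z$ (Evans's construction) with a harmonic majorant produced via the Perron solution on a Dirichlet regular subdomain of $\Omega$.
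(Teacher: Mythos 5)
Your reduction is sound and is, in substance, Brelot's proof of Keldysh's theorem -- which is also the route the paper takes: for this statement the paper only cites Landkof, and Brelot's argument is reproduced later as Theorem~\ref{Keldysh for OBC}, whose case $X=\R$ is exactly the present theorem since $OBC(\partial\Omega,\R)=C(\partial\Omega,\R)$. Your steps $T\mathbf{1}=\mathbf{1}$, contractivity, the two-sided cutoff estimate, and the reduction of everything to $T(\mathbf{1}-\chi)(\xi)\rightarrow 0$ as $\xi\rightarrow z$ are all correct and correspond precisely to the inequality $f\leq f(z_0)+y+w\otimes s$ used there. (One small bookkeeping point: to get $\tilde\psi\geq\mathbf{1}-\chi$ on all of $\partial\Omega$ you should apply the barrier on a smaller ball $B(z,\delta')$ on which $\chi\equiv 1$, since the barrier is only nonnegative where $\chi$ decays.)

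The genuine gap is the step you yourself flag as the main hurdle and then only gesture at: the existence, for each regular $z$ and each $\eta>0$, of a continuous $\tilde\psi\geq\mathbf{1}-\chi$ with $\tilde\psi(z)\leq\eta$ whose Dirichlet problem is classically solvable. This is exactly \cite[Lemme 1]{BrelotKeldyshtheorem} (a nonnegative $w\in\mathcal{H}(\Omega,\R)\cap C(\overline{\Omega},\R)$ with $w(z)<\eta$ and $w\geq 1$ on $\partial\Omega\setminus B(z,\delta)$), and it carries essentially the whole content of the theorem. Your sketched construction does not work as written: an Evans-type barrier at $z$ is superharmonic, not harmonic, so it is not the boundary trace of a classical solution and hypothesis (iii) cannot be applied to it; a Perron solution on a Dirichlet regular subdomain is harmonic only on that subdomain, not on $\Omega$; and the real difficulty is continuity of the majorant up to \emph{every} point of the possibly irregular boundary $\partial\Omega$, where Perron solutions need not extend continuously. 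That the regularity of $z$ is indispensable here is seen on $\Omega=B(0,1)\setminus\{0\}$ with $z=0$: any such $w$ would extend harmonically across $0$ and the minimum principle would force $w\geq 1$ there. If you are permitted to quote Brelot's Lemme 1 (as the paper does), your argument closes; otherwise you still owe its proof, e.g.\ via a continuous Newtonian potential of a measure carried by $\Omega^{c}$ away from $z$, which is where genuine potential theory enters.
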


\begin{proof}
See \cite[Theorem 4.11]{LandkofPotentialtheory}.
\end{proof}

\begin{remark}
The original proof is available in Russian only. For a reference, see \cite{LandkofPotentialtheory}. Landkof's proof differs from the original one. A simplified proof using Keldysh's ideas was given by Brelot \cite{BrelotKeldyshtheorem} in French. We will reproduce this proof in Theorem \ref{Keldysh for OBC}.
\end{remark}

\begin{corollary}[Keldysh's Theorem for lattices]
Let $X$ be a Banach lattice and let $T:C(\partial\Omega,X)\rightarrow\mathcal{H}_b(\Omega,X)$ such that $T$ is
\begin{compactenum}[\upshape (i)]
\item is linear
\item positive
\item and maps $f$ to the classical solution if it exists.
\end{compactenum}
Then $Tf=H_f$ for all $f\in C(\partial\Omega,X)$
\end{corollary}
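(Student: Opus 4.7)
The approach is to reduce the vector-valued assertion to the scalar Keldysh theorem via the density of simple tensors (Lemma \ref{tensor}). I first record that $T$ is automatically bounded: a positive linear operator from the Banach lattice $C(\partial\Omega,X)$ into the Banach lattice $C_b(\Omega,X)$ (both equipped with the pointwise order) is norm-continuous, by the standard argument that an unbounded positive operator would contradict the monotonicity of the lattice norm applied to a sum $\sum_n x_n$ of positive elements with controlled norms.

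The key step is to establish $T(g\otimes x) = H_g\otimes x$ for all $g\in C(\partial\Omega,\R)$ and $x\in X$. By linearity and the decomposition $x = x_+ - x_-$ I may assume $x\in X_+$. For any $x'\in X'_+$ with $\langle x,x'\rangle > 0$ (which exists by Hahn-Banach whenever $x\neq 0$), I would introduce the scalar operator
\begin{align*}
S\colon C(\partial\Omega,\R)\to\mathcal{H}_b(\Omega,\R),\qquad Sg := \frac{\langle T(g\otimes x),x'\rangle}{\langle x,x'\rangle}.
\end{align*}
Linearity and positivity of $S$ are immediate from those of $T$, together with the positivity of $x$ and $x'$. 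If a real-valued classical solution $H_g$ exists, then $H_g\otimes x$ is a classical solution of the vector-valued problem with boundary data $g\otimes x$, so hypothesis (iii) for $T$ gives $T(g\otimes x) = H_g\otimes x$, and therefore $Sg = H_g$. Scalar Keldysh then yields $Sg = H_g$ for every $g$, i.e.\ $\langle T(g\otimes x),x'\rangle = \langle x,x'\rangle H_g$. For $x'\in X'_+$ with $\langle x,x'\rangle = 0$ but $x'\neq 0$, one selects $x_0\in X_+$ with $\langle x_0,x'\rangle > 0$; applying the identity just proved to $x+tx_0$ and to $tx_0$ and subtracting the two (using linearity of $T$) forces $\langle T(g\otimes x),x'\rangle = 0$. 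Since $X' = X'_+ - X'_+$ on a Banach lattice, the equality $\langle T(g\otimes x),x'\rangle = \langle H_g\otimes x,x'\rangle$ holds for all $x'\in X'$, and hence $T(g\otimes x) = H_g\otimes x$.

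Finally, given arbitrary $f\in C(\partial\Omega,X)$, Lemma \ref{tensor} provides a sequence $f_n = \sum_{j} g_{j,n}\otimes x_{j,n}$ converging to $f$ in $C(\partial\Omega,X)$. By linearity and the previous paragraph, $Tf_n = \sum_{j} H_{g_{j,n}}\otimes x_{j,n}$, which by the construction in the proof of Theorem \ref{Perronsolution EnU} equals $H_{f_n}$. Continuity of $T$ gives $Tf_n\to Tf$ in $C_b(\Omega,X)$, while the maximum principle (Proposition \ref{weakmax}) gives $H_{f_n}\to H_f$ uniformly on $\Omega$. The two limits coincide, so $Tf = H_f$.

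The main obstacle is manufacturing a scalar auxiliary operator that genuinely satisfies all three hypotheses of Keldysh's scalar theorem: the positivity verification and the manipulation needed to treat the positive functionals that annihilate the chosen $x$ are the delicate points, while the passage from simple tensors to arbitrary $f$ is routine once $T$ is known to be bounded.
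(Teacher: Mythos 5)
Your proof is correct and follows essentially the same route as the paper: reduce to simple tensors via the auxiliary scalar operator $Sg=\langle T(g\otimes x),x'\rangle$ (normalized so that scalar Keldysh applies), then pass to general $f$ by density of tensors and automatic continuity of the positive operator $T$ into the Banach lattice $C_b(\Omega,X)$. Your extra care with the positive functionals annihilating $x$ is a welcome refinement of a point the paper glosses over, but it does not change the argument.
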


\begin{proof}
We first show that for $x\in X_+$ and $f\in C(\partial\Omega,\R)$ we have $T(f\otimes x)=H_{f\otimes x}$. Since $H_{f\otimes x}=H_f\otimes x$ it is enough to show that $\langle T(f\otimes x),x'\rangle=H_f$ for each $x'\in X'_+$ such that $\langle x,x'\rangle=1$. Choose such $x'$ and define the operator
\begin{align*}
S:C(\partial\Omega,\R)&\rightarrow\mathcal{H}(\Omega,\R)\\
f&\mapsto\langle T(f\otimes x),x'\rangle.
\end{align*}
It is immediately clear that $S$ satisfies the conditions of Keldysh's Theorem and hence
\begin{align*}
\langle T(f\otimes x),x'\rangle=H_f.
\end{align*}
Next we may consider arbitrary $x=x^+-x^-$ by linearity and further obtain $Tf=H_f$ for any function of the form $f=\sum_{n=1}^{N}{f_n\otimes x_n}$. Now let $f\in C_{\textnormal{ob}}(\partial\Omega,X)$ be arbitrary, then there exist functions $f_n$ of the above form such that $f_n\rightarrow f$. Since $T$ is positive, linear and its range is contained in the Banach lattice $C_b(\Omega,X)$ it is also continuous and hence the claim follows for $f$.
\end{proof}

Since $T^+$ and $T^-$ are not linear, we cannot immediately apply Keldysh's Theorem and need to show some further results.  An ordered vector space $W$ is said to have the \emph{interpolation property} if for every two sets $A,B\subset W$ such that $a\leq b$ for all $a\in A$, $b\in B$ there exists $w\in W$ such that $a\leq w\leq b$ for all $a\in A$, $b\in B$. A function $p$ mapping a vector space $V$ into $W$ is called \emph{sublinear} if
\begin{align*}
p(\lambda v)=\lambda p(v)\textnormal{ and } p(v_1+v_2)\leq p(v_1)+p(v_2)
\end{align*}
for all $\lambda\geq0$ and $v,v_1,v_2\in V$.

\begin{theorem}[Hahn-Banach-Kantorowicz]
Let $V$ be a vector space and $U\subset V$ a subspace. Let $W$ be and ordered vector space which has the interpolation property. Let
\begin{align*}
\varphi:U\rightarrow W
\end{align*}
be linear and
\begin{align*}
p:V\rightarrow W
\end{align*}
be sublinear such that $\varphi\leq p$ on $U$. Then $\varphi$ can be extended to $V$ such that $\varphi\leq p$ on $V$.
\end{theorem}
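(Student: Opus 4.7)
The plan is to mimic the classical real-valued Hahn-Banach argument, with the crucial observation that the only role played by the order of $\R$ in that proof is to produce a real number sandwiched between two sets, and this is exactly what the interpolation property of $W$ provides. Accordingly, I would reduce the theorem to a one-dimensional extension lemma and then invoke Zorn's lemma on the partially ordered set of all dominated linear extensions of $\varphi$ (ordered by inclusion of graphs) to obtain a maximal dominated extension whose domain must be all of $V$.

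For the one-dimensional step, suppose $\varphi$ is already extended to a subspace $U_0\supset U$ with $\varphi\leq p$ on $U_0$, and pick $v_0\in V\setminus U_0$. Any linear extension to $U_0+\R v_0$ has the form $\tilde\varphi(u+tv_0):=\varphi(u)+tw$ for some $w\in W$, and the condition $\tilde\varphi\leq p$ splits, after dividing by $|t|$ and using linearity of $\varphi$ and positive homogeneity of $p$, into
\begin{align*}
\varphi(b)-p(b-v_0)\leq w\leq p(a+v_0)-\varphi(a)\qquad\text{for all }a,b\in U_0.
\end{align*}
To apply the interpolation property I would verify that every left-hand quantity lies below every right-hand quantity: this follows from
\begin{align*}
\varphi(a)+\varphi(b)=\varphi(a+b)\leq p(a+b)=p\bigl((a+v_0)+(b-v_0)\bigr)\leq p(a+v_0)+p(b-v_0),
\end{align*}
which uses linearity of $\varphi$, the inductive hypothesis $\varphi\leq p$ on $U_0$, and sublinearity of $p$. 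The interpolation property then furnishes an admissible $w\in W$.

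The bulk of the work, and the only genuinely new input beyond the classical proof, is the replacement of the infimum/supremum argument by the interpolation property; once that is in place, the algebraic verifications are routine. The one mild subtlety to keep track of is the asymmetric role of $t>0$ and $t<0$ in the rescaling, which is why the two conditions on $w$ involve $a+v_0$ and $b-v_0$ respectively. Combining the one-step extension with Zorn's lemma then completes the proof exactly as in the scalar case.
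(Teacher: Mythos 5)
Your proposal is correct and is exactly the argument the paper has in mind: the paper's proof consists of the single remark that the classical real-valued Hahn--Banach proof (Brezis, Theorem 1.1) goes through verbatim once the interpolation property replaces the supremum/infimum step, which is precisely the one-dimensional extension plus Zorn's lemma argument you spell out. Your compatibility inequality $\varphi(a)+\varphi(b)\leq p(a+v_0)+p(b-v_0)$ and the resulting two-sided condition on $w$ are the correct details.
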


\begin{proof}
Using the interpolation property, this can be established analogously to the real-valued result, see \cite[Theorem 1.1]{BrezisFunctionalanalysis}.
\end{proof}

\begin{corollary}
Given $p$ as in the Hahn-Banach-Kantorowicz Theorem, we have
\begin{align*}
p(v)=\max\{\varphi(v),\varphi:V\rightarrow W\textnormal{ linear},\varphi\leq p\}
\end{align*}
for every $v\in V$.
\end{corollary}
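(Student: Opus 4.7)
My plan is to recognize this as the standard dual characterization that accompanies any Hahn--Banach style extension theorem: the inequality $\varphi(v) \leq p(v)$ holds by definition for every admissible $\varphi$, so the content is that for each fixed $v_0 \in V$ one can produce a single linear $\varphi_0 \leq p$ achieving equality at $v_0$. This will make the supremum a maximum and confirm the claimed value.

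To produce such a $\varphi_0$, I would fix $v_0 \in V$ and start on the one-dimensional subspace $U := \{\lambda v_0 : \lambda \in \R\}$, defining $\varphi_0 : U \to W$ by $\varphi_0(\lambda v_0) := \lambda\, p(v_0)$. This is automatically linear on $U$. Then I would verify the domination $\varphi_0 \leq p$ on $U$ by distinguishing the sign of $\lambda$: for $\lambda \geq 0$ positive homogeneity of $p$ gives $\varphi_0(\lambda v_0) = \lambda p(v_0) = p(\lambda v_0)$; for $\lambda < 0$ I would use $p(0)=0$ together with subadditivity,
\begin{align*}
0 = p(0) \leq p(\lambda v_0) + p(-\lambda v_0) = p(\lambda v_0) + (-\lambda)\, p(v_0),
\end{align*}
which rearranges to $\lambda p(v_0) \leq p(\lambda v_0)$, i.e.\ $\varphi_0 \leq p$.

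With $\varphi_0$ defined and dominated on $U$, I would invoke the Hahn--Banach--Kantorowicz theorem (which applies because $W$ has the interpolation property) to extend $\varphi_0$ to a linear map $\varphi_0 : V \to W$ still satisfying $\varphi_0 \leq p$ on all of $V$. By construction $\varphi_0(v_0) = p(v_0)$, so $p(v_0)$ is attained within the set $\{\varphi(v_0) : \varphi : V \to W \text{ linear}, \varphi \leq p\}$, while no element of this set can exceed $p(v_0)$. Hence the supremum equals $p(v_0)$ and is achieved, proving the corollary.

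I do not anticipate a serious obstacle: the proof is entirely parallel to the scalar case. The only point requiring any care is the verification of $\varphi_0 \leq p$ for negative scalars on the one-dimensional subspace, and this is handled by the little subadditivity argument above; the genuine work has already been absorbed into the Hahn--Banach--Kantorowicz theorem that the corollary is designed to package.
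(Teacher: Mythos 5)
Your proposal is correct and follows exactly the paper's route: define $\varphi$ on the line through $v_0$ by $\lambda v_0\mapsto\lambda p(v_0)$ and extend via the Hahn--Banach--Kantorowicz Theorem; your extra verification of domination for negative scalars (via $p(0)=0$ and subadditivity) is a detail the paper leaves implicit but is exactly right.
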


\begin{proof}
For $v\in V$ define $\varphi(\lambda v):=\lambda p(v)$ for all $\lambda\in\R$ and extend $\varphi$ to $V$ via the Hahn-Banach-Kantorowicz Theorem. The claim follows immediately.
\end{proof}

We show that one can apply the above considerations to the case where $W$ is the space of harmonic and bounded functions.

\begin{lemma}\label{interpolation property lemma}
Let $A,B\subset\mathcal{H}_b(\Omega,X)$ such that $a\leq b$ for all $a\in A$, $b\in B$. If $X$ is order complete and the order continuous functionals separate, then there exists a function $h\in\mathcal{H}_b(\Omega,X)$ such that $a\leq h\leq b$ holds for all $a\in A$, $b\in B$.
\end{lemma}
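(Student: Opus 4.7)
The plan is to construct $h$ as the pointwise supremum of a carefully chosen Perron family and invoke Proposition \ref{Perron family weak}. The degenerate case in which $A$ or $B$ is empty is trivial (take $h$ to be $0$ or any element of the non-empty set), so I will assume both sets are non-empty and fix $a_0\in A$, $b_0\in B$.

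I would then define
\begin{align*}
\mathcal{F}:=\{v\in C(\Omega,X): v\textnormal{ is subharmonic},\ a_0\leq v,\ v\leq b\textnormal{ for every }b\in B\}
\end{align*}
and verify that $\mathcal{F}$ is a Perron family. Upward-directedness is immediate since $v_1\vee v_2$ is subharmonic whenever $v_1$ and $v_2$ are, and both defining inequalities pass to finite suprema. For closure under harmonic lifting, let $v\in\mathcal{F}$ and $B':=B(x,r)\ssubset\Omega$. The function $v_{B'}$ is subharmonic and satisfies $v_{B'}\geq v\geq a_0$; moreover $v_{B'}=v\leq b$ off of $B'$, while inside $B'$ the Maximum Principle for lattices applied to the harmonic functions $v_{B'}$ and $b$ (with $v_{B'}=v\leq b$ on $\partial B'$) yields $v_{B'}\leq b$.

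Next I would verify the hypotheses of Proposition \ref{Perron family weak}. Pointwise order boundedness is clear from $v\leq b_0$. Crucially, the extra constraint $a_0\leq v$ in the definition of $\mathcal{F}$ yields local boundedness: since $a_0(\xi)\leq v(\xi)\leq b_0(\xi)$ forces $v(\xi)$ into an order interval of the Banach lattice $X$, and order intervals are norm bounded via $\|y\|\leq\|z\|$ whenever $|y|\leq z$, one obtains the uniform estimate $\|v(\xi)\|\leq\|a_0\|_\infty+\|b_0-a_0\|_\infty$. Proposition \ref{Perron family weak} then guarantees that $h:=\sup_{v\in\mathcal{F}}v$ is harmonic, and the same estimate shows $h\in\mathcal{H}_b(\Omega,X)$.

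Finally, to see $a\leq h\leq b$ for all $a\in A$, $b\in B$: the upper bound is inherited from each element of $\mathcal{F}$, while for the lower bound note that for any $a\in A$ the function $a\vee a_0$ is subharmonic, dominates $a_0$, and satisfies $a\vee a_0\leq b$ for every $b\in B$ since both $a\leq b$ and $a_0\leq b$; hence $a\vee a_0\in\mathcal{F}$ and $h\geq a\vee a_0\geq a$. The main subtle point is precisely the insertion of the lower bound $a_0\leq v$ in the definition of $\mathcal{F}$: without it, pointwise upper order-boundedness still holds, but local norm-boundedness can fail in a general Banach lattice and Proposition \ref{Perron family weak} would not apply.
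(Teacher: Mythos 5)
Your proof is correct and follows the same strategy as the paper: form a Perron family of continuous subharmonic functions dominated by every $b\in B$ and take its supremum via Proposition \ref{Perron family weak}. The paper's family is $A^*:=\{a\in C_b(\Omega,X): a\text{ subharmonic},\ a\leq b\ \forall b\in B\}$, i.e.\ it uses membership in $C_b$ as the side condition, whereas you anchor the family from below by a fixed $a_0\in A$. Your variant is actually the more careful one: $A^*$ as written contains $a_0-c$ for every $c\in X_+$, so it is neither pointwise order bounded from below nor locally norm bounded, and the hypotheses of Proposition \ref{Perron family weak} are not literally met; one must first truncate by $\vee\,a_0$ (which does not change the supremum) --- exactly the step you build into the definition of $\mathcal{F}$, together with the correct observation that $a_0\leq v\leq b_0$ forces the uniform norm bound via $|y|\leq z\Rightarrow\|y\|\leq\|z\|$. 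The only blemish is your treatment of the degenerate cases: if, say, $A=\emptyset$ and $B$ has no lower bound in $\mathcal{H}_b(\Omega,X)$, no interpolating $h$ exists at all (and picking an element of $B$ certainly does not minorize $B$); like the paper, the lemma should simply be read with $A$ and $B$ non-empty, which is the only situation in which it is used.
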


\begin{proof}
Consider the set
\begin{align*}
A^*:=\{a\in C_b(\Omega,X), a\textnormal{ subharmonic}, a\leq b \textnormal{ for all }b\in B\}\supset A.
\end{align*}
One easily sees that $A^*$ is a Perron family and hence has a harmonic supremum $h$ by Proposition \ref{Perron family weak}. The function $h$ satisfies the claim.
\end{proof}

\begin{remark}
It follows from the preceding lemma, that $\mathcal{H}_b(\Omega,X)$ is an order complete vector lattice. We can also show that $\mathcal{H}_b(\Omega,\R)$ is even a Banach lattice. Since this is not necessary for our considerations, we have deferred this discussion to the Appendix \ref{harmonic lattice appendix}.
\end{remark}

\begin{proposition}
Let $p:C(\partial\Omega,X)\rightarrow\mathcal{H}_b(\Omega,X)$ such that
\begin{compactenum}[\upshape (i)]
\item $p$ is sublinear
\item $p$ is monotone, i.e. $p(f)\leq p(g)$ if $f\leq g$
\item $p$ maps $f$ to the classical solution of the Dirichlet problem if it exists.
\end{compactenum}
Then $p(f)=H_f$ for all $f\in C(\partial\Omega,X)$
\end{proposition}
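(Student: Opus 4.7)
The plan is, given $f \in C(\partial\Omega,X)$, to construct a \emph{linear} map $\varphi : C(\partial\Omega,X) \to \mathcal{H}_b(\Omega,X)$ satisfying $\varphi \le p$ and $\varphi(f) = p(f)$, and then to apply the vector-valued Keldysh theorem (Corollary for lattices, proved earlier) to identify $\varphi$ with the Perron-solution operator.

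First I would put the Hahn--Banach--Kantorowicz machinery in place. The target space $\mathcal{H}_b(\Omega,X)$ is an ordered vector space, and under the standing assumptions on $X$ in this section (order completeness, separating order-continuous functionals), Lemma \ref{interpolation property lemma} gives it the interpolation property. Also $p(0)=0$ by positive homogeneity. On the one-dimensional subspace $\R f$ define $\varphi(\lambda f) := \lambda\, p(f)$. This is linear, and the inequality $\varphi(\lambda f) \le p(\lambda f)$ is trivial for $\lambda \ge 0$, while for $\lambda < 0$ it reduces to $p(f)+p(-f) \ge 0$, which is subadditivity applied to $f+(-f)=0$. Hahn--Banach--Kantorowicz then extends $\varphi$ to all of $C(\partial\Omega,X)$ with $\varphi \le p$, while $\varphi(f)=p(f)$ is preserved.

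Next I would verify that this $\varphi$ satisfies the three hypotheses of Keldysh's theorem for lattices. Linearity holds by construction. For positivity, if $g \ge 0$ then $-g \le 0$, so by monotonicity of $p$ we get $\varphi(-g) \le p(-g) \le p(0) = 0$, hence $\varphi(g) \ge 0$. For classical solutions, if $H_g$ is the classical solution of the Dirichlet problem with data $g$, then $-H_g$ is the classical solution for $-g$; by hypothesis (iii) on $p$, this gives $p(g)=H_g$ and $p(-g)=-H_g$, so
\begin{align*}
\varphi(g) \le p(g) = H_g \quad\text{and}\quad \varphi(g) = -\varphi(-g) \ge -p(-g) = H_g,
\end{align*}
whence $\varphi(g)=H_g$.

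Keldysh's theorem for lattices then forces $\varphi(g) = H_g$ for every $g \in C(\partial\Omega,X)$. In particular $p(f) = \varphi(f) = H_f$, which is the claim. The only nontrivial ingredient is the interpolation property of $\mathcal{H}_b(\Omega,X)$, already handled by Lemma \ref{interpolation property lemma}; after that, sublinearity plus monotonicity of $p$ combine with its correctness on classical solutions to force every Hahn--Banach--Kantorowicz-style linear minorant of $p$ to agree with the Perron operator.
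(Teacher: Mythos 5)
Your proposal is correct and follows essentially the same route as the paper: the paper invokes its corollary to the Hahn--Banach--Kantorowicz theorem to write $p(f)=\max\{\varphi(f):\varphi\le p \text{ linear}\}$ and then shows every such linear minorant is positive and correct on classical solutions, hence equals $f\mapsto H_f$ by Keldysh's theorem for lattices; you merely unwind that corollary by constructing the extremal $\varphi$ with $\varphi(f)=p(f)$ explicitly, and your verifications of positivity and of agreement with classical solutions are the same two-sided estimates used in the paper.
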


\begin{proof}
Consider the set
\begin{align*}
\mathcal{M}:=\{\varphi:C(\partial\Omega,X)\rightarrow \mathcal{H}_b(\Omega,X)\textnormal{ linear },\varphi\leq p\}.
\end{align*}
We will show that $\mathcal{M}$ actually consists of only one element, namely $f\mapsto H_f$. The claim then follows since $p(f)=\max_{\varphi\in\mathcal{M}}\varphi(f)$. Let $\varphi\in\mathcal{M}$. Then $\varphi$ is positive. In fact, let $f\leq0$. Then $p(f)\leq p(0)=0$ and hence $\varphi(f)\leq0$. Now let $f\in C(\partial\Omega,X)$ such that $H_f$ is a classical solution. Then $\varphi(f)\leq H_f$. On the other hand $-\varphi(f)=\varphi(-f)\leq p(-f)=H_{-f}=-H_f$. Hence $\varphi(f)=H_f$. In total, $\varphi$ satisfies the conditions of Keldysh's Theorem which finishes the proof.
\end{proof}

\begin{theorem}\label{X Perron final}
Assume that $X$ is order complete, the order continuous functionals separate and that $\mathcal{CS}_f^\pm\neq\emptyset$ for every $f\in C(\partial\Omega,X)$. Then $\underline{H}_f=\overline{H}_f=H_f$ for all $f\in C(\partial\Omega,X)$.
\end{theorem}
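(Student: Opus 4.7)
The plan is to apply the preceding proposition to $T^+f:=\overline{H}_f$ and then deduce the corresponding statement for $T^-$ by a symmetry argument. First I would check that $T^+$ is well defined as a map $C(\partial\Omega,X)\to\mathcal{H}_b(\Omega,X)$. By hypothesis $\mathcal{CS}_f^\pm\neq\emptyset$, so Theorem \ref{Perron first step} yields that $\overline{H}_f$ exists and is harmonic. For boundedness, any $v\in\mathcal{CS}_f^+$ is continuous on $\overline{\Omega}$, hence bounded, and serves as a pointwise upper bound; a pointwise lower bound is obtained from any $u\in\mathcal{CS}_f^-$, since testing with order continuous $x'\in X'_+$ reduces $u\leq v$ to the real-valued maximum principle for subharmonic vs.\ superharmonic functions, and order continuous functionals separate.

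Next I would verify that $T^+$ satisfies the hypotheses of the previous proposition. Monotonicity is immediate: if $f\leq g$ then $\mathcal{CS}_g^+\subseteq\mathcal{CS}_f^+$. Positive homogeneity follows from $\mathcal{CS}_{\lambda f}^+=\lambda\mathcal{CS}_f^+$ for $\lambda\geq0$. The key step is subadditivity. Given $v\in\mathcal{CS}_f^+$ and $w\in\mathcal{CS}_g^+$, the sum $v+w$ is superharmonic and dominates $f+g$ on $\partial\Omega$, hence lies in $\mathcal{CS}_{f+g}^+$, so $\overline{H}_{f+g}\leq v+w$. Fixing $v$ and taking the infimum over $w\in\mathcal{CS}_g^+$ gives $\overline{H}_{f+g}-v\leq\overline{H}_g$; rearranging and taking the infimum over $v$ yields $\overline{H}_{f+g}\leq\overline{H}_f+\overline{H}_g$. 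Finally, $T^+$ recovers classical solutions by Proposition \ref{obvious}. The preceding proposition therefore forces $T^+f=H_f$ for every $f\in C(\partial\Omega,X)$.

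For $T^-$, observe that $w\in\mathcal{CS}_{-f}^-$ iff $-w\in\mathcal{CS}_f^+$, whence $\underline{H}_{-f}=-\overline{H}_f$, i.e.\ $T^-(-f)=-T^+f$. Combined with $T^+=H$ and the identity $H_{-f}=-H_f$ (which itself follows from the uniqueness part of Theorem \ref{Perronsolution EnU}), this gives $T^-f=-T^+(-f)=-H_{-f}=H_f$, completing the proof. I expect the only real obstacle to be the subadditivity manipulation in the second paragraph: it uses in an essential way that the infima live in the order complete lattice $\mathcal{H}_b(\Omega,X)$ (verified in Lemma \ref{interpolation property lemma}) so that the two-step infimum argument is legitimate.
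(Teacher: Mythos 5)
Your proposal is correct and follows the same route as the paper: the paper's proof of Theorem \ref{X Perron final} is exactly the one-line observation that $T^+$ satisfies the hypotheses of the preceding proposition (with $T^-$ handled analogously), and your paragraphs simply supply the sublinearity, monotonicity and classical-solution checks that the paper declares "easy to see" in the discussion preceding Keldysh's theorem. The only cosmetic remark is that the two-step infimum in your subadditivity argument already works pointwise in the order complete lattice $X$ itself, so no appeal to the order structure of $\mathcal{H}_b(\Omega,X)$ from Lemma \ref{interpolation property lemma} is needed there.
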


\begin{proof}
The mapping $T^+$ satisfies the assumptions of the preceding proposition. The reasoning for $T^-$ is analogous.
\end{proof}

Examples \ref{subsolutions possible domain}, \ref{subsolutions possible AM} and \ref{order continuous examples} describe situations in which the conditions of Theorem \ref{X Perron final} are fulfilled. We also obtain the

\begin{proof}[Proof of Theorem \ref{Perronmethodreal}]
If $X=\R$, the assumptions of Theorem \ref{X Perron final} are satisfied.
\end{proof}

\begin{openproblem}
Is there a function $f\in C(\partial\Omega,X)$ for some $\Omega\subset\R^d$ open and bounded and some Banach lattice $X$ such that $\mathcal{CS}_f^\pm=\emptyset$?
\end{openproblem}

A negative answer to the problem above would imply that Perron's method works whenever the Banach lattice is order complete and has the property that the order continuous functionals separate the space. To avoid this open question we will now demand a stronger continuity assumption of the boundary values and reevaluate the operator theoretic approach of Theorem \ref{X Perron final}. Let $X$ be a Banach lattice which is order complete and whose order continuous functionals separate. A function $f:\partial\Omega\rightarrow X$ is called \emph{order boundedly continuous} at $z_0\in\partial\Omega$ if for every $\varepsilon>0$ there exists $y\in X_+$ with $\|y\|\leq\varepsilon$ and $\delta>0$ such that
\begin{align*}
|f(z)-f(z_0)|\leq y
\end{align*}
for all $z\in\partial\Omega$ for which $|z-z_0|\leq\delta$. The set of all order boundedly continuous functions on $\partial\Omega$ will be denoted by $OBC(\partial\Omega,X)$.

\begin{example}
If $X$ is an AM-space, then every continuous function $f\in C(\partial\Omega,X)$ is automatically an element of $OBC(\partial\Omega,X)$. Indeed, for $z_0\in\partial\Omega$ and $\varepsilon>0$ let $\delta>0$ such that $\|f(z)-f(z_0)\|<\varepsilon$ whenever $|z-z_0|<\delta$. By \cite[Theorem 4.30]{AliprantisBurkishawPositiveOperators} there exists $y:=\sup\{|f(z)-f(z_0)|,|z-z_0|<\delta\}$ and the proof of this theorem can easily be modified to show that $\|y\|<\varepsilon$. Hence $f$ is order boundedly continuous at $z_0$.
\end{example}

A simple compactness argument shows that every $f\in OBC(\partial\Omega,X)$ is order bounded and hence the sets $\mathcal{CS}_f^+$ and $\mathcal{CS}_f^-$ are nonempty. It follows from Proposition \ref{Perron first step} that the operators $\overline{H}_f$ and $\underline{H}_f$ are well defined. We now prove Keldysh's Theorem in this case.

\begin{theorem}[Keldysh's Theorem for $OBC$-functions]\label{Keldysh for OBC}
Let $T:OBC(\partial\Omega,X)\rightarrow \mathcal{H}_b(\Omega,X)$ such that
\begin{compactenum}[(a)]
\item $T$ is linear
\item $T$ is positive
\item $Tf=H_f$ if $H_f$ is a classical solution.
\end{compactenum}
Then $Tf=H_f$ for all $f\in OBC(\partial\Omega,X)$.
\end{theorem}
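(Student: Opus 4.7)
My plan is to follow the three-step strategy of the corollary ``Keldysh's Theorem for lattices'' from this paper, adapted to the $OBC$ setting: verify $T(f)=H_f$ first on elementary tensors, extend by linearity to finite sums, and then approximate arbitrary $f\in OBC(\partial\Omega,X)$ by such sums in a controlled way.

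First I would fix $g\in C(\partial\Omega,\R)$ and $x\in X_+$. The tensor $g\otimes x$ lies in $OBC(\partial\Omega,X)$: by uniform continuity of $g$, for $z_0$ and $\varepsilon>0$ there is $\delta>0$ with $|g(z)-g(z_0)|\leq\varepsilon/\|x\|$ on $|z-z_0|\leq\delta$, so $y:=(\varepsilon/\|x\|)x$ witnesses the $OBC$-condition. For any order-continuous $x'\in X'_+$ with $\langle x,x'\rangle=1$, the real-valued operator $S(g):=\langle T(g\otimes x),x'\rangle$ is linear, positive, and sends $g$ to $H_g$ whenever $H_g$ is a classical solution (since then $H_g\otimes x$ classically solves the tensor problem, so hypothesis (c) gives $T(g\otimes x)=H_g\otimes x$). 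Keldysh's real-valued theorem yields $S=H$, and the hypothesis that order-continuous positive functionals separate $X$ upgrades this to $T(g\otimes x)=H_{g\otimes x}$. Writing $x=x^+-x^-$ and invoking linearity of $T$ extends the identity to every finite tensor sum $f=\sum_k g_k\otimes x_k$.

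For arbitrary $f\in OBC(\partial\Omega,X)$ and $\varepsilon>0$, the plan is to produce a finite tensor sum $f'$ together with $y\in X_+$ such that $\|y\|\leq\varepsilon$ and $|f-f'|\leq y$ pointwise on $\partial\Omega$. Granted such an approximation, the conclusion is immediate: the constant function $y$ is its own classical solution, so by (c) $T(y)=y$; the estimate $-y\leq f-f'\leq y$ together with linearity and positivity of $T$ gives $|T(f)-T(f')|\leq y$ pointwise, hence $\|T(f)-T(f')\|_\infty\leq\varepsilon$. Since the Perron operator $H$ also satisfies (a)--(c), the same estimate applies to $\|H_f-H_{f'}\|_\infty$, and combined with $T(f')=H_{f'}$ this yields $\|T(f)-H_f\|_\infty\leq 2\varepsilon$; letting $\varepsilon\to 0$ finishes the proof. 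To construct $f'$ I would use the $OBC$-property at each $z_0\in\partial\Omega$ to obtain a ball $B(z_0,\delta_{z_0})$ with local bound $y_{z_0}$, extract a finite subcover centered at $z_1,\ldots,z_n$, form a subordinate partition of unity $\{\varphi_i\}$, and set $f':=\sum_i\varphi_i\otimes f(z_i)$; the pointwise estimate $|f-f'|\leq\sum_i\varphi_i y_{z_i}\leq\bigvee_i y_{z_i}$ is then straightforward.

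The main obstacle is controlling $\|\bigvee_i y_{z_i}\|$ by $\varepsilon$: in a general Banach lattice this supremum is not bounded by $\max_i\|y_{z_i}\|$, so naively applying the $OBC$-condition with parameter $\varepsilon$ at each point and using a cover of size $n$ gives only $\|\bigvee_i y_{z_i}\|\leq n\varepsilon$. To remove this factor I would either bootstrap the $OBC$-parameter against the (a priori unknown) cover size, or replace the uniform approximation by an order-theoretic one: exhibit $f$ as the order-limit of a directed net of tensor sums and apply Theorem~\ref{Arendt}(\ref{Arendt nets}) to the bounded net of harmonics $T(f_\alpha)=H_{f_\alpha}$ to pass simultaneously to the limit in both $T$ and $H$. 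In either case the real-valued Keldysh theorem remains the conceptual engine of the argument.
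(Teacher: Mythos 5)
Your first two steps (elementary tensors, then finite tensor sums) are sound and mirror the paper's corollary ``Keldysh's Theorem for lattices'', but the proof stands or falls with the approximation step, and the gap you flag there is genuine --- neither of your proposed repairs closes it. The partition-of-unity construction gives only the pointwise estimate $|f(z)-f'(z)|\le\sum_i\varphi_i(z)\,y_{z_i}$, whose norm at each fixed $z$ is at most $\varepsilon$ (a convex combination), whereas the single majorant you need is $\bigvee_i y_{z_i}$; in a general order complete lattice its norm can be as large as $n\varepsilon$ (in $\ell^1$ take $y_{z_i}=\varepsilon e_i$). The bootstrap is circular: replacing $\varepsilon$ by $\varepsilon/n$ changes the $\delta$'s and hence the cover, so $n$ is not fixed in advance. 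The net version fares no better: Theorem~\ref{Arendt}(\ref{Arendt nets}) requires pointwise \emph{norm} convergence of $(Tf_\alpha)$, and to pass an order limit $f_\alpha\uparrow f$ through $T$ you would need order continuity of $T$, which is not a hypothesis. Note also that the automatic-continuity argument which finishes the lattice corollary is unavailable here, since $OBC(\partial\Omega,X)$ is not obviously norm closed in $C(\partial\Omega,X)$ and hence not known to be a Banach lattice.

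The paper proves the theorem by an entirely different, local argument (Brelot's proof of the scalar Keldysh theorem, transplanted): fix a regular $z_0$ and $\varepsilon>0$, take $y$ and $\delta$ from the $OBC$ condition, and take Brelot's auxiliary harmonic function $w\ge0$, continuous up to the boundary, with $w(z_0)<\varepsilon$ and $w\ge1$ on $\{z\in\partial\Omega:\ |z-z_0|>\delta\}$. With $s:=2\sup_{\partial\Omega}|f|$ one has $f\le f(z_0)+y+w\otimes s$ on $\partial\Omega$; each summand on the right is the boundary value of a classical solution, so linearity, positivity and (c) give $Tf\le f(z_0)+y+w\otimes s$ in $\Omega$. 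Testing with $x'\in X'_+$, letting $\xi\to z_0$ and then $\varepsilon\to0$ (and arguing symmetrically for the lower bound) yields $Tf(\xi)\rightharpoonup f(z_0)$ at every regular $z_0$, and uniqueness of the Perron solution forces $Tf=H_f$. This is exactly where the $OBC$ hypothesis earns its keep: it produces one order majorant $y$ valid on a whole neighbourhood of $z_0$, which is precisely the object your global approximation scheme fails to deliver.
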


\begin{proof}
Let $f\in OBC(\partial\Omega,X)$, $z_0\in\partial\Omega$ regular, $\varepsilon>0$. There exists a $\delta>0$ and a $y\in X_+$ such that $|f(z)-f(z_0)|\leq y$ if $z\in\partial\Omega$ such that $|z-z_0|\leq\delta$. By \cite[Lemme 1]{BrelotKeldyshtheorem} there exists a harmonic function $w$ in $\Omega$ which is non negative, continuous up to the boundary of $\Omega$ and satisfies $w(z_0)<\varepsilon$ as well as $w(z)\geq1$ for all $z\in\partial\Omega$ such that $|z-z_0|>\delta$. Let $s:=2\sup_{\partial\Omega}|f|$, then we have that
\begin{align*}
f\leq f(z_0)+y+w\otimes s.
\end{align*}
Indeed if $|z-z_0|\leq\delta$, then this follows from the choice of $\delta$ and the positivity of $w\otimes s$. Otherwise it follows from the choice of $s$, the properties of $w$ and the positivity of $y$. Using the properties of $T$ we obtain
\begin{align*}
Tf\leq f(z_0)+y+w\otimes s
\end{align*}
and hence for every $x'\in X'_+$ we have
\begin{align*}
\limsup_{\substack{\xi\rightarrow z_0 \\ \xi\in\Omega}}\langle Tf(\xi),x'\rangle&\leq\langle f(z_0),x'\rangle+\varepsilon\|x'\|+\lim_{\substack{\xi\rightarrow z_0 \\ \xi\in\Omega}}w(\xi)\langle s,x'\rangle\\
&\leq\langle f(z_0),x'\rangle+\varepsilon\|x'\|+\varepsilon\langle s,x'\rangle.
\end{align*}
Since $\varepsilon>0$ was arbitrary, it follows that
\begin{align*}
\limsup_{\substack{\xi\rightarrow z \\ \xi\in\Omega}}\langle Tf(\xi),x'\rangle\leq \langle f(z_0),x'\rangle
\end{align*}
and analogously
\begin{align*}
\liminf_{\substack{\xi\rightarrow z \\ \xi\in\Omega}}\langle Tf(\xi),x'\rangle\geq \langle f(z_0),x' \rangle.
\end{align*}
These two inequalities show that $\langle Tf(\xi),x'\rangle\rightarrow\langle f(z_0),x'\rangle$ and by the uniqueness of the Perron solution this implies that $Tf=H_f$.
\end{proof}

From here on we can work analogously to the proof of Theorem \ref{X Perron final} and obtain

\begin{theorem}
Let $X$ be a Banach lattice which is order complete such that the order continuous functional separate. Then $\underline{H}_f=\overline{H}_f=H_f$ for every $f\in OBC(\partial\Omega,X)$.
\end{theorem}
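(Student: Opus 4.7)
My plan is to transcribe the proof of Theorem~\ref{X Perron final} almost verbatim into the $OBC$-setting, substituting Theorem~\ref{Keldysh for OBC} wherever the continuous version of Keldysh's theorem was previously invoked. The text has already observed that every $f \in OBC(\partial\Omega,X)$ is order bounded, so $\mathcal{CS}_f^\pm \neq \emptyset$, and Theorem~\ref{Perron first step} then guarantees that $\underline{H}_f$ and $\overline{H}_f$ exist and are harmonic; squeezing them between the constant super- and subsolutions obtained from an order bound of $f$ also shows that they lie in $\mathcal{H}_b(\Omega,X)$. A uniform-$\delta$, sum-of-$y$'s argument shows moreover that $OBC(\partial\Omega,X)$ is itself a vector space, so the functional-analytic machinery below can be applied on it.

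Next I would consider the operator $T^+\colon OBC(\partial\Omega,X) \to \mathcal{H}_b(\Omega,X)$, $f \mapsto \overline{H}_f$, and verify the three properties that drive the proof of Theorem~\ref{X Perron final}: sublinearity (sums and non-negative scalar multiples of supersolutions for $f$, $g$ give supersolutions for $f+g$ and $\lambda f$), monotonicity ($f\leq g$ yields $\mathcal{CS}_g^+\subseteq\mathcal{CS}_f^+$, so $\overline{H}_g\geq\overline{H}_f$), and agreement with the classical solution when it exists (Proposition~\ref{obvious}). By Lemma~\ref{interpolation property lemma}, $\mathcal{H}_b(\Omega,X)$ has the interpolation property, so the Hahn-Banach-Kantorowicz theorem together with its corollary, applied to $p := T^+$ on $OBC(\partial\Omega,X)$, produces, for every prescribed $f$, a linear map $\varphi\colon OBC(\partial\Omega,X) \to \mathcal{H}_b(\Omega,X)$ with $\varphi \leq T^+$ and $\varphi(f) = T^+ f$.

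The closing step is then to identify every such $\varphi$ with $g \mapsto H_g$. Monotonicity of $T^+$ together with $T^+ 0 = 0$ gives $\varphi(g) \leq T^+ g \leq 0$ for $g \leq 0$, so $\varphi$ is positive. When $H_g$ is a classical solution, the inequalities $\varphi(g) \leq T^+ g = H_g$ and $-\varphi(g) = \varphi(-g) \leq T^+(-g) = H_{-g} = -H_g$ force $\varphi(g) = H_g$. Thus $\varphi$ satisfies the hypotheses of Theorem~\ref{Keldysh for OBC}, so $\varphi g = H_g$ throughout $OBC(\partial\Omega,X)$. The representation $T^+ f = \max_\varphi \varphi(f)$ then yields $\overline{H}_f = H_f$, and the symmetric argument applied to the sublinear operator $-T^-$ yields $\underline{H}_f = H_f$. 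The only ingredient that genuinely had to be replaced with respect to Theorem~\ref{X Perron final} is Keldysh's theorem itself, which is exactly what Theorem~\ref{Keldysh for OBC} provides; so I expect no serious obstacle beyond the bookkeeping just described.
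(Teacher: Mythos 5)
Your proposal is correct and follows essentially the same route as the paper, which itself only remarks that one ``can work analogously to the proof of Theorem~\ref{X Perron final}'' with Theorem~\ref{Keldysh for OBC} replacing the continuous Keldysh theorem. Your additional bookkeeping (checking that $OBC(\partial\Omega,X)$ is a vector space and that $\underline{H}_f,\overline{H}_f$ land in $\mathcal{H}_b(\Omega,X)$) fills in exactly the steps the paper leaves implicit.
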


For the rest of this section, we want to consider Perron's method on a fixed function rather than the operator theoretic approach above where we had to consider all functions on the boundary at once. We will only consider order bounded functions on special Banach lattices and reconstruct special subsolutions from the real-valued case. As a result we will have

\begin{theorem}\label{special cases}
Let $X$ be an AM-space with unit (see \cite[p. 195]{AliprantisBurkishawPositiveOperators}) or $X=\ell^p$ where $1\leq p\leq\infty$. Then for every $f\in C_{\textnormal{ob}}(\partial\Omega,X)$ we have that $\underline{H}_f=\overline{H}_f=H_f$.
\end{theorem}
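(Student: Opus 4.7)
\emph{Plan.} My plan is to prove $\underline H_f(x_0)=\overline H_f(x_0)=H_f(x_0)$ at each fixed $x_0\in\Omega$, in each of the two cases. The easy inequality $\underline H_f(x_0)\leq H_f(x_0)\leq\overline H_f(x_0)$ would hold in any Banach lattice: given $v\in\mathcal{CS}_f^-$ and $x'\in X'_+$, the scalar function $\langle v,x'\rangle$ lies in $\mathcal{CS}_{\langle f,x'\rangle}^-$, so Theorem \ref{Perronmethodreal} together with uniqueness of the Perron solution would give $\langle v(x_0),x'\rangle\leq\langle H_f(x_0),x'\rangle$; since $X'_+$ determines positivity in a Banach lattice, $v(x_0)\leq H_f(x_0)$ in $X$. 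The substantive direction is the reverse, and my strategy is to lift real subsolutions to vector-valued ones in a manner tailored to each of the two structures.

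\emph{The $\ell^p$ case.} Writing $f_n:=\langle f(\cdot),e_n^*\rangle$ and fixing $s\leq f$ in $\ell^p$, I would associate to each real subsolution $u\in\mathcal{CS}_{f_n}^-$ the vector-valued function
\[
v := u\otimes e_n + \sum_{m\neq n} s_m\otimes e_m,
\]
and check that $v\in\mathcal{CS}_f^-$: continuity into $\ell^p$ is clear; subharmonicity holds because $e_n\geq 0$ makes $u\otimes e_n$ a positive tensor of a real subsolution; and the boundary inequality $v\leq f$ follows coordinatewise from $u\leq f_n$ and $s_m\leq f_m$. Using that $\ell^p$ is order complete with suprema computed coordinatewise (the coordinate projections $e_n^*$ being positive), I would then project $\underline H_f(x_0)$ onto the $n$-th coordinate and let $u$ vary over $\mathcal{CS}_{f_n}^-$ to obtain $(\underline H_f(x_0))_n\geq H_{f_n}(x_0)$ from the real-valued Perron theorem. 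Combined with the easy direction this yields $\underline H_f(x_0)=H_f(x_0)$, and the supersolution case is symmetric.

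\emph{The AM-with-unit case.} I would identify $X$ isometrically with $C(K)$ for some compact Hausdorff space $K$, so that $f$ corresponds to a jointly continuous (hence uniformly continuous, by compactness of $\partial\Omega\times K$) function, and $H_f(x_0)$ to a continuous function on $K$. Given $\varepsilon>0$, I would choose a finite open cover $V_1,\dots,V_N$ of $K$ with reference points $k_i\in V_i$ such that $|f(z)(k)-f(z)(k_i)|<\varepsilon$ uniformly in $z\in\partial\Omega$ and $|H_f(x_0)(k)-H_f(x_0)(k_i)|<\varepsilon$ for every $k\in V_i$, together with a subordinate partition of unity $(\varphi_i)\subset C(K)$. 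Invoking Theorem \ref{Perronmethodreal} to pick $u_i\in\mathcal{CS}_{f_{k_i}}^-$ with $u_i(x_0)>H_{f_{k_i}}(x_0)-\varepsilon$, I would set
\[
v := \sum_{i=1}^{N} u_i\otimes\varphi_i - \varepsilon\,e.
\]
Since $\varphi_i\geq 0$, each $u_i\otimes\varphi_i$ is subharmonic in $C(K)$; continuity on $\overline\Omega$ is immediate; and the choice of cover would give $v(z)(k)\leq\sum_i f(z)(k_i)\varphi_i(k)-\varepsilon\leq f(z)(k)$ for all $(z,k)\in\partial\Omega\times K$, so $v\in\mathcal{CS}_f^-$. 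An analogous estimate at $x_0$, using the uniform closeness of $H_f(x_0)(k)$ to $H_f(x_0)(k_i)$ inside $V_i$, would yield $v(x_0)\geq H_f(x_0)-3\varepsilon\,e$ in $C(K)$; letting $\varepsilon\to 0$ closes the reverse inequality, and the supersolution case is dual.

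\emph{Main obstacle.} The $\ell^p$ part should go smoothly thanks to order completeness and coordinatewise suprema. The real challenge is the AM-with-unit case, because for non-Stonean $K$ the space $C(K)$ is neither order complete nor does it have a separating set of order continuous functionals, so Proposition \ref{Perron family weak} and Theorem \ref{X Perron final} are both unavailable. I expect the crux to be the partition-of-unity construction: it welds finitely many real subsolutions, one per reference point $k_i$, into a single vector subsolution whose values in $K$ track those of $H_f(x_0)$ up to a uniform $O(\varepsilon)$ error, the mismatch being paid for by the explicit uniform shift $-\varepsilon\,e$ that restores the boundary inequality.
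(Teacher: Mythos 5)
Your proposal is correct. The $\ell^p$ half is essentially the paper's argument: the paper also places the real sub-/supersolution in the $n$-th coordinate and fills the remaining coordinates with a fixed order bound, then reads off the conclusion coordinate by coordinate (via its Lemma \ref{reconstruction enough} rather than your explicit remark that suprema in $\ell^p$ are computed coordinatewise, but this is the same mechanism). The AM-space half is where you genuinely diverge. The paper works one point-evaluation $\delta_a$ at a time: after normalizing $f\geq 0$, it takes a single real subsolution $v\in\mathcal{CS}^-_{f(\cdot)(a)}$, a Urysohn bump $g$ supported in a neighbourhood $U^\ast$ of $a$ with $g(a)=1$, and lifts to $\overline v=(v-\varepsilon)\otimes g$; the information from the separating family $(\delta_a)_{a\in K}$ is then reassembled through Lemma \ref{reconstruction enough}. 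You instead weld finitely many real subsolutions into one global element $\sum_i u_i\otimes\varphi_i-\varepsilon e$ of $\mathcal{CS}_f^-$ whose value at $x_0$ is within $3\varepsilon e$ of $H_f(x_0)$ uniformly over $K$; this exploits the unit $e$ and the equicontinuity of the compact set $f(\partial\Omega)\subset C(K)$, and it bypasses both the normalization $f\geq0$ and the reduction lemma. Your version buys a stronger conclusion per $\varepsilon$ (a single subsolution uniformly close to $H_f(x_0)$, not just close after testing with one functional), at the cost of the partition-of-unity bookkeeping; the paper's version is lighter pointwise but needs the abstract lemma to conclude. One point you should make explicit: since $C(K)$ is not order complete, the existence of $\sup_{v\in\mathcal{CS}_f^-}v(x_0)$ is itself part of the assertion; your estimates do deliver it (every upper bound $s$ of $\{v(x_0):v\in\mathcal{CS}_f^-\}$ dominates $H_f(x_0)-3\varepsilon e$ for all $\varepsilon$, hence $s\geq H_f(x_0)$, while $H_f(x_0)$ is an upper bound by the easy direction, so it is the least one), but "letting $\varepsilon\to0$ closes the reverse inequality" should be spelled out in exactly this form.
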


Note that in the case of an AM-space every bounded function is already order bounded, see Example \ref{subsolutions possible AM}. But Theorem \ref{X Perron final} does not apply since $X$ is not order complete in general, e.g. take $X=C([0,1])$.

\begin{lemma}\label{reconstruction enough}
Let $X$ be a Banach lattice and $f\in C(\partial\Omega,X)$. Suppose that $W\subset X'_+$ determines positivity and that for all $x'\in W,\varepsilon>0,v\in \mathcal{CS}_{\langle f,x'\rangle}^-,\xi\in\Omega$ there exists a function $\overline{v}\in \mathcal{CS}_f^-$ satisfying $\langle\overline{v}(\xi),x'\rangle\geq v(\xi)-\varepsilon$. Then $\underline{H}_f=u_f$. The analogous result for $\overline{H}_f$ holds.\\
\end{lemma}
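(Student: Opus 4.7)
The plan is to establish $\underline{H}_f = H_f$ via two inequalities, each obtained by testing against functionals in $W$ and reducing to the real-valued Perron method (Theorem \ref{Perronmethodreal}). The central auxiliary observation is that for any $x' \in X'_+$, the scalar function $\langle H_f, x'\rangle$ is bounded, harmonic on $\Omega$, and converges to $\langle f(z), x'\rangle$ at every $z \in \partial_{\textnormal{reg}}\Omega$; by the uniqueness part of Theorem \ref{Perronsolution EnU} applied in the real-valued case, this forces $\langle H_f, x'\rangle = H_{\langle f, x'\rangle}$. I will use this identity repeatedly without further comment.

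For $\underline{H}_f \leq H_f$ I would argue that $H_f$ itself is an upper bound for $\mathcal{CS}_f^-$. Fix $v \in \mathcal{CS}_f^-$ and $x' \in W$. Since $x'$ is positive, $\langle v, x'\rangle$ is a real-valued subharmonic function with $\langle v, x'\rangle \leq \langle f, x'\rangle$ on $\partial\Omega$, so the real-valued Perron method gives $\langle v, x'\rangle \leq H_{\langle f, x'\rangle} = \langle H_f, x'\rangle$ on $\Omega$, i.e.\ $\langle H_f - v, x'\rangle \geq 0$ for every $x' \in W$. Because $W$ determines positivity we get $v \leq H_f$ pointwise, and then $\underline{H}_f \leq H_f$ by taking the supremum.

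For the reverse inequality, fix $\xi \in \Omega$, $x' \in W$, and $\varepsilon > 0$. Theorem \ref{Perronmethodreal} applied to the boundary data $\langle f, x'\rangle \in C(\partial\Omega, \R)$ furnishes some $v \in \mathcal{CS}_{\langle f, x'\rangle}^-$ with
\begin{align*}
v(\xi) \;\geq\; H_{\langle f, x'\rangle}(\xi) - \varepsilon \;=\; \langle H_f(\xi), x'\rangle - \varepsilon.
\end{align*}
The hypothesis of the lemma now provides $\overline{v} \in \mathcal{CS}_f^-$ satisfying $\langle \overline{v}(\xi), x'\rangle \geq v(\xi) - \varepsilon$, hence $\langle \overline{v}(\xi), x'\rangle \geq \langle H_f(\xi), x'\rangle - 2\varepsilon$. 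Since $\overline{v}(\xi) \leq \underline{H}_f(\xi)$ and $x'$ is positive, it follows that $\langle \underline{H}_f(\xi), x'\rangle \geq \langle H_f(\xi), x'\rangle - 2\varepsilon$. Letting $\varepsilon \to 0$ and using once more that $W$ determines positivity yields $H_f \leq \underline{H}_f$, and combined with the first step we obtain the claimed equality. The analogous statement for $\overline{H}_f$ follows by the symmetric argument with supersolutions replacing subsolutions.

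There is no real obstacle here beyond bookkeeping the directions of the inequalities; the only structural ingredients are the scalar identity $\langle H_f, x'\rangle = H_{\langle f, x'\rangle}$ (which follows from the uniqueness in Theorem \ref{Perronsolution EnU}) and the fact that $W$ separates positivity, both of which convert questions about the vector-valued Perron solution into problems already solved in the real-valued theory.
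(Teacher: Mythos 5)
Your argument is, in substance, the same as the paper's: test with functionals in $W$, use the identity $\langle H_f,x'\rangle=H_{\langle f,x'\rangle}$ (via uniqueness of the Perron solution), invoke Theorem \ref{Perronmethodreal} to produce a near-optimal scalar subsolution $v$, and lift it to $\overline{v}\in\mathcal{CS}_f^-$ by the hypothesis. Both directions of the inequality are handled correctly. The one point where your write-up falls short of the lemma's intended content is the existence of $\underline{H}_f$: you write ``$\overline{v}(\xi)\leq\underline{H}_f(\xi)$'' and ``taking the supremum,'' which presupposes that $\sup_{w\in\mathcal{CS}_f^-}w(\xi)$ exists. In the main application of this lemma (Theorem \ref{special cases} for AM-spaces with unit, i.e.\ $X=C(K)$, which is not order complete), that existence is precisely part of what must be proved, since Theorem \ref{Perron first step} is unavailable there. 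The paper avoids this by arguing with an \emph{arbitrary} upper bound $s$ of $\{w(\xi):w\in\mathcal{CS}_f^-\}$: one gets $\langle s,x'\rangle\geq\langle\overline{v}(\xi),x'\rangle\geq\langle H_f(\xi),x'\rangle-2\varepsilon$, hence $s\geq H_f(\xi)$, which together with your first step (showing $H_f(\xi)$ is itself an upper bound) identifies $H_f(\xi)$ as the \emph{least} upper bound and thereby establishes existence and the equality simultaneously. Your own ingredients already suffice for this repair --- simply replace $\underline{H}_f(\xi)$ by a generic upper bound $s$ in the second half --- so the gap is cosmetic rather than structural, but as written the proof does not deliver the full claim in the non--order-complete setting.
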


\begin{proof}
By symmetry it is enough to show the claims for $\underline{H}_f$.\\
Let $\xi\in\Omega$, $\varepsilon\geq0$. By Theorem \ref{Perronmethodreal} we may choose $v\in \mathcal{CS}_{\langle f,x'\rangle}^-$ such that
\begin{align*}
v(\xi)\geq H_{\langle f,x'\rangle}(\xi)-\varepsilon=\langle H_f(\xi),x'\rangle-\varepsilon.
\end{align*}
Then the function $\overline{v}$ corresponding to $v$ satisfies $\langle \overline{v}(\xi),x'\rangle\geq\langle H_f(\xi),x'\rangle-2\varepsilon$. By the Regular Maximum Principle we have that $H_f(\xi)\geq w(\xi)$ for every $w\in \mathcal{CS}_f^-$. Let $s$ be any upper bound of  $\{w(\xi),w\in \mathcal{CS}_f^-\}$, then $\langle s,x'\rangle\geq\langle\overline{v}(\xi),x'\rangle\geq\langle H_f(\xi),x'\rangle-2\varepsilon$. Since $\varepsilon>0$ and $x'\in W$ were arbitrary we obtain that $s\geq H_f(\xi)$.
\end{proof}

\begin{proof}[Proof of Theorem \ref{special cases}]
We will show that in both cases we can construct subsolutions as in Lemma \ref{reconstruction enough}. First let $X$ be an AM-space with unit. By Kakutani's Theorem \cite[Theorem 4.29]{AliprantisBurkishawPositiveOperators} we may assume that $X=C(K)$ for some compact topological space $K$. Replacing $f$ by $f+\|f\|_\infty$ we may assume that $f\geq0$. The family $(\delta_a)_{a\in K}$ determines positivity. Let $a\in K$, $\varepsilon>0$ and $v\in \mathcal{CS}_{f(\cdot)(a)}^-$. For every $z\in\partial\Omega$ we have that $v(z)-\varepsilon<f(z)(a)$. Since the function $(z,k)\mapsto f(z)(k)$ is continuous and $\partial\Omega$ is compact there exists a neighbourhood $U^*$ of $a$ such that $f(z)(k)>v(z)-\varepsilon$ for all $z\in\partial\Omega$ and all $k\in U^*$. By Tietze's Extension Theorem there exists a function $g\in C(K)$ such that
\begin{align*}
g(k)
\begin{cases}
=1,&\textnormal{if }k=a\\
=0,&\textnormal{if }k\notin U^*\\
\in[0,1],&\textnormal{otherwise}
\end{cases}.
\end{align*}
Then the function $\xi\mapsto\overline{v}(\xi)(k):=(v(\xi)-\varepsilon)g(k)$ satisfies the demands.\\

Now let $X=\ell^p$ and let $s$ be an upper bound of $f\in C_\textnormal{ob}(\partial\Omega,X)$. We choose the coordinate functionals $e_n$ as separating subset of $X'$. Let $f_n:=\langle f,e_n\rangle$ and let $v\in \mathcal{CS}_f^+$. Set
\begin{align*}
\overline{v}(\xi)_k:=
\begin{cases}
v\textnormal{ if }k=n\\
s_k \textnormal{ if }k\neq n.
\end{cases}
\end{align*}
One easily sees that $\overline{v}$ satisfies the demands.
\end{proof}

\section{Application: The Poisson Problem for Elliptic Operators}

Again, let $X$ be a Banach space and $\Omega\subset\R^d$ be open and bounded. Recall that a function $u\in C(\Omega,X)$ is \emph{uniformly H\"older continuous} if there exist constants $C>0$ and $0<\alpha<1$ such that
\begin{align*}
\|u(\xi)-u(\eta)\|\leq C|\xi-\eta|^\alpha
\end{align*}
holds for all $\xi,\eta\in\Omega$. The set of all such mappings will be denoted by $C^\alpha(\overline{\Omega},X)$. A function $u\in C(\Omega,X)$ will be called \emph{locally H\"older continuous} if for every $\omega\ssubset\Omega$ there exists $\alpha_\omega$ such that $u\in C^{\alpha_\omega}(\omega,X)$. Further we denote by $C^{2,\alpha}(\overline{\Omega},X)$ the set of all functions $u\in C^2(\overline{\Omega},X)$ such that the second derivatives of $u$ are elements of $C^\alpha(\overline{\Omega},X)$. Equipped with the norms
\begin{align*}
\|u\|_\alpha:=\|u\|_{C(\overline{\Omega},X)}+\sup_{\xi,\eta\in\Omega}\frac{\|u(\xi)-u(\eta)\|}{|\xi-\eta|^\alpha}
\end{align*}
respectively
\begin{align*}
\|u\|_{2,\alpha}:=\|u\|_{C^2(\overline{\Omega},X)}+\sum_{|\beta|=2}{\sup_{\xi,\eta\in\Omega}\frac{\|D^\beta u(\xi)-D^\beta u(\eta)\|}{|\xi-\eta|^\alpha}}
\end{align*}
the spaces $C^\alpha(\overline{\Omega},X)$ and $C^{2,\alpha}(\overline{\Omega},X)$ are Banach spaces. In this section we consider a second order differential operator
\begin{align*}
L:=\sum_{i,j=1}^{d}{a_{ij}(\cdot)D_{ij}}+\sum_{i=1}^{d}{b_i(\cdot)D_i}+c(\cdot)
\end{align*}
whose coefficients $a_{ij},b_i$ and $c$ are in $C^\alpha(\overline{\Omega},\R)$. Further assume that $c\leq0$ and that the $a_{ij}$ are symmetric and satisfy a strict ellipticity condition
\begin{align*}
\sum_{i,j=1}^{d}{\xi_ia_{ij}(x)\xi_j}\geq\lambda|\xi|^2
\end{align*}
for some $\lambda>0$. For $f\in C(\partial\Omega,X)$ and $g\in C(\Omega,X)$ consider the \emph{Poisson problem for $L$} given by
\begin{align*}
\begin{cases}
Lu=g&\textnormal{in }\Omega\\
u=f&\textnormal{on }\partial\Omega
\end{cases}
\end{align*}
A \emph{classical solution} to this problem is a function $u\in C^2(\Omega,X)\cap C(\overline{\Omega},X)$ which satisfies the above. We want to apply the results of the previous sections to show
\begin{theorem}\label{PoissonSolutionL}
Suppose that $\Omega$ has a $C^{2,\alpha}$-boundary and let $L$ be as above. Assume that $g\in C^\alpha(\overline{\Omega},X)$ and $f=F_{|\partial\Omega}$, where $F\in C^{2,\alpha}(\overline{\Omega},X)$. Then there exists a unique classical solution $u$ to the Poisson problem. Moreover $u\in C^{2,\alpha}(\overline{\Omega},X)$.
\end{theorem}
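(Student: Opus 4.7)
The plan is to reduce to zero boundary data, argue uniqueness via functionals and the real-valued maximum principle, and obtain existence by solving the problem first for simple-tensor right-hand sides and then passing to the limit using a vector-valued Schauder estimate.

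Setting $v := u - F$, the Poisson problem is equivalent to $Lv = g - LF =: h$ in $\Omega$ with $v = 0$ on $\partial\Omega$; since the coefficients of $L$ lie in $C^\alpha$ and $F\in C^{2,\alpha}(\overline{\Omega},X)$, we have $h\in C^\alpha(\overline{\Omega},X)$. For uniqueness, if $v$ solves $Lv=0$ with $v|_{\partial\Omega}=0$, then for every $x'\in X'$ the real-valued function $\langle v,x'\rangle$ solves the corresponding homogeneous Dirichlet problem; as $c\le 0$, the real weak maximum principle forces $\langle v,x'\rangle\equiv 0$, and Hahn--Banach then yields $v\equiv 0$.

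For existence, classical Schauder theory (e.g.\ \cite[Theorem 6.14]{GilbargTrudingerEllipticpde}) provides a bounded linear solution operator $T\colon C^\alpha(\overline{\Omega},\R)\to C^{2,\alpha}(\overline{\Omega},\R)$ for the scalar problem with vanishing boundary trace, with Schauder estimate $\|T\tilde h\|_{2,\alpha}\le C\|\tilde h\|_\alpha$. For $h=\sum_{j=1}^{N}h_j\otimes x_j$ with $h_j\in C^\alpha(\overline{\Omega},\R)$ and $x_j\in X$, the function $v:=\sum_j T(h_j)\otimes x_j\in C^{2,\alpha}(\overline{\Omega},X)$ solves the reduced problem. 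For a general $h$, one approximates by simple tensors $h_n\to h$; applying the real-valued Schauder estimate to $\langle v_n-v_m,x'\rangle$ for every $x'\in X'$ with $\|x'\|\le 1$ and taking the supremum (noting that every constituent of $\|\cdot\|_{2,\alpha}$, including the H\"older seminorm of the second derivatives, is recovered as such a supremum via Hahn--Banach) yields $\|v_n-v_m\|_{2,\alpha}\le C\|h_n-h_m\|_\alpha$. Hence $(v_n)$ is Cauchy in $C^{2,\alpha}(\overline{\Omega},X)$ and its limit solves the reduced problem.

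The main obstacle is showing that simple tensors are dense in $C^\alpha(\overline{\Omega},X)$, since Lemma \ref{tensor} gives density only in the $C$-norm. The plan is to extend $h$ to a $C^\alpha$ function on $\R^d$ (possible since $\partial\Omega$ is $C^{2,\alpha}$), mollify to obtain smooth approximations with uniformly bounded H\"older seminorm, and then approximate the mollified functions by finite tensors using compactness of their ranges. Should full $C^\alpha$-density prove too delicate, a workable alternative is to approximate $h$ only in the weaker $C^{\alpha'}$-norm for some $\alpha'\in(0,\alpha)$, obtain a solution $v\in C^{2,\alpha'}(\overline{\Omega},X)$, and then bootstrap $v$ to $C^{2,\alpha}(\overline{\Omega},X)$ by applying the real-valued Schauder estimate to $\langle v,x'\rangle$ for each $x'\in X'$ and taking suprema over $\|x'\|\le 1$.
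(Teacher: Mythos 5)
Your reduction to $f=0$ and your uniqueness argument agree with the paper's, but your existence argument takes a genuinely different route, and its primary version has a real gap. The paper never approximates the right-hand side: it proves the vector-valued Schauder estimate (Theorem \ref{KelloggSchauder}) by testing with functionals, observes that $L_0=\Delta$ is bijective from $\{u\in C^{2,\alpha}(\overline{\Omega},X):u_{|\partial\Omega}=0\}$ onto $C^\alpha(\overline{\Omega},X)$ via the Newtonian potential together with the Perron solution (Theorem \ref{PoissonSolutionLaplace}), and then runs the method of continuity along $L_t=(1-t)\Delta+tL$. The continuity method is a purely Banach-space statement, so it transfers to $X$-valued functions with no density considerations at all; that is exactly why the paper chooses it.

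The gap in your primary plan: finite tensors are \emph{not} dense in $C^\alpha(\overline{\Omega},X)$ for the $C^\alpha$ norm, and mollification does not converge in that norm either (the $C^\alpha$-closure of smooth functions is only the little H\"older space; already $|x|^\alpha$ is not a $C^\alpha$-limit of $C^1$ functions). For the tensor statement, take $X=c_0$, $\Omega=(0,1)$ and $h=(h_n)_n$ with $h_n(t)=\max(0,4^{-n\alpha}-|t-t_n|^\alpha)$ for distinct points $t_n$: each coordinate has H\"older seminorm at least $1$, whereas for any finite tensor $g=\sum_j g_j\otimes x_j$ the seminorm of the $n$-th coordinate of $g$ tends to $0$, so $\|h-g\|_\alpha\geq\tfrac12$ for such $h$. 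Your fallback does work, but it is where the actual content lies and you have not supplied it: you need to show that the partition-of-unity approximants of Lemma \ref{tensor} can be built with $[g]_\alpha\leq C[h]_\alpha$ uniformly (Lipschitz partition functions at scale $\delta$ with bounded overlap, combined with $\sum_i\varphi_i\equiv1$), after which the interpolation inequality $[h-g]_{\alpha'}\leq C\,\|h-g\|_\infty^{1-\alpha'/\alpha}\,[h-g]_\alpha^{\alpha'/\alpha}$ yields convergence in $C^{\alpha'}$. The remaining steps --- the Cauchy argument in $C^{2,\alpha'}$ and the bootstrap of the limit to $C^{2,\alpha}$ by testing with functionals and taking suprema --- are sound and mirror the paper's Theorem \ref{KelloggSchauder}. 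If you prove that density lemma your argument closes; otherwise the continuity method is the cleaner path.
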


Before we can show this we need to consider the special case $L=\Delta$. The \emph{fundamental solution} for the (real-valued) Dirichlet problem is given by
\begin{align*}
\Gamma(x-y)=
\begin{cases}
\frac{1}{2\pi}\log|x-y|,\ &d=2\\
\frac{1}{d(2-d)|B(0,1)|}|x-y|^{2-d},&d>2.
\end{cases}
\end{align*}
The \emph{Newtonian Potential} of $g\in L^\infty(\Omega,X)$ is teh function defined via $w:=\Gamma\ast g$. Analogously to the real-valued case, see e.g. \cite[Lemmas 4.1 \& 4.2]{GilbargTrudingerEllipticpde}, we obtain

\begin{lemma}
Let $\Omega\subset\R^d$ be open and bounded. Let $g\in C(\Omega,X)$ be bounded and locally H{\"o}lder continuous and let $w$ be its Newtonian potential. Then $w\in C^2(\Omega,X)\cap C(\overline{\Omega},X)$ with $\Delta w=g$.
\end{lemma}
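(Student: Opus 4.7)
The plan is to mirror the classical proof in the scalar case (e.g.\ Gilbarg--Trudinger, Lemmas 4.1 and 4.2), leveraging Bochner integration and reducing each analytic step to the real-valued setting by composition with continuous linear functionals. Since $\Gamma \in L^1_{\mathrm{loc}}(\R^d)$, $g$ is bounded, and $\Omega$ is bounded, the Bochner integral $w(x) = \int_\Omega \Gamma(x-y)\, g(y)\, dy$ exists for every $x \in \R^d$ (extending $g$ by zero outside $\Omega$), and continuity of $w$ on $\overline{\Omega}$ follows from the standard convolution argument (translation continuity in $L^1_{\mathrm{loc}}$ combined with boundedness of $g$). For the first derivatives I would define $v_i(x) := \int_\Omega D_i\Gamma(x-y)\, g(y)\, dy$, which is well defined and continuous since $D_i\Gamma \in L^1_{\mathrm{loc}}$. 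To identify $v_i$ with $D_i w$, I compose with $x' \in X'$: since $x'$ commutes with Bochner integrals, the scalar lemma gives $D_i\langle w, x'\rangle = \langle v_i, x'\rangle$ pointwise. Pairing the resulting fundamental-theorem-of-calculus identity $\langle w(x+he_i) - w(x), x'\rangle = \int_0^h \langle v_i(x+te_i), x'\rangle\, dt$ with Hahn--Banach separation of points promotes it to $w(x+he_i) - w(x) = \int_0^h v_i(x+te_i)\, dt$ in $X$; dividing by $h$ and using continuity of $v_i$ yields $D_i w = v_i$ in norm.

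The main obstacle is the second derivative, since $D_{ij}\Gamma \notin L^1_{\mathrm{loc}}$ and direct differentiation under the integral sign fails. Here the local H\"older continuity of $g$ is essential. Following the standard scalar argument, I would fix a $C^1$ domain $\Omega_0$ with $\Omega \ssubset \Omega_0$, extend $g$ by zero, and consider the candidate
\begin{align*}
T_{ij}(x) := \int_{\Omega_0} D_{ij}\Gamma(x-y)\bigl(g(y) - g(x)\bigr)\, dy - g(x)\int_{\partial\Omega_0} D_i\Gamma(x-y)\, n_j(y)\, dS(y).
\end{align*}
The first Bochner integral converges because near $y = x$ the integrand is dominated in norm by a constant multiple of $|y-x|^{\alpha - d}$ (thanks to the H\"older bound on $g$), which is integrable on a ball in $\R^d$; the boundary term is smooth in $x \in \Omega$ since $x$ stays a positive distance from $\partial\Omega_0$. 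Continuity of $T_{ij}$ on $\Omega$ is established exactly as in the scalar case.

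To identify $T_{ij}$ with $D_j v_i = D_{ij} w$, I apply the $x'$-composition trick a second time: $\langle T_{ij}, x'\rangle$ is precisely the analogous expression for the scalar function $\langle g, x'\rangle$, which by the real-valued lemma equals $D_j\langle v_i, x'\rangle$. The integration-plus-Hahn--Banach argument from the first-derivative step (now applied to $v_i$ and $T_{ij}$ in place of $w$ and $v_i$) upgrades this to $D_j v_i = T_{ij}$ strongly, so $D_{ij}w = T_{ij} \in C(\Omega, X)$. Finally $\Delta w = g$ is obtained by summing $i = j$: the scalar identity $\sum_i T_{ii}(x) = g(x)$ is a standard calculation (the interior sum vanishes since $\Delta_x\Gamma = 0$ off the diagonal, while the boundary term evaluates to $g(x)$ by the divergence theorem), and the vector-valued identity follows because it holds after pairing with every $x' \in X'$.
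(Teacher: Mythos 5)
Your proof is correct and is exactly the argument the paper has in mind: the paper gives no details, merely asserting the lemma ``analogously to the real-valued case'' with a citation to Gilbarg--Trudinger, Lemmas 4.1 and 4.2, which is precisely the scalar argument you transplant via Bochner integrals and reduction to functionals. Your fundamental-theorem-of-calculus plus Hahn--Banach device for upgrading the weak identities $D_i\langle w,x'\rangle=\langle v_i,x'\rangle$ and $D_j\langle v_i,x'\rangle=\langle T_{ij},x'\rangle$ to strong differentiability is the standard and correct way to make that reduction rigorous.
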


Assume now that $g$ is bounded and locally H{\"o}lder continuous. Using this lemma we immediately see that the Poisson Problem for $L=\Delta$ is equivalent to the problem
\begin{align*}
\begin{cases}
\Delta v=0&\textnormal{in }\Omega\\
v=f-w&\textnormal{on }\partial\Omega
\end{cases}
\end{align*}
where $v:=u-w$. Taking $v$ as the Perron solution of the above Dirichlet Problem and adding $w$ we obtain

\begin{theorem}\label{PoissonSolutionLaplace}
Let $\Omega\subset\R^d$ be open and bounded. For every bounded and locally H{\"o}lder continuous function $g\in C(\Omega,X)$ and $f\in C(\partial\Omega,X)$ there exists a unique function $u_{f,g}\in C^2(\Omega,X)$ satisfying
\begin{align*}
&\Delta u_{f,g}=g\\
&\lim_{x\rightarrow z}u_{f,g}(x)=f(z) \textnormal{ for all regular } z\in\partial\Omega
\end{align*}
referred to as the \emph{Perron solution} of the Poisson Problem with $L=\Delta$. In particular: The Poisson Problem for $L=\Delta$ and any fixed $g$ as above has a classical solution for every boundary data $f\in C(\partial\Omega,X)$ if and only if every $z\in\partial\Omega$ is regular.
\end{theorem}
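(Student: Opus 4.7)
The plan is to reduce the Poisson problem for $\Delta$ to the classical Dirichlet problem for the Laplacian via the Newtonian potential, then to invoke Theorem \ref{Perronsolution EnU}. By the preceding lemma, $w := \Gamma \ast g$ belongs to $C^2(\Omega,X) \cap C(\overline{\Omega},X)$ and satisfies $\Delta w = g$; in particular $w$ is bounded and $w_{|\partial\Omega} \in C(\partial\Omega,X)$. Hence $f - w_{|\partial\Omega} \in C(\partial\Omega,X)$, and I set $v := H_{f - w_{|\partial\Omega}}$, the bounded Perron solution furnished by Theorem \ref{Perronsolution EnU}. Defining $u_{f,g} := v + w$, I have $u_{f,g} \in C^2(\Omega,X)$, $\Delta u_{f,g} = \Delta v + \Delta w = g$, and at every regular $z \in \partial\Omega$ the continuity of $w$ at $z$ yields
\begin{align*}
\lim_{x \to z} u_{f,g}(x) = \lim_{x \to z} v(x) + w(z) = \bigl(f(z) - w(z)\bigr) + w(z) = f(z).
\end{align*}

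Both $v$ and $w$ are bounded, so $u_{f,g}$ is as well; the uniqueness assertion should therefore be read within the class of bounded solutions, which is natural given the label \emph{Perron solution}. If $\tilde u \in C^2(\Omega,X)$ is another bounded function with $\Delta \tilde u = g$ tending to $f(z)$ at every regular $z$, then $\tilde u - u_{f,g}$ is a bounded harmonic function on $\Omega$ converging to $0$ at every regular boundary point, and the uniqueness clause of Theorem \ref{Perronsolution EnU} applied to the zero boundary data forces $\tilde u = u_{f,g}$.

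For the \emph{in particular} statement, the map $u \mapsto u - w$ is a bijection between classical solutions $u \in C^2(\Omega,X) \cap C(\overline{\Omega},X)$ of the Poisson problem for $(f,g)$ and classical solutions of the pure Dirichlet problem for $\Delta$ with boundary data $f - w_{|\partial\Omega}$. With $g$ (hence $w_{|\partial\Omega}$) held fixed, $f \mapsto f - w_{|\partial\Omega}$ is a bijection of $C(\partial\Omega,X)$; by the Remark following Theorem \ref{Perronsolution EnU}, the pure Dirichlet problem is classically solvable for every $X$-valued continuous boundary datum if and only if $\partial_\textnormal{reg}\Omega = \partial\Omega$, so the same equivalence transfers to the Poisson problem. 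The only mildly delicate step is recognizing that the uniqueness must be read within bounded $C^2$-solutions; everything else is a direct application of the Newtonian potential lemma and Theorem \ref{Perronsolution EnU}.
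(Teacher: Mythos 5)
Your proof is correct and follows essentially the same route as the paper: subtract the Newtonian potential $w=\Gamma\ast g$ to reduce to the Dirichlet problem with boundary data $f-w_{|\partial\Omega}$, apply Theorem \ref{Perronsolution EnU}, and add $w$ back. Your remark that the uniqueness assertion must be read within the class of bounded solutions is a sensible clarification of a point the paper leaves implicit.
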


Note that if $\Omega$ has a $C^{2,\alpha}$-alpha boundary, then every $z\in\partial\Omega$ is re\-gu\-lar, see e.g. \cite[Problem 2.11]{GilbargTrudingerEllipticpde}.\\

We now want to apply Schauder's Continuity Method to derive Theorem \ref{PoissonSolutionL} from Theorem \ref{PoissonSolutionLaplace}. The necessary Schauder Estimates will be derived from the real-valued case rather than proven in detail all over.

\begin{theorem}\label{KelloggSchauder}
Under the assumptions of Theorem \ref{PoissonSolutionL} a classical solution $u$ of the Poisson problem, if it exists, is an element of $C^{2,\alpha}(\overline{\Omega},X)$ and there exists a constant $C$ -- dependent on $\Omega,\lambda,\alpha$ and the $C^\alpha$-norms of the coefficients of $L$ -- such that
\begin{align*}
\|u\|_{2,\alpha}\leq C(\|u\|_{C(\overline{\Omega})}+\|F\|_{2,\alpha}+\|g\|_\alpha).
\end{align*}
Further we have the maximum principle
\begin{align*}
\|u\|_{C(\overline{\Omega},X)}\leq\|f\|_{C(\partial\Omega,X)}+C\|g\|_{C(\overline{\Omega},X)}.
\end{align*}
In particular this holds if $L=\Delta$.
\end{theorem}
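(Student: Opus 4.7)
The plan is to reduce the theorem to the scalar Schauder estimates by composing with bounded linear functionals $x' \in X'$ of norm at most $1$. The key observation is that if $u \in C^2(\Omega, X) \cap C(\overline{\Omega}, X)$ is a classical solution of the Poisson problem for $L$ with data $f$ and $g$, then for each such $x'$ the scalar function $v_{x'} := \langle u, x' \rangle$ lies in $C^2(\Omega, \R) \cap C(\overline{\Omega}, \R)$ and is a classical solution of the scalar problem $L v_{x'} = \langle g, x' \rangle$ in $\Omega$ with boundary datum $\langle f, x' \rangle$ on $\partial \Omega$. This is immediate because the coefficients of $L$ are scalar and $x'$ commutes with partial derivatives by linearity and continuity.

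Next, I would invoke the classical real-valued Schauder estimate (see e.g. \cite[Theorem 6.6]{GilbargTrudingerEllipticpde}) to obtain
\begin{equation*}
\|v_{x'}\|_{2,\alpha} \leq C\bigl(\|v_{x'}\|_{C(\overline{\Omega})} + \|\langle F, x' \rangle\|_{2,\alpha} + \|\langle g, x' \rangle\|_{\alpha}\bigr),
\end{equation*}
with $C$ depending only on $\Omega, \lambda, \alpha$ and the $C^\alpha$-norms of the coefficients of $L$; crucially, $C$ is independent of $x'$. Since $|\langle y, x' \rangle| \leq \|y\|$ whenever $\|x'\| \leq 1$, the three summands on the right are controlled by $\|u\|_{C(\overline{\Omega},X)}$, $\|F\|_{2,\alpha}$ and $\|g\|_{\alpha}$ respectively.

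The third step uses the Hahn-Banach identity $\|y\| = \sup_{\|x'\| \leq 1} |\langle y, x' \rangle|$ to express each vector-valued norm as a supremum of scalar norms. Applied to $u(\xi)$, to $u(\xi) - u(\eta)$, and---recalling $\langle D^\beta u, x' \rangle = D^\beta v_{x'}$---to $D^\beta u(\xi) - D^\beta u(\eta)$, this shows that the scalar Schauder bound passes to the supremum over $\|x'\| \leq 1$, yielding the claimed vector-valued estimate $\|u\|_{2,\alpha} \leq C(\|u\|_{C(\overline{\Omega})} + \|F\|_{2,\alpha} + \|g\|_\alpha)$. The same scheme applied to the scalar comparison principle for $L$ with $c \leq 0$ (cf.\ \cite[Theorem 3.7]{GilbargTrudingerEllipticpde}) in place of the Schauder estimate produces the stated maximum principle.

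The only point requiring care---more a verification than a true obstacle---is justifying that $u$ actually belongs to $C^{2,\alpha}(\overline{\Omega}, X)$ rather than merely that each scalar projection lies in $C^{2,\alpha}(\overline{\Omega}, \R)$. This follows because the uniform Hölder bound on $\|D^\beta u(\xi) - D^\beta u(\eta)\|$ for $\xi, \eta \in \Omega$ forces each $D^\beta u$ with $|\beta| \leq 2$ to be uniformly continuous on $\Omega$, hence extendable continuously to $\overline{\Omega}$, and the $\alpha$-Hölder seminorm on $\overline{\Omega}$ agrees with the one on $\Omega$.
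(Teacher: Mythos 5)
Your proposal follows essentially the same route as the paper: reduce to the scalar Schauder estimate and the scalar comparison principle by testing with functionals $x'$ of norm at most one (whose associated constant is independent of $x'$), and recover the vector-valued norms via the Hahn--Banach identity $\|y\|=\sup_{\|x'\|\le1}|\langle y,x'\rangle|$. The one place where you are too quick is the passage to $u\in C^{2,\alpha}(\overline{\Omega},X)$ for the derivatives of order one: the scalar norm $\|v_{x'}\|_{2,\alpha}$, as defined in the paper and in Gilbarg--Trudinger, controls the H\"older seminorms only of the \emph{second} derivatives, so the ``uniform H\"older bound on $\|D^\beta u(\xi)-D^\beta u(\eta)\|$ for $|\beta|\le 2$'' that you invoke is not directly furnished by the scalar estimate when $|\beta|=1$ (for $|\beta|=0$ it is moot, since $u\in C(\overline{\Omega},X)$ by hypothesis). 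The claim is true but needs one more line: either observe that $\sup_{\|x'\|\le1}\|D^2 v_{x'}\|_\infty\le C$ makes each $D_iu$ uniformly Lipschitz along paths in $\Omega$, and that a bounded open set with $C^{2,\alpha}$ boundary is quasiconvex, so intrinsic and Euclidean distances are comparable; or argue as the paper does, using the Uniform Boundedness Principle to see that the $D_iu$ are bounded, hence lie in $W^{1,p}(\Omega,X)$ for every $p$, and then applying the vector-valued Morrey embedding to conclude that they extend H\"older-continuously to $\overline{\Omega}$. With that repair your argument is complete and matches the paper's proof in all remaining steps.
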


\begin{proof}
The case $X=\R$ is well known, see e.g. \cite[Theorems 6.6 and 6.14, Corollary 3.8]{GilbargTrudingerEllipticpde}. Now consider a classical solution $u$ of the Poisson Problem. For every $x'\in X'$ the function $v:=\langle u,x'\rangle$ is a solution the the corresponding real-valued Poisson Problem
\begin{align*}
\begin{cases}
Lv=\langle g,x'\rangle&\textnormal{in }\Omega\\
v=\langle f,x'\rangle&\textnormal{on }\partial\Omega
\end{cases}
\end{align*}
and hence enjoys the claimed properties. Taking the supremum over all normed $x'$ it follows that
\begin{align*}
\frac{\|D_{ij}u(\xi)-D_{ij}u(\eta)\|}{|\xi-\eta|^\alpha}\leq C(\|u\|_{C(\overline{\Omega},X)}+\|f\|_{2,\alpha}+\|g\|_\alpha)
\end{align*}
hence $D_{ij}u\in C^{\alpha}(\overline{\Omega},X)$. For the first claim it remains to show that the first derivatives $D_iu$ are continuous up to the boundary. The Uniform Boundedness Principle ensures that they are bounded. It follows that $D_iu\in W^{1,p}(\Omega,X)$ for every $p\geq1$ and hence they are uniformly H\"older continuous by Morrey's Embedding Theorem, see \cite[Theorem 5.2]{ArendtKreuterMappingtheorems}. We now show the estimate for the $C^{2,\alpha}$-norm of $u$. Choose $\xi_\beta\in\overline{\Omega}$ and $x_\beta'\in \overline{B_{X'}(0,1)}$ such that
\begin{align*}
\|D^\beta u\|_{C(\overline{\Omega},X)}=\langle D^\beta u(\xi_\beta),x_\beta'\rangle
\end{align*}
holds for every multi-index $\beta$ with $|\beta|=1,2$. Using these and the above estimate we compute
\begin{align*}
\|u\|_{2,\alpha}=&\|u\|_{C(\overline{\Omega},X)}+\sum_{|\beta|=1,2}{\|D^\beta u\|_{C(\overline{\Omega},X)}}+\sum_{|\beta|=2}{\sup_{\xi,\eta\in\Omega}\frac{\|D^\beta u(\xi)-D^\beta u(\eta)\|}{|\xi-\eta|^\alpha}}\\
=&\|u\|_{C(\overline{\Omega},X)}+\sum_{|\beta|=1,2}{\|D^\beta\langle u,x_\beta'\rangle\|_{C(\overline{\Omega},\R)}}\\&+\sum_{|\beta|=2}{\sup_{\xi,\eta\in\Omega}\frac{\|D^\beta u(\xi)-D^\beta u(\eta)\|}{|\xi-\eta|^\alpha}}\\
\leq&\|u\|_{C(\overline{\Omega},X)}+\sum_{|\beta|=1,2}{\|\langle u,x_\beta'\rangle\|_{2,\alpha}}\\&+\sum_{|\beta|=2}{\sup_{\xi,\eta\in\Omega}\frac{\|D^\beta u(\xi)-D^\beta u(\eta)\|}{|\xi-\eta|^\alpha}}\\
\leq&C(\|u\|_{C(\overline{\Omega},X)}+\|f\|_{2,\alpha}+\|g\|_\alpha)
\end{align*}
where $C$ is a multiple of the original $C$ only dependant on the dimension of $\Omega$. The last estimate follows similarly from the real case.
\end{proof}

\begin{proof}[Proof of Theorem \ref{PoissonSolutionL}]
Uniqueness follows from the maximum principle. In order to show existence it suffices to consider the case $f=0$ since the Poisson Problem is equivalent to
\begin{align*}
\begin{cases}
Lv=h&\textnormal{in }\Omega\\
v=0&\textnormal{on }\partial\Omega
\end{cases}
\end{align*}
with $v\in C^{2,\alpha}(\overline{\Omega},X)$ and $h\in C^\alpha(\overline{\Omega},X)$. In fact, choose $h=g-LF$, then $u=v+F$ is the solution to the original problem. Let $L_0:=\Delta$ and $L_1:=L$. Then the operator $L_t:=(1-t)L_0+tL_1$ satisfies the same requirements as $L$ where the constants and the $C^{\alpha}$-bounds for the coefficients can be chosen independently of $t$. We have that $L_t\in\mathcal{L}(\{u\in C^{2,\alpha}(\overline{\Omega},X),u_{|\partial\Omega}=0\},C^{\alpha}(\overline{\Omega},X))$. Now if $L_tu_t=f$, then the estimates in Theorem \ref{KelloggSchauder} show that the Continuity Method \cite[Theorem 5.2]{GilbargTrudingerEllipticpde} is applicable. Since $\Omega$ is regular, Theorem \ref{PoissonSolutionLaplace} shows that  $L_0$ is bijective. Hence $L_1$ is bijective as well from which we obtain the result.
\end{proof}

For the remainder of this section we will take a quick look at the regularity of the solutions. Analogously to the Schauder estimates, we will show that the regularity carries over from the real to the vector-valued case. The Banach spaces $C^{k,\alpha}(\overline{\Omega},X)$ $(k\in\N_0)$ are defined analogously to the space $C^{2,\alpha}(\overline{\Omega},X)$. We start with the interior regularity:

\begin{proposition}\label{local regularity}
Let $k\in\N_0$ and $0<\alpha<1$. Suppose $u\in C^2(\Omega,X)$ satisfies $Lu=g$ for some $g\in C^{k,\alpha}(\Omega,X)$. Suppose further that the coefficients of $L$ are in $C^{k,\alpha}(\Omega,\R)$ and that they are bounded in this space. Then $u\in C^{k+2,\alpha}(\overline{\Omega_0},X)$ for every $\Omega_0\ssubset\Omega_1\ssubset\Omega$ and we have the estimate
\begin{align}\label{elliptic estimate}
\|u\|_{C^{k+2,\alpha}(\overline{\Omega_0},X)}\leq C(\|u\|_{C(\overline{\Omega_1},X)}+\|g\|_{C^{k,\alpha}(\overline{\Omega_1},X)}),
\end{align}
where $C=C(L,\Omega_1,\Omega_0,\alpha,d)$. In particular: If we replace $C^{k,\alpha}$ by $C^\infty$, then also $u\in C^\infty(\Omega,X)$.
\end{proposition}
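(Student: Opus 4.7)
The plan is to reduce to the real-valued case via testing with functionals $x'\in X'$, exactly as in the proof of Theorem \ref{KelloggSchauder}, and then to induct on $k$ using the real-valued higher-order interior Schauder estimates (e.g.\ \cite[Theorem 6.17]{GilbargTrudingerEllipticpde}).

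For any $x'\in X'$ with $\|x'\|\leq 1$, set $v_{x'}:=\langle u,x'\rangle\in C^2(\Omega,\R)$; then $Lv_{x'}=\langle g,x'\rangle\in C^{k,\alpha}(\Omega,\R)$ with norms controlled uniformly in $x'$ by the vector-valued norms of $u$ and $g$. The real-valued interior Schauder estimate yields
\begin{equation*}
\|v_{x'}\|_{C^{k+2,\alpha}(\overline{\Omega_0},\R)}\leq C\bigl(\|u\|_{C(\overline{\Omega_1},X)}+\|g\|_{C^{k,\alpha}(\overline{\Omega_1},X)}\bigr),
\end{equation*}
with $C$ independent of $x'$. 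Once the pointwise $X$-valued derivatives $D^\beta u$ are known to exist on $\Omega_0$ for $|\beta|\leq k+2$, the identity $D^\beta\langle u,x'\rangle=\langle D^\beta u,x'\rangle$ and a supremum over $\|x'\|\leq 1$ transfer this bound to the $X$-valued norm of $u$, yielding the required estimate \eqref{elliptic estimate}.

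The non-trivial task is therefore to establish that $u$ has enough strong derivatives, which I would do by induction on $k$. The base case $k=0$ requires no additional work since $u\in C^2(\Omega,X)$ is assumed. For the inductive step from $k$ to $k+1$, on an intermediate triple $\Omega_0\ssubset\Omega_1'\ssubset\Omega_1$ the inductive hypothesis first yields $u\in C^{k+2,\alpha}(\overline{\Omega_1'},X)$ with the corresponding estimate. Formally differentiating $Lu=g$ gives, for each $i$,
\begin{equation*}
L(D_iu)=D_ig-\sum_{j,\ell}(D_ia_{j\ell})D_{j\ell}u-\sum_j(D_ib_j)D_ju-(D_ic)u,
\end{equation*}
whose right-hand side lies in $C^{k,\alpha}(\Omega_1',X)$ since $C^{k,\alpha}$ is stable under products with real-valued $C^{k,\alpha}$-factors. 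Provided one already knows $D_iu\in C^2$, the inductive hypothesis applied to $D_iu$ furnishes $D_iu\in C^{k+2,\alpha}(\overline{\Omega_0},X)$, hence $u\in C^{k+3,\alpha}(\overline{\Omega_0},X)$.

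The main obstacle is precisely the $C^2$-regularity of $D_iu$ needed before invoking the inductive hypothesis. For $k\geq 1$ this is automatic from the improved regularity $u\in C^{k+2,\alpha}\subset C^3$. The delicate transition is $k=0\to k=1$, where only $u\in C^{2,\alpha}$ is at hand. To handle it I would apply the base case not to $D_iu$ but to the difference quotients $\tau_h^iu(\xi):=(u(\xi+he_i)-u(\xi))/h$, which solve an equation of the same type with $C^\alpha$-right-hand side bounded uniformly in $h$ (using that the coefficients are $C^{1,\alpha}$ and $u\in C^{2,\alpha}$). The base estimate then produces a uniform $C^{2,\alpha}(\overline{\Omega_0},X)$-bound on $\tau_h^iu$; since $\tau_h^iu\to D_iu$ uniformly on compacta as $h\to 0$, an Arzel\`a--Ascoli argument combined with the lower semicontinuity of the $C^{2,\alpha}$-norm under $C^2$-convergence transfers the bound to $D_iu\in C^{2,\alpha}$, closing the induction. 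The concluding $C^\infty$-assertion then follows by applying the result for every $k\in\N_0$.
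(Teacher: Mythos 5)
Your reduction to the scalar case and your identification of the real obstacle --- producing the strong $X$-valued derivatives rather than just the scalar estimates --- are both on target, and the base case, the formal differentiation of the equation, and the uniform $C^\alpha$-bound on the right-hand sides for the difference quotients are all fine. The genuine gap is the step where you pass from a uniform $C^{2,\alpha}(\overline{\Omega_0},X)$-bound on the family $\tau_h^iu$ to convergence of a subsequence in $C^2(\overline{\Omega_0},X)$. The Arzel\`a--Ascoli theorem for $X$-valued functions requires, besides equicontinuity, that the sets $\{D^\beta\tau_h^iu(\xi):h\}$ be relatively compact in $X$ for each fixed $\xi$; uniform boundedness does not suffice when $\dim X=\infty$, and nothing in your setup provides this pointwise compactness. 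That some such input is genuinely needed is shown by examples like $t\mapsto(\sin(nt)/n)_n\in c_0$, whose scalarizations are all smooth with uniformly bounded derivatives while the function itself is not strongly $C^1$. Consequently the crucial transition $k=0\to k=1$ is not closed, and the subsequent appeal to lower semicontinuity of the H\"older seminorm presupposes exactly the $C^2$-convergence that is missing.

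The paper avoids compactness altogether: it mollifies, setting $u_h:=\rho_h\ast u$ and $g_h:=\rho_h\ast g$, so that the approximants are automatically strongly $C^\infty$ and the scalarization argument of Theorem \ref{KelloggSchauder} yields the full $C^{k+2,\alpha}$-estimate for them; applying that estimate to differences $u_h-u_{h'}$, whose data tend to $0$ uniformly resp.\ in $C^{k,\alpha}(\overline{\Omega_1},X)$, shows that $(u_h)$ is Cauchy in the Banach space $C^{k+2,\alpha}(\overline{\Omega_0},X)$, and completeness does the work that compactness cannot. If you want to salvage the difference-quotient route, the analogous repair is to apply your base-case estimate to $\tau_h^iu-\tau_{h'}^iu$ and prove Cauchyness as $h,h'\to0$ instead of extracting a subsequence; but then you must show that the corresponding right-hand sides converge in $C^\alpha$, which requires additional care (for instance passing to a slightly smaller H\"older exponent), so the mollification argument is the cleaner path.
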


\begin{proof}
The case $X=\R$ is well known, see \cite[Theorem 6.17 and Problem 6.1]{GilbargTrudingerEllipticpde}. Let $\rho_h$ be a mollifier. If $|h|<\dist(\Omega_1,\partial\Omega)$, then the functions $u_h:=\rho_h\ast u$ and $g_h:=\rho_h\ast g$ satisfy $Lu_h=g_h$ in $\Omega_1$. We have $u_h\in C^\infty(\overline{\Omega_1},X)$ and hence we may compute the norm of $u_h$ in $C^{k+2,\alpha}(\overline{\Omega_0},X)$. Proceeding as in the proof of Theorem \ref{KelloggSchauder} we obtain the estimate \eqref{elliptic estimate} for $u_h$ and $g_h$. Since $u_h\rightarrow u$ uniformly in $\overline{\Omega_1}$ and $g_h\rightarrow g$ in $C^{k,\alpha}(\overline{\Omega_1},X)$ it follows that $u_h$ is Cauchy in $C^{k+2,\alpha}(\overline{\Omega_0},X)$ and hence converges in this space. We have that $u_h\rightarrow u$ in $C^{2,\alpha}(\overline{\Omega_0},X)$ and hence $u\in C^{k+2,\alpha}(\overline{\Omega_0},X)$ and the estimate \eqref{elliptic estimate} follows as well.
\end{proof}

\begin{corollary}
In the setting of Theorem \ref{PoissonSolutionL} assume that the boundary of $\Omega$ is of class $C^{k+2,\alpha}$. Assume further that $g\in C^{k,\alpha}(\overline{\Omega},X)$ and the coefficients of $L$ are bounded in $C^{k,\alpha}(\overline{\Omega},\R)$. Assume finally that $F\in C^{k+2,\alpha}(\overline{\Omega},X)$. Then the solution $u$ to the Poisson problem is an element of $C^{k+2,\alpha}(\overline{\Omega},X)$ as well. In particular: If we replace $C^{k,\alpha}$ and $C^{k+2,\alpha}$ by $C^\infty$ then $u\in C^\infty(\overline{\Omega},X)$.
\end{corollary}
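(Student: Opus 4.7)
The strategy mirrors that used throughout this section: transfer the real-valued higher boundary regularity of \cite[Theorem 6.19]{GilbargTrudingerEllipticpde} to the vector-valued setting by composing with functionals, and promote the $C^{2,\alpha}$-solution provided by Theorem \ref{PoissonSolutionL} to a $C^{k+2,\alpha}$-solution by approximation.

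First I would upgrade the Schauder estimate of Theorem \ref{KelloggSchauder} to the $C^{k+2,\alpha}$-scale: for any $v\in C^{k+2,\alpha}(\overline{\Omega},X)$ solving the Poisson problem for data $(F,g)$,
\begin{align*}
\|v\|_{C^{k+2,\alpha}(\overline{\Omega},X)}\leq C\bigl(\|v\|_{C(\overline{\Omega},X)}+\|F\|_{C^{k+2,\alpha}(\overline{\Omega},X)}+\|g\|_{C^{k,\alpha}(\overline{\Omega},X)}\bigr).
\end{align*}
This follows verbatim from the argument in Theorem \ref{KelloggSchauder}: apply the real-valued $C^{k+2,\alpha}$-Schauder estimate to each $\langle v,x'\rangle$ with $\|x'\|\leq 1$, and take suprema over extremal functionals $x_\beta'$ realising each $\|D^\beta v\|_{C(\overline{\Omega},X)}$ for $|\beta|\leq k+2$.

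Next I would construct the solution as a limit of $C^{k+2,\alpha}$-solutions. By a H\"older-scale variant of Lemma \ref{tensor}, established via partition of unity and mollification in boundary-straightening charts (the same tools already used in Proposition \ref{local regularity}), approximate $F$ by $F^{(n)}=\sum_{j}\varphi_j^{(n)}\otimes x_j^{(n)}\to F$ in $C^{k+2,\alpha}(\overline{\Omega},X)$ and $g$ by $g^{(n)}=\sum_{j}\psi_j^{(n)}\otimes y_j^{(n)}\to g$ in $C^{k,\alpha}(\overline{\Omega},X)$, where the $\varphi_j^{(n)},\psi_j^{(n)}$ are scalar-valued. For each scalar summand the real-valued higher boundary regularity theorem produces a $C^{k+2,\alpha}(\overline{\Omega},\R)$-solution, so assembling them by linearity yields $u^{(n)}\in C^{k+2,\alpha}(\overline{\Omega},X)$ solving the Poisson problem for $(F^{(n)},g^{(n)})$.

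Finally, the higher-order Schauder estimate applied to the differences $u^{(n)}-u^{(m)}$, which solve the Poisson problem for $(F^{(n)}-F^{(m)},g^{(n)}-g^{(m)})$, shows that $(u^{(n)})$ is Cauchy in $C^{k+2,\alpha}(\overline{\Omega},X)$. Its limit $\tilde u$ lies in that space, solves the Poisson problem for $(F,g)$, and hence coincides with $u$ by the uniqueness part of Theorem \ref{PoissonSolutionL}; the $C^\infty$ conclusion follows by letting $k$ vary. The main obstacle I expect is the density statement used in the third paragraph: while intuitive, proving that scalar--vector tensor products are dense in the H\"older spaces $C^{k,\alpha}(\overline{\Omega},X)$ requires combining the partition-of-unity scheme of Lemma \ref{tensor} with a mollification argument that simultaneously controls tangential and normal H\"older seminorms, exploiting the $C^{k+2,\alpha}$-regularity of $\partial\Omega$.
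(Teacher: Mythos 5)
There is a genuine gap, and it sits exactly where you suspected: the density claim in your third paragraph. You need to approximate $g$ in the \emph{full} $C^{k,\alpha}(\overline{\Omega},X)$-norm (and $F$ in $C^{k+2,\alpha}$) by finite sums $\sum_j\psi_j\otimes y_j$, because the Cauchy argument in your last paragraph runs the higher-order Schauder estimate on the differences and therefore needs $\|g^{(n)}-g\|_{C^{k,\alpha}}\to0$, not just convergence in $C^k$ or in sup-norm. The tools you propose for this (partition of unity plus mollification, as in Lemma \ref{tensor} and Proposition \ref{local regularity}) cannot deliver it: mollification does not converge in the H\"older seminorm. Already for $X=\R$, $k=0$, the function $|t|^{\alpha}$ satisfies $[\,|t|^{\alpha}-\rho_h\ast|t|^{\alpha}\,]_{\alpha}\geq1$ for every $h$, since any $C^1$ approximant has vanishing $\alpha$-H\"older quotient at the origin while $|t|^{\alpha}$ does not. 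The closure of smooth (or, in the vector-valued case, finite-rank) functions in the H\"older norm is a proper closed subspace (the little H\"older space), so a general $g\in C^{k,\alpha}(\overline{\Omega},X)$ is simply not reachable by your approximants, and the scheme stalls before the limit can be taken. Weakening the approximation to $C^{k,\beta}$ with $\beta<\alpha$ (where mollification does converge) would only yield $u\in C^{k+2,\beta}$ for all $\beta<\alpha$, losing the endpoint. Your first paragraph (the vector-valued $C^{k+2,\alpha}$ a priori estimate via extremal functionals) is fine, but it cannot rescue the construction.

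The paper avoids approximation of the data altogether. It first invokes Proposition \ref{local regularity} to get $u\in C^{k+2}(\Omega,X)$ in the interior. Then, for each $x'\in X'$, the real-valued boundary regularity theorem gives $\langle u,x'\rangle\in C^{k+2,\alpha}(\overline{\Omega},\R)$; hence for any $(k+2)$-nd derivative $v$ of $u$ the quotients $\frac{v(\xi)-v(\eta)}{|\xi-\eta|^{\alpha}}$ are weakly bounded, and the Uniform Boundedness Principle upgrades this to norm boundedness, so $v\in C^{\alpha}(\overline{\Omega},X)$. This ``test with functionals and apply UBP'' step is the idea your proposal is missing; it replaces the unprovable density statement and is the same mechanism already used in Theorem \ref{KelloggSchauder} to recover the vector-valued H\"older seminorm from the scalar one. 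I would rewrite your proof along these lines.
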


\begin{proof}
Again the real-valued case is well known, see \cite[Theorem 6.19]{GilbargTrudingerEllipticpde}. Now let $X$ be arbitrary. From Proposition \ref{local regularity} we have that $u\in C^{k+2}(\Omega,X)$. If we can show that the $(k+2)-nd$ derivatives of $u$ are H\"older-continuous, the result follows easily. To see this, note that by the real-valued case we have that $\langle u,x'\rangle\in C^{k+2,\alpha}(\overline{\Omega},\R)$ for every $x'\in X'$. Let $v$ be any $(k+2)-nd$ derivative of $u$. The quotient $\frac{v(\xi)-v(\eta)}{|\xi-\eta|^\alpha}$ is weakly bounded and hence also bounded for all $\xi,\eta\in\overline{\Omega}$. Thus $v\in C^\alpha(\overline{\Omega},X)$.
\end{proof}

\section{Application: Heat Equation on Irregular Domains}

As a second application of our work we want to investigate the heat equation with Dirichlet boundary data

\begin{align*}
\begin{cases}
\frac{d}{dt}u=\Delta u\\
u(t,\cdot)\in C_0(\overline{\Omega},X)\textnormal{ for all }t>0\\
u(0,\cdot)=u_0.\\
\end{cases}
\end{align*}

This problem is not well-posed if the domain $\Omega$ is irregular for the Dirichlet problem, see \cite{ArendtBenilanWienerRegularity}. Hence we want to impose boundary conditions which incorporate the nature of the underlying Dirichlet problem. We consider the problem

\begin{align}\label{cauchy problem}
\begin{cases}
\frac{d}{dt}u=\Delta u\\
u(t,\cdot)\in C_b(\Omega,X)\textnormal{ for all }t>0\\
\lim_{\xi\rightarrow z,\xi\in\Omega}u(t,\xi)=0\textnormal{ for all regular }z\in\partial\Omega\textnormal{ and all }t>0\\
u(0,\cdot)=u_0.
\end{cases}
\end{align}

We will treat this problem as an abstract Cauchy problem. For an overview of holomorphic semigroups and their generators, we refer to \cite[Chapter 2]{ArendtSemigroupsandavolutionequations} and \cite[Chapter 3.7]{ArendtBattyHieberNeubranderVector-valuedlaplacetransformation}. Consider the Banach space
\begin{align*}
C_{b,0,\textnormal{reg}}(\Omega,X):=\{u\in C_b(\Omega,X),\lim_{\xi\rightarrow z}u(\xi)=0\textnormal{ for all }z\in\partial_\textnormal{reg}\Omega\}.
\end{align*}
On $C_{b,0,\textnormal{reg}}(\Omega,X)$ we consider the \emph{Perron-Dirichlet-Laplacian} $\Delta_{\textnormal{PD}}$ given by
\begin{align*}
D(\Delta_{\textnormal{PD}})&:=\{u\in C_{b,0,\textnormal{reg}}(\Omega,X), \Delta u\in C_{b,0,\textnormal{reg}}(\Omega,X)\}\\
\Delta_{\textnormal{PD}}u&:=\Delta u,
\end{align*}
i.e. $\Delta_{\textnormal{PD}}$ is the distributional Laplacian with maximal domain in $C_{b,0,\textnormal{reg}}(\Omega,X)$. The main theorem of this section is

\begin{theorem}\label{PD semigroup}
The operator $\Delta_{\textnormal{PD}}$ is the generator of a bounded holomorphic semigroup on $C_{b,0,\textnormal{reg}}(\Omega,X)$.
\end{theorem}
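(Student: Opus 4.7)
The plan is to reduce to the scalar case $X=\R$, proved in \cite{ArendtBenilanWienerRegularity}, and lift to the $X$-valued setting via a Bochner integral representation of the semigroup. For $X=\R$, $\Delta_{\textnormal{PD}}$ generates a bounded holomorphic positive sub-Markovian semigroup $(S(t))_{t\ge 0}$ on $C_{b,0,\textnormal{reg}}(\Omega,\R)$, representable as $(S(t)g)(\xi)=\int_{\Omega}p_t^{\Omega}(\xi,y)g(y)\,dy$ for a non-negative kernel $p_t^{\Omega}$ arising as the monotone limit of Dirichlet heat kernels on a Dirichlet-regular exhaustion $\omega_n \ssubset \omega_{n+1}$ of $\Omega$. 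This kernel satisfies $\int_{\Omega} p_t^{\Omega}(\xi,y)\,dy\le 1$, the integral tending to $0$ as $\xi\to z$ for every regular $z\in\partial\Omega$.

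For $f\in C_{b,0,\textnormal{reg}}(\Omega,X)$ I would define
\begin{align*}
(T(t)f)(\xi):=\int_{\Omega}p_t^{\Omega}(\xi,y)\,f(y)\,dy
\end{align*}
as a Bochner integral, convergent since $f$ is bounded and $p_t^{\Omega}(\xi,\cdot)\in L^1$. The pointwise estimate $\|(T(t)f)(\xi)\|\le\|f\|_\infty\int_{\Omega} p_t^{\Omega}(\xi,y)\,dy$ gives both $\|T(t)\|\le 1$ and, by the boundary decay of the kernel mass, the range inclusion $T(t)f\in C_{b,0,\textnormal{reg}}(\Omega,X)$. Testing against any $x'\in X'$ (which commutes with the Bochner integral) yields the compatibility relation $\langle T(t)f,x'\rangle=S(t)\langle f,x'\rangle$; the semigroup law and the sectorial resolvent estimates are inherited from $(S(t))$ scalar-wise, and strong holomorphy of $z\mapsto T(z)$ on the sector follows from weak holomorphy plus the uniform norm bound by Dunford's theorem.

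To identify the generator $A$ of $(T(t))$ with $\Delta_{\textnormal{PD}}$, note that if $f\in D(A)$ then $\langle f,x'\rangle\in D(\Delta_{\textnormal{PD}}^{\R})$ and $\Delta\langle f,x'\rangle=\langle Af,x'\rangle$ for every $x'\in X'$; since $X'$ separates points this is the distributional identity $\Delta f=Af$, whence $f\in D(\Delta_{\textnormal{PD}})$, and the reverse inclusion is analogous via the resolvent equation. The main obstacle I expect lies in the scalar input: besides the generation statement of \cite{ArendtBenilanWienerRegularity} one needs the integral-kernel representation together with the decay $\int p_t^{\Omega}(\xi,\cdot)\,dy\to 0$ at regular boundary points, and it is this latter property that enforces the $C_{b,0,\textnormal{reg}}$ boundary condition under the vector-valued extension. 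Once those are in hand the $X$-valued construction is essentially formal; a minor technical point is upgrading weak to strong holomorphy in $t$, which follows from Dunford's theorem combined with the uniform norm bound.
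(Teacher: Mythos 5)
Your route (lifting the scalar heat semigroup to $X$-valued functions through a Bochner integral against the Dirichlet heat kernel) is genuinely different from the paper's, which never constructs the semigroup explicitly: it proves $0\in\rho(\Delta_{\textnormal{PD}})$ via the Newtonian potential, establishes $m$-dissipativity from a maximum principle that sees only the regular boundary points (Corollary \ref{regmaxprinccor}), and then obtains the sectorial resolvent bound $\|R(\lambda,\Delta_{\textnormal{PD}})f\|_\infty\leq\frac{2M}{|\lambda|}\|f\|_\infty$ for $\Real\lambda>0$ by comparing $R(\lambda,\Delta_{\textnormal{PD}})f$ with $R(\lambda,\Delta_\infty)\tilde{f}$, the resolvent of the Gaussian semigroup on $L^\infty(\R^d,X)$ applied to the extension of $f$ by zero; the difference of the two is $\lambda$-harmonic and is controlled on $\partial_\textnormal{reg}\Omega$. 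That argument is uniform in $X$, so no scalar reduction is needed.

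The main gap in your proposal is the scalar input. What you attribute to \cite{ArendtBenilanWienerRegularity} --- that $\Delta_{\textnormal{PD}}$ generates a bounded holomorphic semigroup on $C_{b,0,\textnormal{reg}}(\Omega,\R)$ together with the kernel representation and the decay $\int_\Omega p_t^\Omega(\xi,y)\,dy\rightarrow0$ as $\xi\rightarrow z\in\partial_\textnormal{reg}\Omega$ --- is essentially the case $X=\R$ of the theorem itself. That reference is invoked in the paper only for the ill-posedness on $C_0(\overline{\Omega},X)$ over irregular domains, and the space $C_{b,0,\textnormal{reg}}$, with the boundary condition imposed only at regular points, is introduced here precisely because the classical realization fails; so you would still have to prove the scalar case, at which point the reduction buys nothing since the paper's argument is no harder for general $X$. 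A second, smaller gap is the generator identification: your argument shows $A\subseteq\Delta_{\textnormal{PD}}$ (every $f\in D(A)$ satisfies $\Delta f=Af$ distributionally), but to conclude $A=\Delta_{\textnormal{PD}}$ you must exclude that $\Delta_{\textnormal{PD}}$ is a proper extension of $A$, i.e.\ you need $\lambda-\Delta_{\textnormal{PD}}$ to be injective for some $\lambda\in\rho(A)$. This is exactly the uniqueness statement for $\lambda u-\Delta u=0$ with $u$ bounded and vanishing only at the regular boundary points, i.e.\ the content of Corollary \ref{regmaxprinccor}; ``analogous via the resolvent equation'' does not supply it.
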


\begin{corollary}
The problem (\ref{cauchy problem}) has a unique mild solution for all $u_0\in\overline{D(\Delta_\textnormal{PD})}$.
\end{corollary}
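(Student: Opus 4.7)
The plan is to apply standard abstract Cauchy problem theory with Theorem \ref{PD semigroup} as the essential input. Let $(T(t))_{t\geq 0}$ denote the bounded holomorphic semigroup on $C_{b,0,\textnormal{reg}}(\Omega,X)$ generated by $\Delta_{\textnormal{PD}}$. Although $D(\Delta_{\textnormal{PD}})$ need not be dense in $C_{b,0,\textnormal{reg}}(\Omega,X)$ in general, the restriction of any bounded holomorphic semigroup to the closure of the generator's domain is automatically strongly continuous at $t=0$. Consequently, $(T(t))_{t\geq 0}$ restricted to $\overline{D(\Delta_{\textnormal{PD}})}$ is a $C_0$-semigroup whose generator is the part of $\Delta_{\textnormal{PD}}$ in this closed subspace.

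Given $u_0 \in \overline{D(\Delta_{\textnormal{PD}})}$, I would define $u(t,\cdot) := T(t)u_0$ for $t \geq 0$ and verify directly that $u$ solves (\ref{cauchy problem}). The $C_0$-continuity gives $u \in C([0,\infty),\overline{D(\Delta_{\textnormal{PD}})})$, so for every $t \geq 0$ one has $u(t,\cdot) \in C_b(\Omega,X)$ and $\lim_{\xi \to z}u(t,\xi)=0$ at every regular $z\in\partial\Omega$, simply because this boundary behavior is built into the ambient space $C_{b,0,\textnormal{reg}}(\Omega,X)$; moreover $u(0,\cdot)=u_0$. Holomorphicity furthermore ensures $T(t)u_0 \in D(\Delta_{\textnormal{PD}})$ for every $t>0$, with $\frac{d}{dt}T(t)u_0 = \Delta_{\textnormal{PD}}T(t)u_0 = \Delta u(t,\cdot)$, so that $u$ is in fact a classical solution of (\ref{cauchy problem}) for $t>0$, and a fortiori a mild solution on $[0,\infty)$.

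Uniqueness is the standard uniqueness for the homogeneous abstract Cauchy problem associated to the generator of a $C_0$-semigroup: any mild solution $v$ with $v(0)=u_0$ must coincide with $T(\cdot)u_0$, since its Laplace transform is uniquely determined by the resolvent of $\Delta_{\textnormal{PD}}$ applied to $u_0$. No genuine obstacle arises in the proof, as all the analytic work has been absorbed into Theorem \ref{PD semigroup}; the only conceptual point worth emphasising is that the Perron-type boundary condition at regular points corresponds exactly to membership in the phase space $C_{b,0,\textnormal{reg}}(\Omega,X)$, and is therefore preserved by the semigroup orbits without any extra argument.
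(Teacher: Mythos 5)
Your argument is correct and is exactly the standard deduction the paper intends: the corollary is stated without proof as an immediate consequence of Theorem \ref{PD semigroup} via the theory of bounded holomorphic semigroups with non-densely defined generators (cf.\ the cited Chapter 3.7 of Arendt--Batty--Hieber--Neubrander), which is precisely what you spell out, including the key point that strong continuity at $t=0$ holds on $\overline{D(\Delta_{\textnormal{PD}})}$ and uniqueness follows from the Laplace transform representation of mild solutions.
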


We will need some preparations for the proof.

\begin{lemma}
$\Delta_{\textnormal{PD}}$ is closed and $0\in\rho(\Delta_{\textnormal{PD}})$.
\end{lemma}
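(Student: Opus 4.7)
The plan is to establish $0\in\rho(\Delta_{\textnormal{PD}})$ by exhibiting a bounded inverse explicitly; once this is done, closedness of $\Delta_{\textnormal{PD}}$ is automatic, since the inverse of a bounded bijection between Banach spaces is closed. I would split the argument into injectivity, explicit solvability via a Newtonian potential plus a Perron solution, and a norm estimate.

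For injectivity, suppose $u\in D(\Delta_{\textnormal{PD}})$ satisfies $\Delta u=0$ distributionally. For every $x'\in X'$ the scalar function $\langle u,x'\rangle$ is then a distributional harmonic function, hence classically harmonic by Weyl's lemma. Theorem \ref{Arendt}(\ref{Arendt very weak}) lifts this to harmonicity of $u$ itself, and since $u$ is bounded and tends to $0$ at each regular boundary point, the uniqueness part of Theorem \ref{Perronsolution EnU} forces $u=0$.

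For surjectivity, given $g\in C_{b,0,\textnormal{reg}}(\Omega,X)$ I would extend $g$ by zero to $\R^d$ and form the vector-valued Newtonian potential $w:=\Gamma\ast g$. Local integrability of $\Gamma$ together with the compact support of $g$ ensures that $w\in C(\R^d,X)$ and delivers the pointwise bound $\|w\|_\infty\leq C(\Omega,d)\|g\|_\infty$; testing against $\Cci(\Omega)$ functions yields $\Delta w=g$ in the distributional sense on $\Omega$. Since $w_{|\partial\Omega}$ is continuous, Theorem \ref{Perronsolution EnU} produces the Perron solution $h:=H_{-w_{|\partial\Omega}}$, which is bounded, harmonic, and satisfies $\lim_{\xi\to z}h(\xi)=-w(z)$ at every regular $z\in\partial\Omega$. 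Setting $u:=w+h$ then yields a bounded continuous function on $\Omega$ with $\Delta u=g$ distributionally and $\lim_{\xi\to z}u(\xi)=w(z)-w(z)=0$ at each regular boundary point, so $u\in D(\Delta_{\textnormal{PD}})$ and $\Delta_{\textnormal{PD}}u=g$. The harmonic measure representation (or equivalently Proposition \ref{weakmax} combined with Remark \ref{weak remark}) gives $\|h\|_\infty\leq\|w\|_\infty$, whence $\|u\|_\infty\leq 2C(\Omega,d)\|g\|_\infty$, which is precisely the bound on $\Delta_{\textnormal{PD}}^{-1}$.

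The main point where I expect to need a little care is that $g$ need not be H\"older continuous, so $w$ may fail to lie in $C^2(\Omega,X)$ and the identity $\Delta w=g$ only holds distributionally. This is exactly why the definition of $D(\Delta_{\textnormal{PD}})$ uses the distributional Laplacian; the scalar Weyl lemma combined with the Hahn-Banach style reduction in Theorem \ref{Arendt}(\ref{Arendt very weak}) is what keeps the injectivity step rigorous, and it is essentially the only subtle part of the argument.
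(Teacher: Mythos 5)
Your proposal is correct and follows essentially the same route as the paper: injectivity from the uniqueness of the Perron solution, and surjectivity by writing $u=w+H_{-w_{|\partial\Omega}}$ with $w$ the Newtonian potential of the zero extension of $g$. The only difference is organizational --- you obtain $0\in\rho(\Delta_{\textnormal{PD}})$ from an explicit norm bound on the solution operator and deduce closedness from that, whereas the paper checks closedness directly and then gets boundedness of the inverse from bijectivity; both are valid.
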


\begin{proof}
One easily sees that $\Delta_{\textnormal{PD}}$ is a closed operator. From the uniqueness of the Perron-solution it follows that $\Delta_{\textnormal{PD}}$ is injective. It remains to show that $\Delta_{\textnormal{PD}}$ is surjective. Let $v\in C_{b,0,\textnormal{reg}}(\Omega,X)$ and let $\tilde{v}$ be its extension to $\R^d$ by $0$. As in the scalar-valued case, see e.g. \cite[II,\S3]{DautrayLionsMathematicalAnalysis} one sees that the Newtonian potential $w$ of $\tilde{v}$ is continuous and satisfies $\Delta w=\tilde{v}$ in the sense of distributions. Let $\varphi:=w_{|\partial\Omega}$, then $u:=H_{-\varphi}+w\in C_{b,0,\textnormal{reg}}(\Omega,X)$ satisfies $\Delta u=v$.
\end{proof}

For the $m$-dissipativity of the Perron-Dirichlet-Laplacian, we will need a stronger maximum principle.

\begin{proposition}\cite[Theorem 5.2.6 (i)]{ArmitageGardinerPotentialTheory}\label{regularmaximumprinciple}
Let $u\in C_b(\Omega\cup \partial_\textnormal{reg}\Omega,\R)$ be subharmonic. Then $\sup_{\xi\in\Omega}u(\xi)=\sup_{z\in\partial_\textnormal{reg}\Omega}u(z)$. In particular if $u$ is even harmonic, then $\inf_{z\in\partial_\textnormal{reg}\Omega}u(z)\leq u(\xi)\leq\sup_{z\in\partial_\textnormal{reg}\Omega}u(z)$.
\end{proposition}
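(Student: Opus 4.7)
First I would handle the easier inequality $\sup_{z\in\partial_\textnormal{reg}\Omega} u(z) \leq \sup_{\xi \in \Omega} u(\xi)$, which is immediate from the continuity of $u$ on $\Omega \cup \partial_\textnormal{reg}\Omega$: for every $z \in \partial_\textnormal{reg}\Omega$ one picks a sequence $(\xi_n) \subset \Omega$ with $\xi_n \to z$ and obtains $u(z) = \lim u(\xi_n) \leq \sup_\Omega u$. The real content is in the reverse inequality. Setting $M := \sup_{z \in \partial_\textnormal{reg}\Omega} u(z)$, which will be finite since $u$ is bounded and $\partial_\textnormal{reg}\Omega$ is nonempty (being $\partial\Omega$ minus a polar set by Kellogg's theorem, while $\partial\Omega$ itself is not polar because $\Omega$ is bounded), one needs to show $u(\xi_0) \leq M$ for every $\xi_0 \in \Omega$.

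The plan is to perturb $u$ by a superharmonic function that blows up along the polar set $E := \partial\Omega \setminus \partial_\textnormal{reg}\Omega$ of irregular points, so as to apply the classical maximum principle uniformly at every single point of $\partial\Omega$. Since $E$ is polar, standard potential theory supplies a nonnegative superharmonic function $s$ defined on a neighborhood of $\overline{\Omega}$ with $s(\xi) < \infty$ for every $\xi \in \Omega$ and $\liminf_{\xi \to z} s(\xi) = +\infty$ for every $z \in E$; concretely, one constructs such an $s$ as a Green potential on a large enclosing ball of an appropriate positive measure supported near $E$. For any $\varepsilon > 0$ I would then set $v_\varepsilon := u - \varepsilon s$ on $\Omega$. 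This function is subharmonic (sum of the subharmonic $u$ and $-\varepsilon s$), bounded above (since $u$ is bounded and $s \geq 0$), and satisfies $\limsup_{\xi \to z} v_\varepsilon(\xi) \leq M$ at every $z \in \partial\Omega$: for regular $z$ this follows from the continuity of $u$ and the sign of $s$, while for $z \in E$ the blow-up of $s$ against the bounded $u$ forces the $\limsup$ to equal $-\infty$.

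The classical Phragm\'en-Lindel\"of-type maximum principle for upper semicontinuous subharmonic functions that are bounded above on a bounded domain then yields $v_\varepsilon \leq M$ throughout $\Omega$, i.e., $u(\xi) \leq M + \varepsilon s(\xi)$ for every $\xi \in \Omega$. Fixing $\xi_0$ (at which $s(\xi_0) < \infty$) and letting $\varepsilon \to 0$ gives $u(\xi_0) \leq M$, as required. The ``in particular'' clause for harmonic $u$ follows by applying the already-established inequality both to $u$ and to $-u$, the latter being harmonic as well.

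The main obstacle is the construction of the barrier $s$ with the prescribed blow-up behavior on the polar set of irregular points: this rests on Kellogg's theorem on the polarity of the irregular set together with the existence of superharmonic functions singular precisely on a prescribed polar set. Both are classical but non-elementary inputs from potential theory, and are essentially the content of the proof in Armitage-Gardiner which the proposition cites. Once $s$ has been produced, the remainder of the argument reduces to routine invocations of the real-valued maximum principle and the boundedness hypothesis on $u$.
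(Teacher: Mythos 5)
Your argument is correct; the paper gives no proof of this proposition but simply cites Armitage--Gardiner, and your barrier construction via a superharmonic function blowing up on the polar set of irregular boundary points is precisely the standard proof of that cited result. The only non-elementary inputs --- Kellogg's theorem on the polarity of the irregular set, and the existence of a potential that is $+\infty$ on a prescribed polar set yet finite at a prescribed point of $\Omega$ --- are exactly the ones you identify, and the rest (perturbing by $\varepsilon s$, applying the classical maximum principle at every boundary point, letting $\varepsilon\to 0$) is carried out correctly.
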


\begin{corollary}\label{regmaxprinccor}
Let $u\in C_b(\Omega\cup \partial_\textnormal{reg}\Omega,\R)$. Suppose that there exist $\lambda,M\geq0$ such that $\Delta u-\lambda u\geq0$ (in the sense of distributions) and $u\leq M$ on $\partial_\textnormal{reg}\Omega$. Then $u\leq M$.
\end{corollary}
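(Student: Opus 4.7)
The plan is to reduce to the regular maximum principle, Proposition~\ref{regularmaximumprinciple}, applied to the auxiliary function $v := (u-M)^+$. This absorbs both the zero-order term $-\lambda u$ and the boundary constant $M$ into a single subharmonic function, and the signs $\lambda\geq 0$, $M\geq 0$ enter exactly where they are needed. Since $u\leq M$ on $\partial_\textnormal{reg}\Omega$, the function $v$ lies in $C_b(\Omega\cup\partial_\textnormal{reg}\Omega,\R)$ with $v\equiv 0$ on $\partial_\textnormal{reg}\Omega$. If I can show $v$ is subharmonic on $\Omega$, Proposition~\ref{regularmaximumprinciple} immediately yields
\begin{align*}
\sup_{\xi\in\Omega} v(\xi)=\sup_{z\in\partial_\textnormal{reg}\Omega} v(z)=0,
\end{align*}
so $v\equiv 0$ and $u\leq M$.

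To establish subharmonicity of $v$, I introduce the open set $\Omega^\ast:=\{\xi\in\Omega: u(\xi)>M\}$. On $\Omega^\ast$ the inequalities $u>M\geq 0$ and $\lambda\geq 0$ force $\lambda u\geq 0$, so the distributional hypothesis $\Delta u\geq \lambda u$ restricts to $\Delta u\geq 0$ on $\Omega^\ast$. By the standard equivalence for continuous functions, $u$ (and hence $u-M$) is classically subharmonic on $\Omega^\ast$. I then check the sub-mean-value inequality for $v$ pointwise at an arbitrary $\xi\in\Omega$ and every sufficiently small $r$: if $\xi\in\Omega^\ast$, continuity of $u$ lets me choose $r$ with $B(\xi,r)\subset\Omega^\ast$, on which $v$ coincides with the subharmonic function $u-M$, so the inequality $v(\xi)\leq\avgint_{\partial B(\xi,r)} v\, d\sigma_{d-1}$ transfers directly; if $\xi\notin\Omega^\ast$, then $v(\xi)=0$ and the inequality is trivial since $v\geq 0$.

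The main conceptual obstacle is precisely the subharmonicity of the positive part $v=(u-M)^+$, a fact which normally invokes Kato's inequality---a tool that has not been set up in the paper. The pointwise mean-value argument above sidesteps Kato's machinery and places the hypotheses $\lambda\geq 0$ and $M\geq 0$ at exactly the point where $v$ is strictly positive, which is where the unfavourable sign of the coupling $-\lambda u$ needs to disappear. Once subharmonicity is in hand, the remainder is a direct application of the regular maximum principle quoted above.
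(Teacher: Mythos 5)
Your proof is correct and follows essentially the same route as the paper's: the paper truncates at $0$ (working with $v=u^+$, shown subharmonic by the same pointwise case analysis on $\{u\geq 0\}$ versus its complement, and then noting $u\leq v\leq M$ on $\partial_{\textnormal{reg}}\Omega$ because $M\geq 0$), whereas you truncate at $M$ and work with $(u-M)^+$. Both arguments hinge on the identical observations — $\lambda u\geq 0$ where the truncated function is positive, and the sub-mean-value inequality is trivial where it vanishes — followed by the regular maximum principle, so no Kato-type inequality is needed in either version.
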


\begin{proof}
Let $\omega:=\{\xi\in\Omega,u(\xi)\geq0\}$ and define
\begin{align*}
v(\xi):=\begin{cases}
u(\xi),&\textnormal{ if }\xi\in\omega\\
0,&\textnormal{ otherwise.}
\end{cases}
\end{align*}
In $\omega$ we have
\begin{align*}
\Delta u\geq\Delta u-\lambda u\geq 0,
\end{align*}
hence $u$ is subharmonic in $\omega$. Moreover, $v$ is constant in $(\omega^c)^\circ$ and hence subharmonic. On the boundary $\partial\omega$ we have that $v$ is constantly zero and hence $v(z)\leq\avgint_{B(z,r)}{v}$ for all $z\in\partial\omega$ and $r>0$ sufficiently small, showing that $v$ is subharmonic in $\Omega$. Note that $v$ can be continuously extended on the regular boundary points of $\Omega$ and that it satisfies $v\leq M$ on $\partial_{\textnormal{reg}}\Omega$. Hence by Proposition \ref{regularmaximumprinciple}, we have that
\begin{align*}
u\leq v\leq M.
\end{align*}
\end{proof}

\begin{proposition}
$\Delta_{\textnormal{PD}}$ is $m$-dissipative.
\end{proposition}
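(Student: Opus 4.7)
The plan is to verify the two defining properties of $m$-dissipativity separately: first the estimate $\lambda\|u\|_\infty\leq\|\lambda u-\Delta u\|_\infty$ for all $u\in D(\Delta_\textnormal{PD})$ and every $\lambda>0$, and then surjectivity of $\lambda-\Delta_\textnormal{PD}$ for at least one $\lambda>0$; the standard Lumer--Phillips-type criterion then upgrades this to surjectivity for all $\lambda>0$ and yields $m$-dissipativity.

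For the dissipativity inequality, I would fix $u\in D(\Delta_\textnormal{PD})$ and $\lambda>0$ and set $g:=\lambda u-\Delta u$. For any $x'\in X'$ with $\|x'\|\leq 1$ the real-valued function $v:=\langle u,x'\rangle$ lies in $C_b(\Omega\cup\partial_\textnormal{reg}\Omega,\R)$, vanishes on $\partial_\textnormal{reg}\Omega$, and satisfies $\lambda v-\Delta v=\langle g,x'\rangle\leq\|g\|_\infty$ in the distributional sense. Consequently $\tilde v:=v-\|g\|_\infty/\lambda$ fulfils $\Delta\tilde v-\lambda\tilde v\geq 0$ in $\Omega$ and $\tilde v\leq 0$ on $\partial_\textnormal{reg}\Omega$, so Corollary \ref{regmaxprinccor} (applied with boundary bound $0$) gives $v\leq\|g\|_\infty/\lambda$. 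Repeating the argument for $-x'$ in place of $x'$ produces the matching lower bound, so $|\langle u(\xi),x'\rangle|\leq\|g\|_\infty/\lambda$ for every $\xi\in\Omega$. Taking the supremum over $\|x'\|\leq 1$ and using $\|u(\xi)\|=\sup_{\|x'\|\leq 1}|\langle u(\xi),x'\rangle|$ (Hahn--Banach) delivers the dissipativity estimate.

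For the range condition I would exploit the preceding lemma, which gives $\Delta_\textnormal{PD}^{-1}\in\mathcal{L}(C_{b,0,\textnormal{reg}}(\Omega,X))$. For any $v$ in the underlying space, the substitution $w:=-\Delta_\textnormal{PD}u$ turns the equation $(\lambda-\Delta_\textnormal{PD})u=v$ into $(I-\lambda\Delta_\textnormal{PD}^{-1})w=v$, so whenever $0<\lambda<\|\Delta_\textnormal{PD}^{-1}\|^{-1}$ a Neumann series inverts $I-\lambda\Delta_\textnormal{PD}^{-1}$, producing $w\in C_{b,0,\textnormal{reg}}(\Omega,X)$ and hence $u:=-\Delta_\textnormal{PD}^{-1}w\in D(\Delta_\textnormal{PD})$ solving the original equation. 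The subtlety to watch for is that Corollary \ref{regmaxprinccor} requires a nonnegative boundary bound, which is why I phrase the dissipativity step around $\tilde v\leq 0$ on $\partial_\textnormal{reg}\Omega$ rather than the seemingly stronger $\tilde v\leq-\|g\|_\infty/\lambda$; once this is respected, no further obstacle remains.
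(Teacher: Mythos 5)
Your proof is correct and follows essentially the same route as the paper: dissipativity is obtained by testing with functionals $x'$ in the unit ball and applying Corollary \ref{regmaxprinccor} to the shifted scalar function $\langle u,x'\rangle-\|g\|_\infty/\lambda$, and the range condition is deduced from $0\in\rho(\Delta_{\textnormal{PD}})$. Your Neumann-series step merely makes explicit what the paper leaves implicit in passing from $0\in\rho(\Delta_{\textnormal{PD}})$ to $m$-dissipativity.
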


\begin{proof}
Let $t>0, u\in D(\Delta_{\textnormal{PD}})$ and define $M:=\|u-t\Delta_{\textnormal{PD}}u\|_\infty$. For $x'\in B_{X'}(0,1)$ let $v:=\Real\langle u,x'\rangle$. Then
\begin{align*}
(v-M)-t\Delta(v-M)\leq0
\end{align*}
and $v-M\leq 0$ on $\partial_\textnormal{reg}\Omega$. Corollary \ref{regmaxprinccor} implies that $v\leq M$. It follows that
\begin{align*}
\|u\|_\infty\leq\|u-t\Delta_{\textnormal{PD}}u\|_\infty,
\end{align*}
and hence $\Delta_{\textnormal{PD}}$ is dissipative. We know that $0\in\rho(\Delta_{\textnormal{PD}})$, thus $\Delta_{\textnormal{PD}}$ is $m$-dissipative. 
\end{proof}

\begin{proposition}
The function $z\mapsto G(z)\in\mathcal{L}(L^\infty(\R^d,X))$ where
\begin{align*}
(G(z)f)(\xi):=(4\pi z)^{-d/2}\int_{\R^d}{f(\psi)\exp(-(\xi-\psi)^2/4z)\,d\psi}
\end{align*}
for all $f\in L^\infty(\R^d,X),\xi\in\R^d$, is a bounded holomorphic semigroup on $\Sigma_\theta$ for every $\theta<\pi/2$. Its generator is the distributional Laplacian with maximal domain in $L^\infty(\R^d,X)$, that is
\begin{align*}
D(\Delta_\infty)&:=\{f\in L^\infty(\R^d,X),\Delta f\in L^\infty(\R^d,X)\}\\
\Delta_\infty f&:=\Delta f.
\end{align*}
\end{proposition}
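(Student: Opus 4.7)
My plan is to verify the statement in three stages---boundedness together with the semigroup law, holomorphy in $z$, and identification of the generator---reducing vector-valued claims to the scalar case by duality, using that $X'$ separates $X$. The complex Gaussian kernel $k_z(\xi):=(4\pi z)^{-d/2}\exp(-|\xi|^2/(4z))$ satisfies $|k_z(\xi)|=(4\pi|z|)^{-d/2}\exp(-\Real(z)|\xi|^2/(4|z|^2))$, and a Gaussian integration yields $\|k_z\|_{L^1(\R^d)}=(|z|/\Real(z))^{d/2}\leq(\cos\theta)^{-d/2}$ on $\Sigma_\theta$. Since $G(z)f=k_z\ast f$, Young's inequality gives $\|G(z)f\|_\infty\leq(\cos\theta)^{-d/2}\|f\|_\infty$, yielding the desired uniform bound on each subsector. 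For the semigroup identity, I would apply $x'\in X'$ to obtain $\langle G(z)f,x'\rangle=k_z\ast\langle f,x'\rangle$; the scalar identity $k_{z_1}\ast k_{z_2}=k_{z_1+z_2}$ holds by direct computation for $z_i>0$ and extends to $\Sigma_{\pi/2}$ by analytic continuation, after which separation of $X$ by $X'$ yields $G(z_1+z_2)=G(z_1)G(z_2)$.

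For holomorphy, I would show that the difference quotient $(G(z+h)f-G(z)f)/h$ converges in $L^\infty(\R^d,X)$ to the convolution operator with kernel $\partial_z k_z$. Pointwise convergence in $\xi$ follows from differentiation under the integral sign, and uniformity in $\xi$ follows because $\partial_z k_z$ is again a Gaussian-type kernel whose $L^1$-norm is locally uniformly bounded on $\Sigma_\theta$. This upgrades pointwise holomorphy of $(G(z)f)(\xi)$ to norm holomorphy of $z\mapsto G(z)f$ into $L^\infty(\R^d,X)$.

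For the generator I would set $R(\lambda)f:=\int_0^\infty e^{-\lambda t}G(t)f\,dt$ for $\lambda>0$, a Bochner integral converging absolutely since $\|G(t)\|\leq1$. By the general theory of bounded holomorphic semigroups this equals $R(\lambda,A)$, where $A$ is the generator of $G$. Using that $G(t)f$ is smooth and satisfies $\partial_tG(t)f=\Delta G(t)f$ for $t>0$, integration by parts gives $\Delta R(\lambda)f=\lambda R(\lambda)f-f$ distributionally, so $R(\lambda)f\in D(\Delta_\infty)$ and $(\lambda-\Delta_\infty)R(\lambda)=I$. Conversely, if $u\in D(\Delta_\infty)$ with $\Delta u=\lambda u$, then $\partial_tG(t)u=G(t)\Delta u=\lambda G(t)u$ forces $G(t)u=e^{\lambda t}u$, and the bound $\|G(t)u\|_\infty\leq\|u\|_\infty$ yields $u=0$. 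Hence $\lambda-\Delta_\infty$ is a bijection with inverse $R(\lambda)$, so $A=\Delta_\infty$.

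The most delicate step I expect is the norm holomorphy, since $L^\infty$ lacks the Radon--Nikod\'ym property and vector-valued convergence theorems cannot be invoked as freely as in reflexive spaces. The key will be to convert the pointwise differentiability of the kernel into uniform-in-$\xi$ estimates on the difference quotient through the uniform $L^1$-bound on $\partial_z k_z$; once this is in place, the rest of the argument is essentially a bookkeeping exercise built on top of the scalar-valued Gaussian semigroup.
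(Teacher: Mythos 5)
Your proposal is correct, but it takes a noticeably more hands-on route than the paper. The paper treats the scalar Gau{\ss}--Weierstrass semigroup as a black box and reduces everything to it by duality: holomorphy is obtained by testing $G(\cdot)$ against the separating family $T\mapsto\langle x'\circ Tf,\mu\rangle$ and invoking Arendt's weak-to-strong holomorphy theorem (Theorem 2.1 of \cite{ArendtVector-valuedharmonic}) for locally bounded operator-valued functions, and the generator is identified by applying $x'\in X'$ under the Laplace transform $R(\lambda)$ and quoting the scalar identity $(\lambda-\Delta)R(\lambda)\langle f,x'\rangle=\langle f,x'\rangle$. You instead prove holomorphy directly from kernel estimates (the bound $\|k_z\|_{L^1}=(|z|/\Real z)^{d/2}\leq(\cos\theta)^{-d/2}$, which is sharper and more honest than the paper's claim that $\|G(\cdot)\|\leq1$ on the sector, and the $L^1$-convergence of the difference quotients of $k_z$), and you identify the generator by a distributional integration by parts plus a direct ODE argument for injectivity, where the paper again just tests with $x'$. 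Your approach buys self-containedness and an explicit constant; its cost is that the one genuinely delicate step -- upgrading pointwise differentiation under the integral to convergence of $(k_{z+h}-k_z)/h$ to $\partial_zk_z$ \emph{in $L^1(\R^d)$}, which is what the uniform-in-$\xi$ estimate really requires -- needs one more ingredient than you state (a locally uniform $L^1$-bound on the \emph{second} $z$-derivative, or equivalently the observation that $z\mapsto k_z$ is holomorphic as an $L^1$-valued map, after which $k\mapsto k\ast f$ transports holomorphy for free). That is routine to supply, so I would not call it a gap, but it is where the actual work in your version lives; the paper's duality argument sidesteps it entirely at the price of relying on the weak-to-strong machinery.
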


\begin{proof}
The case $X=\C$ is well known, see e.g. \cite[2.4]{ArendtSemigroupsandavolutionequations}. For the general case note that $\|G(\cdot)\|$ is obviously bounded by 1 and that testing with $x'\in X'$ shows that $G$ is a semigroup. The functionals on $\mathcal{L}(L^\infty(\R^d,x))$ given by $T\mapsto \langle x'\circ Tf,\mu\rangle$ where $f\in L^\infty(\R^d,X), x'\in X'$ and $\mu\in L^\infty(\R^d,\R)'$ are separating. Since for scalar-valued functions the semigroup $G$ is holomorphic, it follows that $z\mapsto \langle x'\circ G(z)f,\mu\rangle=\langle G(z)(x'\circ f),\mu\rangle$ is holomorphic. By \cite[Theorem 2.1]{ArendtVector-valuedharmonic} the semigroup $G$ is holomorphic on $X$-valued functions as well. To identify the generator we fix $\lambda>0=\omega(G)$, $f\in L^\infty(\R^d,X)$  and define
\begin{align*}
R(\lambda)f:=\int_0^\infty{e^{-\lambda t}G(t)f\,dt}.
\end{align*}
Note that $\langle \Delta h,x'\rangle=\Delta\langle h,x'\rangle$ holds for every $h\in L^1_\textnormal{loc}(\R^d,X)$ and every $x'\in X'$ and that closed operators commute with integration. Thus testing with $x'$ shows that $(\lambda-\Delta_\infty)R(\lambda)f=f$ and that $\lambda-\Delta_\infty$ is injective. Since $\Delta_\infty$ is obviously closed we obtain $\lambda\in\rho(\Delta_\infty)$ and $R(\lambda,\Delta_\infty)=R(\lambda)$. This shows that $\Delta_\infty$ is the generator of $G(\cdot)$.
\end{proof}

\begin{proof}[Proof of Theorem \ref{PD semigroup}]
Since $\Delta_\infty$ generates a bounded holomorphic semigroup it follows that $\|R(\lambda,\Delta_\infty)\|\leq \frac{M}{\lambda}$ for some $M\geq0$ whenever $\Real{\lambda}>0$. Let $\Real{\lambda}>0$, then by $m$-dissipativity $\lambda\in\rho(\Delta_{\textnormal{PD}})$. Let $f\in C_{b,0,\textnormal{reg}}(\Omega,X)$ and denote by $\tilde{f}\in L^\infty(\R^d,X)$ the extension of $f$ by $0$. Let $\tilde{g}:=R(\lambda,\Delta_\infty)\tilde{f}$ and $g:=R(\lambda,\Delta_{\textnormal{PD}})f$. As in the real-valued case, c.f. \cite[II,\S3]{DautrayLionsMathematicalAnalysis}, we see that $\tilde{g}\in C(\R^d,X)$ and hence $v:=g-\tilde{g}\in C_b(\Omega\cup\partial_\textnormal{reg}\Omega,X)$ satisfies $\lambda v-\Delta v=0$ and $v=-\tilde{g}$ on $\partial_\textnormal{reg}\Omega$. The Regular Maximum Principle shows that
\begin{align*}
\|v\|_\infty\leq\sup_{\partial_\textnormal{reg}\Omega}\|\tilde{g}\|\leq\frac{M}{\lambda}\|f\|_\infty
\end{align*}
and hence
\begin{align*}
\|g\|_\infty\leq\frac{2M}{\lambda}\|f\|_\infty,
\end{align*}
which finishes the proof.
\end{proof}

\begin{remark}
\begin{compactenum}[(a)]
\item If $\Omega$ is a regular domain, then $C_{b,0,\textnormal{reg}}(\Omega,X)$ is nothing but $C_0(\Omega,X)$. It follows from the density of the test functions that the semigroup generated by $\Delta_\textnormal{PD}$ is strongly continuous.
\item In general, the semigroup generated by $\Delta_\textnormal{PD}$ is not strongly continuous. Indeed, consider $d=2$, $\Omega=B(0,1)\backslash\{0\}$ and $X=\R$. We show that every $u\in D(\Delta_\textnormal{PD})$ can be continuously extended to $B(0,1)$ which implies that $D(\Delta_\textnormal{PD})$ is not dense in $C_{b,0,\textnormal{reg}}(\Omega,\R)$. To see this consider a function $u\in D(\Delta_\textnormal{PD})$. We may extend $u$ to $\R^2$ by $0$ outside of $\Omega$ and consider the tempered distribution $T_u$ defined via $\left\langle T_u,\varphi\right\rangle=\int_{\R^2}{u\varphi}$ and proceed analogously for $\Delta u$. It follows that the distribution $\Delta T_u-T_{\Delta u}$ is a distribution of order at most $2$ which is supported in $\{0\}$, i.e. $\Delta T_u-T_{\Delta u}=\sum_{|\alpha|\leq2}{a_\alpha\partial^\alpha\delta_0}$. Let $v$ and $w$ be the solutions to $\Delta v=T_{\Delta u}$ and $\Delta w=\sum_{|\alpha|\leq2}{a_\alpha\partial^\alpha\delta_0}$, then $T_u=v+w$ up to a perturbation by a harmonic function. By elliptic regularity we find that $v$ is continuous even in $0$. Further $w$ -- up to a perturbation by an analytic function -- is given by
\begin{align*}
w(x,y)=\ &a_0\log r+a_1\frac{x}{r^2}+a_2\frac{y}{r^2}\\
&+a_{11}\frac{r^2-2x^2}{r^4}+a_{22}\frac{r^2-2y^2}{r^4}+a_{12}\frac{-2xy}{r^4},
\end{align*}
where $r=\sqrt{x^2+y^2}$. It is easy to see, that either $w$ is unbounded or $w=0$. Since $u$ is bounded, we obtain that the latter is true and hence $u$ can be continuously extended in $\{0\}$.
\end{compactenum}
\end{remark}

\begin{appendix}
\section{The vector lattice $\mathcal{H}_b$}\label{harmonic lattice appendix}
An ordered vector space which has the interpolation property is automatically an order complete vector lattice, i.e. every set bounded from above has a supremum. To see this let $A$ be a set bounded from above and $B$ be the set of its upper bounds. Since $a\leq b$ for all $a\in A, b\in B$ the interpolation property yields an element $w$ satisfying $a\leq w\leq b$ for all $a\in A, b\in B$, i.e. $w$ is the least upper bound of $A$. In particular in the setting of Lemma \ref{interpolation property lemma} the vector space $\mathcal{H}_b(\Omega,X)$ is an order complete vector lattice with respect to the order on $C(\Omega,X)$. Note however that it is not a sublattice of $C(\Omega,X)$ since the pointwise supremum of two harmonic functions is not harmonic in general. For two vectors $u,v\in\mathcal{H}_b(\Omega,\R)$ we denote their supremum in the vector lattice $\mathcal{H}_b(\Omega,\R)$ by $u\vee_\mathcal{H}v$. In the case $X=\R$, the fact that $\mathcal{H}_b(\Omega,\R)$ is an order complete vector lattice can also be found in a more general form in \cite[Part IV, Theorem 11]{BrelotLecturesPotentialTheory}. Moreover:

\begin{theorem}\label{harmonicfunctionsareabanachlattice}
$\mathcal{H}_b(\Omega,\R)$ is an order complete Banach lattice.
\end{theorem}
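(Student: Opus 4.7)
The plan is straightforward: since the paragraphs preceding the theorem already establish that $\mathcal{H}_b(\Omega,\R)$ is an order complete vector lattice under the order inherited from $C(\Omega,\R)$, I only have to verify two remaining points, namely that $\mathcal{H}_b(\Omega,\R)$ is complete with respect to the supremum norm $\|\cdot\|_\infty$ and that this norm is a Riesz norm, i.e.\ $|u|_\mathcal{H} \leq |v|_\mathcal{H}$ implies $\|u\|_\infty \leq \|v\|_\infty$, where $|u|_\mathcal{H} := u \vee_\mathcal{H} (-u)$ denotes the modulus in the vector lattice $\mathcal{H}_b(\Omega,\R)$.

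For completeness I would observe that $\mathcal{H}_b(\Omega,\R)$ is a closed subspace of the Banach space $C_b(\Omega,\R)$: a uniformly convergent sequence of bounded harmonic functions has a harmonic limit, which is immediate from Theorem \ref{Arendt}(\ref{Arendt nets}) (or directly from Poisson's Integral Formula in Theorem \ref{Arendt}(\ref{Arendt Poisson})). Completeness of $\mathcal{H}_b(\Omega,\R)$ under $\|\cdot\|_\infty$ follows.

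The crucial point is the Riesz norm inequality, and the key observation is that, although the lattice operations on $\mathcal{H}_b(\Omega,\R)$ are \emph{not} the pointwise ones, one still has the two-sided sandwich
\[
|u(\xi)| \;\leq\; |u|_\mathcal{H}(\xi) \;\leq\; \|u\|_\infty \qquad (\xi \in \Omega).
\]
The left inequality holds because the lattice order on $\mathcal{H}_b(\Omega,\R)$ is the restriction of the pointwise order, so $|u|_\mathcal{H}$ is pointwise an upper bound for both $u$ and $-u$. The right inequality follows because the constant function $\|u\|_\infty$ is itself harmonic and bounded, pointwise dominates $u$ and $-u$, and therefore dominates their supremum $|u|_\mathcal{H}$ in the lattice $\mathcal{H}_b(\Omega,\R)$. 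Taking the supremum over $\xi$ yields the identity $\|u\|_\infty = \||u|_\mathcal{H}\|_\infty$. If now $|u|_\mathcal{H} \leq |v|_\mathcal{H}$, this inequality holds pointwise as well, so $\||u|_\mathcal{H}\|_\infty \leq \||v|_\mathcal{H}\|_\infty$, and therefore $\|u\|_\infty \leq \|v\|_\infty$, which is the desired Riesz property.

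I do not expect any real obstacle here: the only subtlety is the distinction between the lattice modulus $|u|_\mathcal{H}$ and the pointwise modulus $|u|$, which could in principle spoil the Riesz inequality. Using constants as harmonic majorants sandwiches the two moduli together in norm and reduces the Banach lattice property to the monotonicity of the sup norm under the (pointwise) order, which is immediate.
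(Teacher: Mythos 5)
Your argument is correct, and it reaches the crucial identity $\|\,|u|_\mathcal{H}\,\|_\infty=\|u\|_\infty$ by a genuinely different and more economical route than the paper. The paper first proves Lemma \ref{harmonicsupremumconstruction} and Corollary \ref{harmonicsupremumconstructioncorollary}, realizing $|u|_\mathcal{H}$ as the locally uniform limit of the solutions $h_n$ of the Dirichlet problems on an exhausting sequence of regular subdomains $\omega_n$ with boundary data $|u|_\R$ on $\partial\omega_n$; the bound $|u|_\mathcal{H}\leq\|u\|_\infty$ is then obtained by applying the maximum principle to each $h_n$ and passing to the limit. You instead exploit directly that $|u|_\mathcal{H}$ is by definition the \emph{least} upper bound of $\{u,-u\}$ in $\mathcal{H}_b(\Omega,\R)$ and that the constant function $\|u\|_\infty$ is a competing harmonic upper bound, which yields the sandwich $|u(\xi)|\leq|u|_\mathcal{H}(\xi)\leq\|u\|_\infty$ at once; this uses only the existence of lattice suprema (Lemma \ref{interpolation property lemma} and the discussion opening the appendix) and none of the constructive machinery. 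What the paper's longer route buys is the explicit description of the lattice operations as limits of Dirichlet solutions, which is of independent interest; your route buys brevity and avoids the exhaustion argument entirely. You also make explicit the norm-completeness of $\mathcal{H}_b(\Omega,\R)$, i.e.\ its closedness in $C_b(\Omega,\R)$ via Theorem \ref{Arendt}(\ref{Arendt nets}), a point the paper's proof leaves tacit.
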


For the proof we will need

\begin{lemma}\label{harmonicsupremumconstruction}
Let $u,v\in\mathcal{H}_b(\Omega,\R)$. Choose $\omega_n\ssubset\omega_{n+1}\ssubset\Omega$ such that $\bigcup_n\omega_n=\Omega$ and such that $\omega_n$ is regular for each $n\in\N$. For all $\xi\in\partial\omega_n$ set
\begin{align*}
f_n(\xi):=u(\xi)\vee_\R v(\xi),
\end{align*}
where $u(\xi)\vee_\R v(\xi)$ denotes the supremum in $\R$. Let $h_n$ be the solution to the Dirichlet problem in $\omega_n$ with boundary data $f_n$. Then $h_n\rightarrow u\vee_\mathcal{H}v$ uniformly on compact sets.
\end{lemma}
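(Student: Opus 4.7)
My plan is to show that $(h_n)$ is pointwise increasing, uniformly bounded, and dominated by every bounded harmonic majorant of $u$ and $v$; I will then use Theorem~\ref{Arendt}(\ref{Arendt nets}) to upgrade pointwise convergence to uniform convergence on compact subsets and identify the limit with $u\vee_\mathcal{H}v$.

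First I would verify that each $h_n$ is well defined: since $u,v$ are harmonic on $\Omega\supset\overline{\omega_n}$, the boundary data $f_n=u\vee_\R v$ is continuous on $\partial\omega_n$, and regularity of $\omega_n$ furnishes the classical solution $h_n\in\mathcal{H}(\omega_n,\R)\cap C(\overline{\omega_n},\R)$. Applying the Maximum Principle (Proposition~\ref{weakmax}) to the harmonic functions $h_n-u$ and $h_n-v$, both of which are non-negative on $\partial\omega_n$, yields $h_n\geq u$ and $h_n\geq v$ throughout $\overline{\omega_n}$.

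Next, for monotonicity, applying the previous step with index $n+1$ gives $h_{n+1}\geq u\vee_\R v = f_n$ on $\partial\omega_n\subset\omega_{n+1}$, so the Maximum Principle applied to $h_n-h_{n+1}$ in $\omega_n$ yields $h_n\leq h_{n+1}$. Substituting any bounded harmonic majorant $H\in\mathcal{H}_b(\Omega,\R)$ of $u$ and $v$ in place of $h_{n+1}$ in the very same argument gives $h_n\leq H$ on $\overline{\omega_n}$; specialising to $H=u\vee_\mathcal{H}v$ shows $h_n\leq u\vee_\mathcal{H}v$. Combined with the lower bound $h_n\geq u\geq -\|u\|_\infty$, this makes $(h_n)$ uniformly bounded, say by $C:=\max(\|u\|_\infty,\|v\|_\infty)$.

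Finally, I would fix $m\in\N$ and apply Theorem~\ref{Arendt}(\ref{Arendt nets}) with $\Omega$ replaced by $\omega_m$: the tail $(h_n)_{n\geq m}$ restricted to $\omega_m$ is uniformly bounded and pointwise convergent by monotone convergence, hence converges uniformly on compact subsets of $\omega_m$ to a harmonic function. As $m$ varies these limits patch into a single function $h\in\mathcal{H}_b(\Omega,\R)$ with $\|h\|_\infty\leq C$, and every compact $K\subset\Omega$ lies in some $\omega_m$, whence $h_n\to h$ uniformly on $K$. Passing to the pointwise limit in $h_n\geq u,v$ yields $h\geq u,v$, so $h\geq u\vee_\mathcal{H}v$; combined with $h\leq u\vee_\mathcal{H}v$ from the previous step, this gives $h=u\vee_\mathcal{H}v$. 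The only subtlety I anticipate is that $h_n$ lives only on $\omega_n$ while Theorem~\ref{Arendt}(\ref{Arendt nets}) is stated on a fixed domain; this is handled cleanly by applying the theorem to each $\omega_m$ separately and patching the resulting limits.
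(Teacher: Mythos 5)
Your proof is correct, and it takes a somewhat different route from the paper's. The paper establishes the sandwich $u\vee_\R v\leq h_n\leq u\vee_\mathcal{H}v$ on $\omega_n$ (using that $u\vee_\R v$ is subharmonic), then derives equicontinuity from the Poisson integral formula and runs Arzela--Ascoli plus a diagonal and subsequence argument to extract the limit, identifying it via the same sandwich. You instead observe that $(h_n)$ is pointwise \emph{increasing} -- since $h_{n+1}\geq u\vee_\R v=h_n$ on $\partial\omega_n$ -- so that pointwise convergence is immediate from monotone boundedness, and then invoke Theorem~\ref{Arendt}(\ref{Arendt nets}) on each $\omega_m$ to upgrade to locally uniform convergence of a harmonic limit; your separate maximum-principle applications to $h_n-u$ and $h_n-v$ are equivalent to the paper's subharmonicity observation and yield the same two-sided bound used for the identification $h=u\vee_\mathcal{H}v$. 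The monotonicity observation is a genuine simplification: it removes the compactness extraction and the subsequence bookkeeping entirely and gives convergence of the full sequence directly. What the paper's compactness route buys is uniformity of method -- it is the same Arzela--Ascoli argument used for Wiener's construction earlier in the paper and does not rely on an order structure, though since the lemma is stated only for $X=\R$ that generality is not needed here. One point worth making explicit in your write-up: the identification $h\geq u\vee_\mathcal{H}v$ uses that the order on $\mathcal{H}_b(\Omega,\R)$ is the pointwise one inherited from $C(\Omega,\R)$, so that a bounded harmonic pointwise majorant of $u$ and $v$ is an upper bound in the lattice $\mathcal{H}_b(\Omega,\R)$; this is exactly how the paper sets things up in the appendix, so the step is legitimate.
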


\begin{proof}
The fact that $\mathcal{H}_b(\Omega,\R)$ is an order complete vector lattice was discussed in the introduction of the appendix. It remains to show that $h_n$ converges to $u\vee_\mathcal{H}v$. First let $K\subset\Omega$ be compact. We may without loss of generality assume that $K\subset\omega_1$. For $n\in\N$ and $\xi\in\partial\omega_n$ we have
\begin{align*}
h_n(\xi)=u(\xi)\vee_\R v(\xi)\leq\left(u\vee_\mathcal{H}v\right)(\xi).
\end{align*}
Note that the functions on the left hand side and on the right hand side are harmonic while the one in the middle is subharmonic. The maximum principle implies that
\begin{align*}
u(\xi)\vee_\R v(\xi)\leq h_n(\xi)\leq\left(u\vee_\mathcal{H}v\right)(\xi)
\end{align*}
for all $\xi\in\omega_n$. In particular this holds for $\xi\in K$. The Poisson integral formula implies that $h_n$ is equicontinuous, hence the Arzela-Ascoli Theorem yields a subsequence of $h_n$ which converges uniformly on $K$ to a function $h\in\mathcal{H}_b(K^\circ,\R)$.\\
Next take $K_n:=\overline{\omega_n}$ then using a diagonal argument we find a subsequence of $h_n$ which converges to $h\in\mathcal{H}_b(\Omega,\R)$ uniformly on every $K_n$. The above inequality shows that $h=u\vee_\mathcal{H}v$. A subsequence argument shows that $h_n\rightarrow u\vee_\mathcal{H}v$.
\end{proof}

One can easily use Lemma \ref{harmonicsupremumconstruction} to show that this construction works analogously for other lattice operations:

\begin{corollary}\label{harmonicsupremumconstructioncorollary}
In the formulation of Lemma \ref{harmonicsupremumconstruction} choose
\begin{align*}
f_n(\xi):=|u(\xi)|_\R,
\end{align*}
where $|u(\xi)|_\R$ denotes the absolute value in $\R$. Then $h_n\rightarrow |u|_\mathcal{H}$ uniformly on compact sets, where $|u|_\mathcal{H}$ denotes the absolute value in the lattice $\mathcal{H}_b(\Omega,\R)$.
\end{corollary}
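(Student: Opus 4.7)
The plan is to reduce this corollary directly to Lemma \ref{harmonicsupremumconstruction} by exploiting the vector lattice identity $|w| = w \vee (-w)$, which holds both in $\R$ and in the abstract vector lattice $\mathcal{H}_b(\Omega,\R)$. Since $u \in \mathcal{H}_b(\Omega,\R)$ implies $-u \in \mathcal{H}_b(\Omega,\R)$, I would apply the previous lemma to the pair $(u,-u)$.

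Concretely, first I would note that for each $\xi \in \partial\omega_n$ the pointwise absolute value satisfies
\begin{align*}
|u(\xi)|_\R = u(\xi) \vee_\R (-u(\xi)),
\end{align*}
so the boundary datum $f_n$ in the statement of the corollary coincides exactly with the boundary datum produced by Lemma \ref{harmonicsupremumconstruction} when applied to the two harmonic functions $u$ and $-u$. Consequently, the solution $h_n$ of the Dirichlet problem on $\omega_n$ with boundary data $f_n$ is precisely the sequence manufactured by the lemma for this pair.

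Next, I would invoke the conclusion of Lemma \ref{harmonicsupremumconstruction}: the sequence $h_n$ converges uniformly on compact sets to $u \vee_\mathcal{H} (-u)$. Finally, since $\mathcal{H}_b(\Omega,\R)$ is a vector lattice (as observed at the beginning of the appendix), the defining identity of the absolute value in a vector lattice gives
\begin{align*}
u \vee_\mathcal{H} (-u) = |u|_\mathcal{H},
\end{align*}
which yields the claim. There is essentially no obstacle here: the corollary is purely a dictionary translation, and the only thing to verify carefully is that the pointwise and lattice formulas $|\cdot| = (\cdot) \vee (-\cdot)$ are compatible in the sense used above, which is immediate from the definitions. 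If one wanted an even more symmetric statement (e.g.\ $u \wedge_\mathcal{H} v$), the same reduction would apply using $u \wedge v = -((-u) \vee (-v))$.
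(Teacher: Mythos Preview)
Your proposal is correct and is precisely the argument the paper has in mind: the text merely remarks that Corollary \ref{harmonicsupremumconstructioncorollary} follows by applying Lemma \ref{harmonicsupremumconstruction} to the appropriate lattice operation, and your reduction via $|u|=u\vee(-u)$ (pointwise and in $\mathcal{H}_b$) is exactly that.
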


\begin{proof}[Proof of Theorem \ref{harmonicfunctionsareabanachlattice}]
It remains to show that $\|\,|u|_\mathcal{H}\,\|_\infty=\|u\|_\infty$. The estimate \glqq$\geq$\grqq\ is immediate. On the other hand if $h_n$ is chosen as in Corollary \ref{harmonicsupremumconstructioncorollary}, then the maximum principle implies that
\begin{align*}
0\leq h_n(\xi)\leq\max_{\xi\in\partial\omega_n}|u(\xi)|_\R\leq\|u\|_\infty.
\end{align*}
Since $h_n(\xi)$ converges to $|u(\xi)|_\mathcal{H}$ the estimate \glqq$\leq$\grqq\ follows.
\end{proof}
\end{appendix}

\def\cprime{$'$}
\providecommand{\bysame}{\leavevmode\hbox to3em{\hrulefill}\thinspace}
\providecommand{\MR}{\relax\ifhmode\unskip\space\fi MR }
\providecommand{\MRhref}[2]{%
  \href{http://www.ams.org/mathscinet-getitem?mr=#1}{#2}
}
\providecommand{\href}[2]{#2}

\end{document}